%
%
%


\documentclass{mcom-l}




\usepackage{float}
\usepackage{graphicx}
\usepackage{algorithm}
\usepackage{algpseudocode}%
\usepackage{amsmath,amssymb,amsfonts}
\usepackage[hidelinks]{hyperref} 
\usepackage{cleveref} 
\usepackage{xcolor}
\usepackage{booktabs}
\usepackage{mathtools}

\newtheorem{theorem}{Theorem}[section]
\newtheorem{lemma}[theorem]{Lemma}
\newtheorem{proposition}{Proposition}[section]
\theoremstyle{definition}
\newtheorem{definition}[theorem]{Definition}

\newtheorem{corollary}{Corollary}[section]
\theoremstyle{remark}
\newtheorem{remark}[theorem]{Remark}
\numberwithin{equation}{section}

\def\d{\displaystyle}

\def\Diag{\operatorname{Diag}}

\def\LHSC{\operatorname{LHSCB}}
\def\cond{\operatorname{Cond}}
\def\interior{\operatorname{int}}
\def\tr{\operatorname{tr}}
\def\det{\operatorname{det}}
\def\P{\mathcal{P}}
\def\K{\mathbb{K}}
\def\J{\mathbb{J}}
\def\V{\mathbb{V}}
\def\E{\mathbb{E}}
\def\A{\mathcal{A}}

\def\argmin{\operatorname{argmin}}

\def\nal{Newton augmented Lagrangian }
\def\NAL{NAL method }
\def\NALM{NAL method}
\def\NALMM{NAL}
\def\L{\mathcal{L}}

\def\varnewphi{\phi}

\begin{document}

\title[An NAL Method for SCP with Complexity Analysis]{A Newton Augmented Lagrangian Method for Symmetric Cone Programming with Complexity Analysis}


\author{Rui-Jin Zhang}
\address{School of Mathematical Sciences and LPMC, Nankai University, Tianjin, 300071, China}
\email{zhangrj@nankai.edu.cn}
\thanks{The first author was supported by the National Natural Science Foundation of China (No. 1250012017), the Natural Science Foundation of Tianjin, China (No. 24JCQNJC01970), and the Fundamental Research Funds for the Central Universities (No. 050-63253088). The third  author was supported by the National Natural Science Foundation of China (Nos. 12071108 and 12471286). The fourth author was supported by the National Natural Science Foundation of China (Nos. 12021001, 11991021).}

\author{Ruoyu Diao}
\address{State Key Laboratory of Mathematical Sciences, Academy of Mathematics and Systems Science, Chinese Academy of Sciences, Beijing 100190, China, and School of Mathematical Sciences, University of Chinese Academy of Sciences, Beijing 100049, China}
\email{diaoruoyu18@mails.ucas.ac.cn}

\author{Xin-Wei Liu}
\address{Institute of Mathematics, Hebei University of Technology, Tianjin, 300401, China }
\email{mathlxw@hebut.edu.cn}

\author{Yu-Hong Dai}
\address{State Key Laboratory of Mathematical Sciences, Academy of Mathematics and Systems Science, Chinese Academy of Sciences, Beijing 100190, China, and School of Mathematical Sciences, University of Chinese Academy of Sciences, Beijing 100049, China}
\email{dyh@lsec.cc.ac.cn}

\subjclass[2020]{Primary 49M15,90C05,90C22,90C25}

\date{}

\dedicatory{}
\keywords{symmetric cone, augmented Lagrangian method, Newton method, complexity bound}
\begin{abstract}
Symmetric cone programming covers a broad class of convex optimization problems, including linear programming, second-order cone programming, and semidefinite programming. Although the augmented Lagrangian method (ALM) is well-suited for large-scale problems, its subproblems are often not twice continuously differentiable, preventing the direct use of classical Newton methods. To address this issue, we observe that barrier functions used in interior-point methods (IPMs) naturally serve as effective smoothing terms to alleviate such nonsmoothness. By combining the strengths of ALM and IPMs, we construct a novel augmented Lagrangian function and subsequently develop a Newton augmented Lagrangian (NAL) method. By leveraging the self-concordance property of the barrier function, the proposed method is shown to achieve an $\mathcal{O}(1/{\epsilon})$ complexity bound.  In addition, a spectral analysis reveals that the condition numbers of the Schur complement matrices arising in the NAL method are of order $\mathcal{O}(1/{\mu})$, which is better than the $\mathcal{O}(1/{\mu^2})$ order of classical IPMs. This improvement is further illustrated by a heatmap of condition numbers. Numerical experiments conducted on standard benchmarks indicate that the NAL method exhibits significant performance improvements compared to several existing methods.
\end{abstract}

\maketitle


\section{Introduction}\label{sec0}
Symmetric cone programming (SCP) is a class of convex optimization problems with affine constraints and symmetric cone constraints on the decision variable. The standard form of SCP problems can be described by the following primal-dual pair:
\begin{equation}\label{cone problem}
	\begin{array}{crllcl}
		&\min & \langle{c}, {x}\rangle & & \min &  -\langle{b}, \lambda \rangle \\
		(\operatorname{P}) &\quad \text { s.t.} & \mathcal{A} {x}={b}, & \quad\quad (\operatorname{D}) \quad & \text { s.t.} & \mathcal{A}^{*} \lambda+{s}={c}, \\
		&& {x} \in \K, & & & {s} \in \K^*,
	\end{array}
\end{equation}
where $\mathcal{A}$ is a linear operator, $\mathcal{A}^{*}$ is its adjoint, and $b$ and $c$ are given data. Throughout this paper, we assume that $\mathcal{A}$ is surjective and that the Slater condition holds for both the primal problem ($\operatorname{P}$) and the dual problem ($\operatorname{D}$).  The set $\K$ is a symmetric cone, defined as a closed, convex cone that is self-dual ($\K=\K^*$) and homogeneous. Notably, any symmetric cone is the cone of squares $\left\{x\circ x \mid x\in \J\right\}$ in some Euclidean Jordan algebra $\E:=(\J, \circ, \langle\cdot,\cdot\rangle)$. In this construction, $\J$ is a finite-dimensional space  equipped with a Jordan product $ \circ: \J \times \J \to \J$ and an inner product $\langle\cdot,\cdot\rangle$ satisfying the following properties for all elements $x,\,y,\,z\in\J$:
\begin{align*}
     x \circ y = y \circ x,\quad
     x^2 \circ (x \circ y) = x \circ (x^2 \circ y),\quad
     \langle x\circ y,z \rangle = \langle y,x\circ z \rangle,
\end{align*}
where $x^2=x\circ x$. Additionally, $\J$ contains an identity element $e$ defined by the property that $x \circ e = x$ for all $x \in \J$. For more details about the  Euclidean Jordan algebra, the reader is referred to \cite{faraut1994analysis,vieira2007jordan}. Symmetric cones encompass several well-known special cases, including the nonnegative orthant for linear programming (LP), the second-order cone for second-order cone programming (SOCP), and the positive semidefinite cone for semidefinite programming (SDP). These cones have found extensive applications in various fields, such as wireless communications \cite{liu2024survey}, control systems \cite{rao1998application}, and machine learning \cite{khemchandani2010learning}, where the development of efficient and reliable optimization methods is essential.

Interior-point methods (IPMs) are among the most efficient methods for SCP, and several widely used software packages include SeDuMi \cite{sturm1999using}, Clarabel \cite{goulart2024clarabel}, and Hypatia \cite{coey2023performance}. For further background on IPMs, see, e.g., \cite{byrd1999interior,chiu2024well,nesterov1994interior,wright1997primal} and the references therein. However, for large-scale SCP problems, the computational cost of forming and factorizing Schur complement matrices (SCMs) in IPMs can become prohibitively expensive. Even when iterative methods are employed to avoid explicitly forming the SCMs, their condition numbers can worsen significantly as the iterates approach the optimal solution, leading to computational challenges that slow down the convergence.

Compared to IPMs, the augmented Lagrangian method (ALM) offers significant computational advantages in addressing the numerical issue mentioned above. For instance, an ALM based on the semismooth Newton method was proposed in \cite{zhao2010newton}, leveraging the sparsity of both the problem and solution to address the SCMs. This method, implemented in SDPNAL$+$ \cite{yang2015sdpnal+}, was particularly effective for large-scale SDP problems and demonstrated outstanding performance in benchmark tests. Extensions to LP and SOCP were discussed in \cite{li2020asymptotically,liang2021inexact}. In recent years, the ALM has also found applications in other problems. We refer to \cite{burer2003nonlinear,chen2017augmented,dai2023augmented,han2014augmented,liang2022qppal,wang2025augmented,yang2013linearized,zhang2022solving} and the references therein for details. All these remarkable works highlight the computational efficiency and potential of the ALM.

Beyond numerical achievements, several studies have established iteration complexity results for the ALM in constrained convex programming. For example, Lan \cite{lan2009convex} provided a detailed complexity analysis for an inexact ALM scheme in which each subproblem was solved by Nesterov’s accelerated gradient method with a fixed penalty parameter. Subsequently, with a specific set of subproblem termination accuracy parameters, Lan and Monteiro \cite{lan2016iteration} derived an $\mathcal{O}(1/(\epsilon_p \epsilon_d^{\frac{3}{4}}))$ complexity bound to obtain an $(\epsilon_p,\epsilon_d)$-KKT solution. Xu \cite{xu2021iteration} established an $\mathcal{O}(1/\epsilon)$ complexity bound to obtain a primal $\epsilon$-solution for general convex optimization problems with equality and inequality constraints. For convex conic programming problems, Lu and Zhou \cite{lu2023iteration} derived an $\mathcal{O}(1/\epsilon^{\frac{7}{4}})$ complexity bound to find an $\epsilon$-KKT solution. 

Despite these achievements, two critical numerical issues arise when applying the ALM to solve SCP problems. First, the augmented Lagrangian function is not twice continuously differentiable, which prevents the use of classical Newton methods for high-precision subproblem solutions. This limitation becomes particularly significant in later iterations, as Rockafellar’s analysis \cite{rockafellar1976augmented,rockafellar1976monotone} demonstrates that solving the subproblems with high precision is required for convergence. Second, selecting appropriate stopping criteria for the subproblems involves a trade-off between computational expense and convergence guarantees. Specifically, excessive precision in early iterations can significantly increase the computational cost, while insufficient precision in later stages may lead to slow convergence or even cause the algorithm to fail. Notably, practical implementation \cite{yang2015sdpnal+} adopts stopping criteria based on Rockafellar's framework. These criteria require determining the minimum of the augmented Lagrangian function, which, however, is unknown in practice.

Motivated by advancements in both the ALM and IPMs, recent developments have combined these two approaches to achieve significant algorithmic improvements. For instance, Wang et al. \cite{wang2010solving} enhanced the classical ALM by incorporating a logarithmic barrier term with a fixed parameter, enabling the resulting subproblems to be efficiently solved via the Newton-CG method. The ADMM-based interior-point method (ABIP) \cite{deng2024enhanced,lin2021admm}, which maintains a fixed coefficient matrix in each iteration, efficiently reduces computational overhead, making it particularly suitable for large-scale problems. Furthermore, recently developed algorithms \cite{liu2020globally,liu2020primal,liu2023novel,zhang2023iprqp,zhang2024iprsdp,zhang2024iprsocp} based on a novel augmented Lagrangian function have exhibited superior numerical behavior, benefiting from the favorable properties of their SCMs compared to classical IPMs. However, despite their promising numerical results, the iteration complexity of these methods \cite{liu2020globally,liu2020primal,liu2023novel,zhang2023iprqp,zhang2024iprsdp,zhang2024iprsocp} has not been fully analyzed. Therefore, further exploration of this novel augmented Lagrangian function is essential to develop more efficient algorithms and strengthen their theoretical foundations.

The main contributions of the Newton augmented Lagrangian (\NALMM) method proposed in this paper can be categorized into algorithmic, theoretical, and numerical aspects:
\begin{itemize}
    \item Algorithmically, the \NAL utilizes a novel augmented Lagrangian function that incorporates a barrier term, similar to the approach described in \cite{wang2010solving}, but with a dynamically updated barrier parameter instead of a fixed one. This dynamic updating improves numerical stability and enables efficient solutions to subproblems through classical Newton methods. Furthermore, the novel augmented Lagrangian function is easier to estimate and control, both theoretically and numerically (see Remark~\ref{inexact-lambda}), compared to traditional formulations.
    \item Theoretically, this paper introduces a novel monotone operator derived from the logarithmic barrier function, which is employed to  prove the global convergence of the \NALM. Additionally, the integration of self-concordance theory from IPMs leads to an easy-to-implement stopping criterion and establishes an $\mathcal{O}(1/{\epsilon})$ complexity bound. In addition, we prove that the SCMs arising in the \NAL possess condition numbers of order $\mathcal{O}(1/{\mu})$, which is better than the $\mathcal{O}(1/{\mu^2})$ order of classical IPMs.
     \item Numerically, the improved conditioning predicted by our theory is confirmed by a heatmap of the condition numbers of the SCMs, clearly demonstrating the advantage of the \NAL over classical IPMs. Additional experiments on standard benchmark problems further show that the \NAL achieves significant performance improvements compared to several existing methods for SCP problems.
\end{itemize}

The rest of this paper is structured as follows. Section~\ref{sec1} introduces the fundamental results of Euclidean Jordan algebra, self-concordance theory, and monotone operators, which are necessary for understanding the subsequent developments. In Section~\ref{sec2}, we propose a novel augmented Lagrangian function and analyze its relevant properties, particularly its self-concordance. Section~\ref{sec3} presents details on the \NAL for solving SCP problems, including convergence analysis and computational procedures. The $\mathcal{O}(1/\epsilon)$ complexity bound of this method and the $\mathcal{O}(1/\mu)$ order of the condition numbers of the SCMs are established in Section~\ref{sec4}, providing theoretical guarantees. Section~\ref{sec5} presents a heatmap and benchmark results highlighting the superior efficiency of the \NAL compared to other solvers. The last section summarizes the main conclusions of this paper. The appendices provide detailed proofs of the theoretical results and extensive numerical details.




\textbf{Notation.} Throughout this paper, we use the following notations.  Let $x^{(k)}$ be the value of $x$  at the $k$-th outer iteration and $x^{(k,j)}$ be the value of $x$ at the $j$-th inner iteration within the $k$-th outer iteration. For a vector $x$, $\text{Diag}(x)$ denotes the diagonal matrix with entries of $x$ on its diagonal. The interior of a cone $\K$  is denoted by $\text{int}\,(\K)$. We use $\nabla_x$ and $\nabla_x^2$ for the gradient and Hessian operators with respect to $x$, respectively, while  $D_x$ denotes the derivative operator. Derivatives with respect to a scalar variable $\mu$ are marked by $ \{\cdot\}'_{\mu}$ or $'$. Following Nesterov's notation \cite{nesterov2018lectures}, the second and third directional derivatives of a function $\phi$ in the directions of $h_1$, $h_2$, and $h_3$ are expressed as $D^2 \phi[h_1, h_2]$ and $D^3 \phi[h_1, h_2, h_3]$, respectively. For any $x\in\J$, let $\{\lambda_i(x)\}$ denote the eigenvalues of $x$ arising from its spectral decomposition in the underlying Euclidean Jordan algebra. In particular, $\lambda_{\max}(x):=\max_i\{\lambda_i(x)\}$ and $\lambda_{\min}(x):=\min_i\{\lambda_i(x)\}$ refer to the largest and smallest eigenvalues of $x$, respectively. Additional notations and symbols are introduced as needed throughout the paper.

\section{Preliminaries}\label{sec1}
This section introduces some fundamental results, including Euclidean Jordan algebra, self-concordance, and monotone operators, which are used in the subsequent sections.

\subsection{Euclidean Jordan algebra}
In any Euclidean Jordan algebra $\E$, each element $x \in \J$ admits a spectral decomposition. That is, there exist orthogonal idempotents $v_1, v_2, \dots, v_r$ and corresponding eigenvalues $\lambda_1(x), \lambda_2(x), \dots, \lambda_r(x)$ such that:
\begin{equation}\label{Equation x}
x = \sum_{i=1}^r \lambda_i(x) v_i,
\end{equation}
where $r$ is the rank of the Euclidean Jordan algebra $\E$.
The idempotents $ v_i,\,i=1,\dots,r$, satisfy the following properties:
\begin{equation}\label{spectral decomposition}
\sum_{i=1}^r v_i = e, \quad v_i \circ v_j = 0 \quad \text{for}\ \text{all}\  i \neq j, \quad \text{and} \quad v_i\circ v_i = v_i\quad \text{for all}\ i.
\end{equation}
 A set $\{ v_1,v_2,...,v_r \}$ with these properties is called a Jordan frame. For any $x \in \J$, the corresponding eigenvalues $\lambda_{i}(x)$, $i = 1, \ldots, r$, characterize the spectral structure of $x$. An element $x$ belongs to $\K\, (\interior\,(\K))$ if and only if all its eigenvalues are nonnegative (strictly positive) \cite[Proposition 2.5.10]{vieira2007jordan}. Furthermore, both the trace and the determinant of $x$ are determined by its eigenvalues, namely,
\begin{equation}
\tr(x):=\sum_{i=1}^r \lambda_i(x),\quad \det(x):=\displaystyle\prod_{i=1}^r \lambda_i(x).
\end{equation}



The spectral decomposition also provides a straightforward approach to generalizing real-valued continuous functions to  the algebraic framework of symmetric cones:
\begin{itemize}
    \item The square of $x$ is given by:
$
x^{2} := \displaystyle\sum_{i=1}^r \lambda_i(x)^{2} v_i.
$ \footnotemark[1]

    \item The inverse of $x$ is given by:
    $
    x^{-1} := \displaystyle\sum_{i=1}^r \lambda_i(x)^{-1} v_i,
    $ provided that all $\lambda_i(x) \neq 0 $; otherwise, it is undefined.

    \item The square root of $x$ is given by:
    $
    x^{1/2} :=  \displaystyle\sum_{i=1}^r  \lambda_i(x)^{1/2} v_i,
    $ provided that all $\lambda_i(x) \geq 0 $; otherwise, it is undefined.
\end{itemize}
\footnotetext[1]{{With the spectral decomposition~\eqref{Equation x} and the relations~\eqref{spectral decomposition}, we obtain $x\circ x=\sum_{i=1}^{r}\lambda_{i}(x)^{2}v_{i}$.  Hence the present expression for $x^{2}$ is identical to the earlier definition $x^{2}=x\circ x$.
}}

Noting that the Jordan product in a Euclidean Jordan algebra is bilinear, for any element 
$x$, we can associate a Lyapunov operator $\L(x): \J \to \J$ defined by $\L(x)y := x \circ y,\ \forall\, y \in \J.$ The quadratic representation $\P(x): \J \to \J$ is then defined by $\P(x):= 2\L(x)^2 - \L(x^2)$. 

In the following, we present several important properties of the operators $\L(x)$ and $\P(x)$, which are also available in \cite[Proposition 2.3.4, Proposition 2.9.7, Theorem 2.9.8, Proposition 9.2.11]{vieira2007jordan}.

\begin{proposition}\label{Prop:property-of-p-and-l}
If $x\in \J$ is invertible, then the following identities hold: 
\[\P(x^{-1})=(\P(x))^{-1}\quad \text{and} \quad \P(x)\L(x^{-1})=\L(x).\]
\end{proposition}

\begin{proposition}
Given a Jordan frame $\{v_1,v_2,...,v_r\}$, define the operators 
$$
    \begin{array}{lll}
      & \P_{ii} = \P(v_i), & i=1,...,r,\\
      & \P_{ij} = 4\L(v_i)\L(v_j), &  i,j=1,...,r,\ i\neq j.
    \end{array}
$$
With these operators, the space $\J$ admits the orthogonal direct-sum decomposition
$$
    \J = \bigoplus\limits_{1\le i\le j \le r} \V_{ij},
$$
where each $\P_{ij}$ is the orthogonal projection onto the subspace $\V_{ij}$, and they are orthogonal to each other.
\end{proposition}


\begin{proposition}\label{Prop:eig value of Lyapunov}
Let $x \in \J$ have Jordan eigenvalues $\lambda_1(x), \dots, \lambda_r(x)$ with respect to a Jordan frame $\{v_i\}_{i=1}^r$. Then the Lyapunov operator $\L(x)$ and quadratic representation $\P(x)$ admit the following spectral decompositions: 
$$
\begin{array}{ll}
     &      \L (x) = \sum\limits_{i=1}^r \lambda_i(x) \P_{ii} + \sum\limits_{i<j} \frac{\lambda_i(x) + \lambda_j(x)}{2} \P_{ij},\\
     &     \P (x) = \sum\limits_{i=1}^r \lambda_i(x)^2 \P_{ii} + \sum\limits_{i<j} \lambda_i(x) \lambda_j(x)\P_{ij}.
\end{array}
$$
\end{proposition}

Proposition~\ref{Prop:eig value of Lyapunov} immediately yields the following corollary.
\begin{corollary}\label{Cor:invertible}
    If $x\in \interior\,(\K)$, then both $\L(x)$ and $\P(x)$ are invertible.
\end{corollary}

\subsection{Self-concordance}
Self-concordant functions are an important class of functions in convex optimization and play a central role in deriving iteration complexity bounds in certain optimization algorithms. 



\begin{definition}
	Let $\Omega$ be a non-empty open convex domain of $\J$.
	A convex function $\phi(x)$ is said to be $\alpha$-self-concordant if $\phi(x)$ is three times continuously differentiable on $\Omega$, and
	\begin{equation}
		\left|D^3\phi(x)[h,h,h]\right|\leq2\alpha^{-\frac{1}{2}}\left(D^2\phi(x)[h,h]\right)^{\frac{3}{2}}
	\end{equation}
    for all $x\in \Omega$ and all $h\in \J$. If $\alpha=1$, the function $\phi$ is called standard self-concordant.
\end{definition}
Building upon the concept of self-concordance, Nesterov and Nemirovski \cite{nesterov1994interior} introduced the family of strongly self-concordant functions, which exhibit additional properties.
\begin{definition}\label{def:self-concordant family}
	The family $\{\phi(x,\mu)\mid \mu>0\}$ is called strongly self-concordant on a non-empty open convex domain $\Omega$ of $\J$ with the parameter functions $\alpha(\mu)$, $\beta(\mu)$, $\gamma(\mu)$, $\xi(\mu)$, and $\zeta(\mu)$, where $\alpha(\mu),\,\beta(\mu),$ and $\gamma(\mu)$ are continuously
	differentiable, if the following properties hold:
	\begin{enumerate}
		\item[(i)] Convexity and differentiability: $\phi(x,\mu)$ is convex in $x$, continuous in $(x,\mu)\in \Omega\times\mathbb{R}_{++}$, three times continuously differentiable in $x$, and twice continuously differentiable in $\mu$.
		\item[(ii)] Self-concordance of members: $\forall \mu>0$, $\phi(x,\mu)$ is $\alpha(\mu)$-self-concordant
		on $\Omega$.
		\item[(iii)] Compatibility of neighbours: For every $(x,\mu) \in \Omega\times\mathbb{R}_{++}$, $h \in \J$, $\nabla_x\phi(x,\mu)$ and $\nabla_x^2\phi(x,\mu)$  are continuously differentiable in $\mu$, and
		\begin{subequations}
			\begin{align*}
				&| \langle h, \nabla_x\phi^\prime (x,\mu) \rangle- \{{\ln}\beta(\mu)\}_{\mu}^{\prime}\langle h,\nabla_{x}\phi(x,\mu)\rangle| 	\leq\xi(\mu)(\alpha(\mu) D_{x}^{2}\phi(x,\mu)[h,h])^{\frac{1}{2}}, \\
				&| D_x^2\phi'(x,\mu)[h,h]- \{{\ln}\gamma(\mu)\}_{\mu}^{\prime}  D_{x}^{2}\phi(x,\mu)[h,h]|\leq2\zeta(\mu)D_{x}^{2}\phi(x,\mu)[h,h]. 
			\end{align*}
		\end{subequations}
	\end{enumerate}
\end{definition}

 In addition, $\phi$ is a barrier of $\Omega$ if $\phi(x)\rightarrow\infty$ along every sequence of points $x\in \Omega$ converging to a boundary point of $\Omega$. When $\Omega=\interior\,(\K)$ for a closed convex cone $\K$, a barrier $\phi(x)$ is called a $\nu$-logarithmically homogeneous self-concordant barrier ($\nu$-$\LHSC$) if it is standard self-concordant and satisfies
    \begin{equation}\label{log homogeneous}
	\phi(\tau x)= \phi(x)-\nu\ln(\tau),\quad \forall\, x\in \interior\,(\K),\;\forall\, \tau>0
    \end{equation} 
    for some parameter $\nu>0$.
Some straightforward consequences of \eqref{log homogeneous} include
\begin{subequations}
	\begin{align}
		\nabla\phi(\tau x)& =\frac{1}{\tau}\nabla\phi(x),\quad \nabla^2\phi(\tau x)=\frac{1}{\tau^2}\nabla^2\phi(x), \\
		\nabla^2\phi(x)x& =-\nabla\phi(x),\quad 
		\langle \nabla\phi(x),x\rangle  =-\nu. \label{equation 1}
	\end{align}
\end{subequations}

A $\nu$-$\LHSC$ of $\K$ plays a crucial role not only in the convergence analysis of IPMs but also in deriving significant geometric results. The following lemma, as stated in \cite[Proposition 3.3]{chua2013barrier}, proves useful in explaining the proposed method.
\begin{lemma}\label{limsup K}
	For any fixed $\bar{x}\in \K$, let $\phi$ be a $\nu$-$\LHSC$ of $\K$. Then 
$$
    N_{\K} (\bar{x}) = \limsup\limits_{\substack{\mu \downarrow 0 \\ x\in {\rm int}\, (\K) \rightarrow \bar{x}}} \left\{  \nabla \phi (\frac{x}{\mu})\right\}, 
$$
where $N_{\K}(\bar{x})$ denotes the normal cone to $\K$ at $\bar{x}$.
\end{lemma}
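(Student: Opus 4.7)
The plan is to prove the two set inclusions separately, both hinging on the logarithmic homogeneity identity $\nabla\phi(x/\mu)=\mu\nabla\phi(x)$ together with the facts that $-\nabla\phi(x)\in\interior(\K^{*})$ for every $x\in\interior(\K)$ and that $\langle\nabla\phi(x),x\rangle=-\nu$ from \eqref{equation 1}. With log-homogeneity in hand, a point $y$ lies in the right-hand side precisely when there exist sequences $\mu_{k}\downarrow 0$ and $x_{k}\in\interior(\K)$ with $x_{k}\to\bar{x}$ such that $\mu_{k}\nabla\phi(x_{k})\to y$.

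For the inclusion $\limsup\subseteq N_{\K}(\bar{x})$, I would take such a cluster point $y$. Since $-\mu_{k}\nabla\phi(x_{k})\in\K^{*}$ and $\K^{*}$ is closed, $-y\in\K^{*}$; and $\langle\mu_{k}\nabla\phi(x_{k}),x_{k}\rangle=-\mu_{k}\nu\to 0$ yields $\langle y,\bar{x}\rangle=0$. Consequently, for any $z\in\K$ one has $\langle -y,z-\bar{x}\rangle=\langle -y,z\rangle\geq 0$, which is precisely $y\in N_{\K}(\bar{x})$.

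The reverse inclusion $N_{\K}(\bar{x})\subseteq\limsup$ is the main obstacle. Given $y\in N_{\K}(\bar{x})$, my idea is to pass through a barrier-regularized projection problem
\[
 x(\mu)\;:=\;\argmin_{x\in\interior(\K)}\ \left\{-\langle y,x\rangle+\tfrac{1}{2}\|x-\bar{x}\|^{2}+\mu\phi(x)\right\}.
\]
Strong convexity of the quadratic term together with the barrier property of $\phi$ guarantees a unique interior minimizer, whose first-order condition reads $\mu\nabla\phi(x(\mu))=y-(x(\mu)-\bar{x})$. Once $x(\mu)\to\bar{x}$ as $\mu\downarrow 0$ is established, this identity yields $\mu\nabla\phi(x(\mu))\to y$, and picking $x_{k}=x(\mu_{k})$ for any $\mu_{k}\downarrow 0$ exhibits $y$ in the $\limsup$. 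I deliberately insert the quadratic term because a pure central-path choice $c=-y$ would only converge to the analytic center of the optimal face of $-\langle y,\cdot\rangle$ on $\K$, not to the prescribed point $\bar{x}$.

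The delicate step is therefore proving $x(\mu)\to\bar{x}$, which I plan to treat by an epi-convergence/variational argument: the regularized objectives epi-converge to $F_{0}(x)=-\langle y,x\rangle+\tfrac{1}{2}\|x-\bar{x}\|^{2}+\iota_{\K}(x)$. The $\liminf$ inequality follows from $\mu\phi$ being bounded below on compact interior subsets and diverging near $\partial\K$; a recovery sequence at the boundary point $\bar{x}$ can be built as $\bar{x}+\delta(\mu)\,e$ with $\delta(\mu)\downarrow 0$ slowly enough that $\mu\phi(\bar{x}+\delta(\mu)\,e)\to 0$, exploiting the at-most logarithmic boundary growth of a $\nu$-$\LHSC$ that follows from \eqref{log homogeneous}. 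Uniform coercivity from $\tfrac{1}{2}\|\cdot-\bar{x}\|^{2}$ then forces $x(\mu)$ to the unique minimizer of $F_{0}$, namely the Euclidean projection $P_{\K}(\bar{x}+y)$; and by the Moreau characterization of the normal cone this projection equals $\bar{x}$ precisely because $y\in N_{\K}(\bar{x})$.
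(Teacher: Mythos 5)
Your argument is correct, and it is worth noting that the paper does not actually prove this lemma: it is imported verbatim as \cite[Proposition 3.3]{chua2013barrier}, so your proposal supplies a self-contained proof where the paper only gives a citation. Both inclusions check out. For the easy direction, $-\nabla\phi(x)\in\K^*$ follows even without quoting the gradient-bijection theorem, since the paper's own Lemma~\ref{phi-recession} gives $\phi(x+tz)\le\phi(x)$ for $z\in\K$, hence $\langle\nabla\phi(x),z\rangle\le 0$; combined with $\langle \nabla\phi(x/\mu),x/\mu\rangle=-\nu$ this yields $\langle y,\bar{x}\rangle=0$ and $-y\in\K^*$ exactly as you say. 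For the hard direction, your Tikhonov-type regularization with the quadratic anchor at $\bar{x}$ is the right device, and the Moreau identification $P_{\K}(\bar{x}+y)=\bar{x}\iff y\in N_{\K}(\bar{x})$ closes the loop. Two small points deserve tightening. First, "bounded below on compact interior subsets" is not quite enough for the $\liminf$ inequality, since a sequence $x_\mu\to x$ may approach $\partial\K$; the clean statement is that the lsc convex extension of $\phi$ admits an affine minorant, hence is bounded below on every bounded set, giving $\liminf_{\mu}\mu\phi(x_\mu)\ge 0$. Second, the "at most logarithmic boundary growth" of the recovery sequence is most easily justified from the paper's own machinery: $(\phi 0^+)\equiv 0$ on $\K$ gives $\phi(\bar{x}+\delta e)\le\phi(\delta e)=\phi(e)-\nu\ln\delta$, so $\delta(\mu)=\mu$ already works. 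With these bounds in hand, the epi-convergence apparatus can in fact be bypassed entirely by the two-line comparison $F_\mu(x(\mu))\le F_\mu(\bar{x}+\mu e)$ together with $-\langle y,x(\mu)\rangle\ge -\langle y,\bar{x}\rangle$ (valid because $y\in N_{\K}(\bar{x})$ and $x(\mu)\in\K$), which yields $\tfrac12\Vert x(\mu)-\bar{x}\Vert^2\le o(1)$ directly; but the variational route you describe is equally valid.
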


For any symmetric cone $\K$ in a Euclidean Jordan algebra $\J$, there exists a natural barrier function \cite[Section 2.6]{vieira2007jordan} of $\K$, defined by 
\begin{equation}\label{def:natural barrier}
    \phi(x): = -\ln (\det(x)), \quad x\in \interior\,(\K).
\end{equation}
It follows immediately that the natural barrier $\phi(x)$ is infinitely differentiable and strictly convex on $\interior\,(\K)$, and that it is a $\nu$-$\LHSC$, where $\nu$ denotes the rank of the symmetric cone $\K$. In particular, its first- and second-order derivatives admit the explicit forms (see \cite[Section 2.6]{vieira2007jordan} for details)
\begin{equation}\label{eq:derivatives of phi}
    \nabla \phi(x)=-x^{-1},\quad \nabla^2 \phi(x)=\P(x)^{-1}.
\end{equation}
A direct calculation shows that, for the natural barrier $\phi$ on a symmetric cone $\K$, its  Fenchel conjugate defined by 
$\phi^*(s):= \sup \{ -\langle s,x \rangle - \phi(x) \mid x\in {\rm int}\, (\K) \}$ 
satisfies $\phi^*(s)=\phi(s)-\nu$ for all $s\in \interior\,(\K^*)=\interior\,(\K)$. 

 For completeness, we recall the notion of monotone operators, which will be used in the convergence analysis. 
\begin{definition}
    Let $\mathbb{H}$ be a Hilbert space with inner product $\langle \cdot,\cdot \rangle$.
	A set-valued mapping $\mathcal{T}: \mathbb{H}\rightrightarrows \mathbb{H}$ is said to be a monotone operator if 
	$$
		\forall\, (x,y), \ (\tilde{x}, \tilde{y}) \in {\rm gph}\, \mathcal{T}, \quad \langle x - \tilde{x}, y - \tilde{y} \rangle \ge 0,
	$$
	where ${\rm gph}\, \mathcal{T} : = \left\{ (x,y)\in \mathbb{H}^2 \mid  y\in \mathcal{T}(x) \right\}$ denotes the graph of $\mathcal{T}$. A monotone operator is maximal monotone if its graph is not properly contained in the graph of another monotone operator, i.e., for all
	$(x,y)\in \mathbb{H}^2$, if 
	$$
		\inf\limits_{(\tilde{x},\tilde{y})\in {\rm gph}\, \mathcal{T}} \langle x-\tilde{x}, y- \tilde{y} \rangle\ge 0,
	$$
	then $(x,y)\in {\rm gph}\, \mathcal{T}$.
\end{definition}



\section{A novel augmented Lagrangian function}\label{sec2}
In this section, we introduce a novel augmented Lagrangian function and present its relevant properties, which play an important role in the design and complexity analysis of the proposed method. 



For any barrier parameter $\mu>0$, if we employ the natural barrier of $\K$ to enforce the cone constraints in \eqref{cone problem}, the corresponding barrier subproblems are defined as:
\begin{equation}\label{cone problem with barrier}
\qquad\begin{alignedat}{2}
\mathllap{\vcenter{\hbox{$(\operatorname{P_\mu})$}}\quad} &
\begin{aligned}[c]
\min\ & \langle c, x\rangle + \mu\,\varnewphi(\frac{x}{\mu}) - \mu \nu\\
\text{s.t.}\ & \mathcal{A}x=b,
\end{aligned}
\qquad\qquad\qquad
\mathllap{\vcenter{\hbox{$(\operatorname{D_\mu})$}}\quad} &
\begin{aligned}[c]
\min\ & -\langle b, \lambda \rangle + \mu\,\phi(s)\\
\text{s.t.}\ & \mathcal{A}^{*}\lambda + s = c.
\end{aligned}
\end{alignedat}
\end{equation}
The feasible region of  ($\operatorname{P_{\mu}}$) from (\ref{cone problem with barrier}) is denoted by $\mathcal{F}_{\operatorname{P_{\mu}}}$.

 Applying the proximal point method to  ($\operatorname{P_{\mu}}$) then yields the following subproblem
\begin{equation}\label{augmented Lagrangian trans}
\begin{aligned}
    &\ \min\limits_{\tilde{x}} \ \langle c,\tilde{x}\rangle + \mu  \varnewphi (\frac{\tilde{x}}{\mu}) -\nu \mu + \frac{\rho}{2} \Vert \tilde{x} - x \Vert^2 + \mathcal{I}_{\mathcal{F}_{\operatorname{P_{\mu}}}}(\tilde{x})\\
    = &\ - \max\limits_{\tilde{x}} \left\{ \min\limits_{\lambda, s} l(\tilde{x},\lambda,s;\mu) -\frac{\rho}{2} \Vert \tilde{x} - x \Vert^2\right\}\\
    =  &\ -\min\limits_{\lambda,s} \left\{ \max\limits_{\tilde{x}} \left\{ -\langle b, \lambda \rangle +\mu \phi(s) + \langle \tilde{x}, \mathcal{A}^* \lambda +s-c \rangle -\frac{\rho}{2} \Vert \tilde{x}-x\Vert^2  \right\} \right\}\\
    = &\ - \min\limits_{\lambda,s}  \left\{ -\langle b, \lambda \rangle + \mu \phi(s) +  \langle x, \mathcal{A}^* \lambda +s - c \rangle + \frac{1}{2\rho} \Vert \mathcal{A}^* \lambda +s-c \Vert^2 \right\}.
    \end{aligned}
\end{equation}
Here, $l(\tilde{x},\lambda,s;\mu) : = -\langle b, \lambda \rangle +\mu \phi(s)+ \langle \tilde{x}, \mathcal{A}^* \lambda+s-c\rangle$  is the Lagrangian function of ($\operatorname{D_{\mu}}$), $\mathcal{I}_{\mathcal{F}_{\operatorname{P_{\mu}}}}$ denotes the indicator function of $\mathcal{F}_{\operatorname{P_{\mu}}}$, and $\rho>0$ is the penalty parameter. This derivation naturally leads to an augmented Lagrangian function for the barrier subproblem~\eqref{cone problem with barrier} as follows: 
\begin{equation}\label{augmented Lagrangian}
	L(x,\lambda,s;\mu,\rho):=-\rho \langle{b}, \lambda \rangle+\rho\mu\phi(s)+\rho\langle x,\mathcal{A}^{*}\lambda+s-c\rangle+\dfrac{1}{2}\left\|\mathcal{A}^{*}\lambda+s-c\right\|^2.
\end{equation} 
\begin{remark}
    For any fixed $(x,\lambda,\mu,\rho)$, the function $L(x,\lambda,\cdot;\mu,\rho)$ is $1$-strongly convex in ${\rm int}\, (\K)$, and $L(x,\lambda,s;\mu,\rho) \rightarrow +\infty$ as $s$ converges to a boundary point of $\K$. Therefore, it admits a unique global minimizer in ${\rm int}\, (\K)$.
\end{remark}

Let $s(x,\lambda;\mu,\rho)$ be the unique global minimizer of $L(x,\lambda,\cdot;\mu,\rho)$ in ${\rm int}\, (\K)$. 
Then $\nabla_s L(x,\lambda,s(x,\lambda;\mu,\rho);\mu,\rho) = 0$, i.e.,
\begin{equation}\label{quadratic function}
 \rho\mu\nabla_s\phi(s(x,\lambda;\mu,\rho))+\rho x+\mathcal{A}^{*}\lambda+s(x,\lambda;\mu,\rho)-c=0.
\end{equation}
 This combined with the identity $\nabla_s \phi(s(x,\lambda;\mu,\rho)) = -s(x,\lambda;\mu,\rho)^{-1}$ indicates that $s(x,\lambda;\mu,\rho)$ admits a closed form expression 
\begin{equation}\label{Eq:function s}
		s(x,\lambda;\mu,\rho)=\frac{1}{2}\left(\left((\rho x-c+\mathcal{A}^{*}\lambda)^2+4\rho\mu e\right)^{\frac{1}{2}}-(\rho x-c+\mathcal{A}^{*}\lambda)\right),
\end{equation}
where both the squaring and square root operations are defined within the context of Euclidean Jordan algebra,  and $e$ denotes its identity element.
Since $\phi$ is infinitely differentiable on $\interior\,(\K)$, then
$
    \nabla_s L(x,\lambda,s;\mu,\rho) 
$
is infinitely differentiable with respect to $(x,\lambda,s,\mu,\rho)$ and 
$$
    \nabla_{s}^2 L(x,\lambda,s;\mu,\rho) = \rho \mu \nabla^2_{s} \phi (s) \succ 0\quad \text{if } s\in \interior\, (\K) .
$$
By the implicit function theorem, $s(x,\lambda;\mu,\rho)$ is therefore infinitely differentiable with respect to $(x,\lambda,\mu,\rho)$.

 In the computation of the NAL method (see Algorithm~\ref{algsbal}), each outer iteration requires approximately solving a subproblem that minimizes $L(x,\lambda,s;\mu,\rho)$ with respect to $(\lambda,s)$, whose search space dimension is significantly larger than that of the classical ALM. However, since $s(x,\lambda;\mu,\rho)$ admits a closed form expression, we can instead directly consider the function obtained by minimizing with respect to $s$. We refer to this resulting function as a \textit{novel augmented Lagrangian function}, defined by 
\begin{equation}
	\begin{aligned}
	\eta(x,\lambda;\mu,\rho) &:= \min_s\left\{L(x,\lambda,s;\mu,\rho)\right\}\\
	&\ =  -\rho \langle b,\lambda\rangle+\rho\mu\phi(s(x,\lambda;\mu,\rho))+\rho \langle x,\mathcal{A}^{*}\lambda+s(x,\lambda;\mu,\rho)-c\rangle\\
	& \qquad \qquad +\dfrac{1}{2}\left\|\mathcal{A}^{*}\lambda+s(x,\lambda;\mu,\rho)-c\right\|^2.
	\end{aligned}
\end{equation}
 The function $\eta(x,\lambda;\mu,\rho)$ is then infinitely differentiable with respect to $(x,\lambda,\mu,\rho)$, derived from the infinite differentiability of $s$.
Theorem~\ref{self concordant} demonstrates that $\eta(x,\lambda;\mu,\rho)$ is self-concordant with respect to $\lambda$.

\begin{theorem}\label{self concordant}
    Fix $\mu>0$, $\rho>0$, and $x\in\J$. Assume that $\rho\mu<1$. Then the function $ \eta(x,\cdot;\mu,\rho)$ is $\rho\mu$-self-concordant.
\end{theorem}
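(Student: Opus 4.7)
The plan is to apply the envelope theorem and implicit differentiation to the first-order condition \eqref{quadratic function}, thereby reducing the self-concordance claim for $\eta$ to the known $1$-self-concordance of the barrier $\phi$. Concretely, for fixed $x$, $\mu$, $\rho$, abbreviate $s\equiv s(x,\lambda;\mu,\rho)$ and introduce the positive definite operator $\Phi := \rho\mu\,\nabla_s^2\phi(s)\succ 0$. Implicitly differentiating \eqref{quadratic function} in a direction $h$ gives $(I+\Phi)\,D_\lambda s[h] = -\mathcal{A}^* h$. Writing $u:=\mathcal{A}^* h$ and $w:=(I+\Phi)^{-1}u$, this says $D_\lambda s[h] = -w$, which will be the key algebraic identity driving every subsequent cancellation.

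By the envelope theorem, $\nabla_\lambda\eta = -\rho b + \rho\mathcal{A}x + \mathcal{A}(\mathcal{A}^*\lambda+s-c)$, so one round of the chain rule together with the relation above yields $D^2_\lambda\eta[h,h] = \langle u,(I+\Phi)^{-1}\Phi\, u\rangle$. Differentiating once more and tracking the $\lambda$-dependence of $\Phi$ through $s(\lambda)$, the contributions from differentiating the matrix factor $(I+\Phi)^{-1}\Phi = I - (I+\Phi)^{-1}$ collapse, and only the genuine third-order contribution of $\phi$ survives. Using $D_\lambda s[h] = -w$ and symmetry of $D^3\phi(s)$, one obtains $D^3_\lambda\eta[h,h,h] = -\rho\mu\,D^3\phi(s)[w,w,w]$.

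Invoking the $1$-self-concordance of $\phi$, $|D^3\phi(s)[w,w,w]|\le 2(D^2\phi(s)[w,w])^{3/2}$. Combined with the scaling identity $D^2\phi(s)[w,w] = (\rho\mu)^{-1}\langle w,\Phi w\rangle$, this gives $|D^3_\lambda\eta[h,h,h]|\le 2(\rho\mu)^{-1/2}\langle w,\Phi w\rangle^{3/2}$. To close the argument I verify the pointwise inequality $\langle w,\Phi w\rangle\le D^2_\lambda\eta[h,h]$: since $\Phi$ and $(I+\Phi)^{-1}$ commute, simultaneous diagonalization reduces this to the scalar inequality $\phi_i/(1+\phi_i)^2 \le \phi_i/(1+\phi_i)$ for every eigenvalue $\phi_i>0$ of $\Phi$, which is immediate. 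Substituting this comparison yields exactly the definition of $\rho\mu$-self-concordance.

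The main obstacle is the bookkeeping in the third-derivative calculation: because $\Phi$ depends on $\lambda$ through $s(\lambda)$, naively differentiating $\langle u,(I+\Phi)^{-1}\Phi u\rangle$ produces several terms that must be shown to recombine into the single expression $-\rho\mu\,D^3\phi(s)[w,w,w]$; the identity $D_\lambda s[h] = -w$ together with the resolvent identity $D[(I+\Phi)^{-1}][\Delta] = -(I+\Phi)^{-1}\Delta(I+\Phi)^{-1}$ is what makes this cancellation happen. I note that the hypothesis $\rho\mu<1$ does not enter the self-concordance inequality itself; it fixes a regime that is convenient for the subsequent complexity analysis in \cref{sec4}.
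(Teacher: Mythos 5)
Your proof is correct, but it takes a genuinely different route from the paper. The paper never differentiates $\eta$ directly: it splits $L=L_1+L_2$ with $L_1=-\rho\langle b,\lambda\rangle+\rho\mu\phi(s)+\rho\langle x,\mathcal{A}^*\lambda+s-c\rangle$ and $L_2=\frac12\|\mathcal{A}^*\lambda+s-c\|^2$, checks that $L_1$ is $\rho\mu$-self-concordant in $(\lambda,s)$ (inherited from the $1$-self-concordance of $\phi$) and that the convex quadratic $L_2$ has vanishing third derivative, invokes the additivity of self-concordance \cite[Theorem 5.1.1]{nesterov2018lectures}, and then appeals to the partial-minimization theorem \cite[Theorem 5.1.11]{nesterov2018lectures} to transfer the property to $\eta(x,\cdot;\mu,\rho)=\min_s L(x,\lambda,s;\mu,\rho)$. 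Your argument instead differentiates the first-order condition \eqref{quadratic function} implicitly, obtains $D_\lambda s[h]=-H^{-1}\mathcal{A}^*h$ (which is exactly Lemma \ref{derivatives}(ii) with $H=I+\Phi$), and computes $D^2_\lambda\eta[h,h]=\langle u,(I+\Phi)^{-1}\Phi u\rangle$ and $D^3_\lambda\eta[h,h,h]=-\rho\mu\,D^3\phi(s)[w,w,w]$ explicitly; the closing comparison $\langle w,\Phi w\rangle=\langle u,H^{-1}WH^{-1}u\rangle\le\langle u,H^{-1}Wu\rangle$ is the same operator inequality the paper uses later in \eqref{function dphi1}. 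I verified the sign and symmetry bookkeeping in your third-derivative computation and it is sound. The trade-off: the paper's route is shorter and modular but rests on two black-box theorems; yours is self-contained, produces the Hessian formula $\mathcal{A}H^{-1}W\mathcal{A}^*$ of Theorem \ref{gradent and hessian of eta} as a byproduct, and makes transparent exactly where the constant $\rho\mu$ comes from. Your remark that $\rho\mu<1$ is never used in the inequality is also consistent with the paper, whose proof likewise does not invoke it.
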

\begin{proof}
	 We first show that $L(x,\cdot,\cdot;\mu,\rho)$ is $\rho \mu$-self-concordant. Let $t\!=\!(\lambda,s)$, and let $ h=(h_1,h_2) \in \mathbb{R}^m\times \J$  be an arbitrary vector.
     Define $$L_1(x,\lambda,s;\mu,\rho)=-\rho  \langle b,\lambda\rangle+\rho\mu\phi(s)+\rho \langle x,\mathcal{A}^{*}\lambda+s-c\rangle$$ and $$L_2(\lambda,s;\mu,\rho)=\dfrac{1}{2}\left\|\mathcal{A}^{*}\lambda+s-c\right\|^2.$$ 
     Then $L(x,\lambda,s;\mu,\rho)=L_1(x,\lambda,s;\mu,\rho)+L_2(\lambda,s;\mu,\rho)$. Direct computation shows that 
     $$
     D^2_tL_1 (x,\lambda,s;\mu,\rho)[h,h] = \rho\mu D^2_{s} \phi(s)[h_2,h_2] \ge 0,
     $$ 
     which implies $L_1 (x,\cdot,\cdot,\mu,\rho)$ is convex. Moreover,
     \begin{equation}
	 	\begin{aligned}
	 		\left|D_{t}^3L_1(x,\lambda,s;\mu,\rho)[h,h,h]\right|=&\rho\mu\left|D_{s}^3\phi(s)[h_2,h_2,h_2]\right|
	 		\leq2\rho\mu({D_{s}^2\phi(s)[h_2,h_2]})^{\frac{3}{2}}\\
	 		=&\dfrac{2}{\sqrt{\rho\mu}}({D_{t}^2L_1(x,\lambda,s;\mu,\rho)[h,h]})^{\frac{3}{2}}.
	 	\end{aligned}
	 \end{equation} 
     Thus, $L_1(x,\cdot,\cdot;\mu,\rho)$ is $\rho\mu$-self-concordant. Furthermore, 
     $$
     D_{t}^2L_2(\lambda,s;\mu,\rho)[h,h] = \| \A^* h_1 + h_2 \|^2 \ge 0,
     $$ 
     which shows that $L_2(\cdot,\cdot;\mu,\rho)$ is convex. In addition,
      $$
      \left|D_{t}^3L_2(\lambda,s;\mu,\rho)[h,h,h]\right|=0\leq2({D_{t}^2L_2(\lambda,s;\mu,\rho)[h,h]})^{\frac{3}{2}}.
      $$
      By \cite[Theorem 5.1.1]{nesterov2018lectures}, it follows that
      $$
      \left|D_{t}^3L(x,\lambda,s;\mu,\rho)[h,h,h]\right|\leq\dfrac{2}{\sqrt{\rho\mu}}({D_{t}^2L(x,\lambda,s;\mu,\rho)[h,h]})^{\frac{3}{2}}.
      $$
      Hence, $L(x,\cdot,\cdot;\mu,\rho)$ is $\rho\mu$-self-concordant.

	 Since 
	\begin{equation}
		\begin{aligned}
			\eta(x,\lambda;\mu,\rho)&=\min_s\left\{L(x,\lambda,s;\mu,\rho)\right\},
		\end{aligned}
	\end{equation} 
     \cite[Theorem 5.1.11]{nesterov2018lectures} further implies that $$\left|D_{\lambda}^3\eta(x,\lambda;\mu,\rho)[h_1,h_1,h_1]\right|\leq{\dfrac{2}{\sqrt{\rho\mu}}}\left(D_{\lambda}^2\eta(x,\lambda;\mu,\rho)[h_1,h_1]\right)^{\frac{3}{2}}.$$
	The proof is complete.
\end{proof}


 To derive the explicit expressions for the first- and second-order derivatives of $\eta(x,\lambda;\mu,\rho)$, we introduce an auxiliary variable
\begin{equation}\label{equation z}
	z(x,\lambda;\mu,\rho) := \rho x+\mathcal{A}^{*}\lambda+s(x,\lambda;\mu,\rho)-c.
\end{equation}
 Since $s(x,\lambda;\mu,\rho)$ is infinitely differentiable, it follows immediately that $z(x,\lambda;\mu,\rho)$ is infinitely differentiable with respect to $(x,\lambda,\mu,\rho)$. Both variables also satisfy the following relationship.
\begin{lemma}\label{center path}
Given  $\mu>0$, $\rho >0$, $x\in \J$, and $(x,\lambda)\in \J \times \mathbb{R}^m$, then the following identities hold:
	\begin{subequations}
	\begin{align}
	  &\langle s(x,\lambda;\mu,\rho),z(x,\lambda;\mu,\rho)\rangle=\rho\mu\nu,\\
	  &  s(x,\lambda;\mu,\rho) \circ z(x,\lambda;\mu,\rho) = \rho \mu e.\label{eq:center path b}
	\end{align}
	\end{subequations}
\end{lemma}
\begin{proof}
Based on the definitions of $s(x,\lambda;\mu,\rho)$ and $z(x,\lambda;\mu,\rho)$, we have
	$$
	\begin{aligned}
	  	\langle & s(x,\lambda;\mu,\rho),z(x,\lambda;\mu,\rho)\rangle=\langle s(x,\lambda;\mu,\rho),-\rho\mu\nabla_s \phi(s(x,\lambda;\mu,\rho))\rangle \overset{\text{(i)}}{=} \rho\mu\nu,\\
	  	& s(x,\lambda;\mu,\rho) \circ z(x,\lambda;\mu,\rho) = s(x,\lambda;\mu,\rho)\circ (-\rho \mu \nabla_s \phi(s(x,\lambda;\mu,\rho)) ) \overset{\text{(ii)}}{=} \rho \mu e. 
	\end{aligned}
    $$ 
Here, identity~(i) follows from \eqref{equation 1}, and identity~(ii) follows from \eqref{eq:derivatives of phi}.
\end{proof}
\begin{remark}
It is noteworthy that in classical IPMs, the iterates $x$ and $s$ generally only satisfy $\langle x,s\rangle\approx \mu\nu$ \cite{nocedal1999numerical}, thereby departing from the central path. In contrast, Lemma~\ref{center path} demonstrates that $s(x,\lambda;\mu,\rho)$ and $z(x,\lambda;\mu,\rho)$ are exactly on the central path during iterations, which distinguishes the proposed method from classical IPMs.
\end{remark}
In addition, by defining $W := \rho\mu \nabla_{s}^{2} \phi(s(x,\lambda;\mu,\rho))$ and $H:=I+W$, we obtain the derivatives of $s(x,\lambda;\mu,\rho)$ and $z(x,\lambda;\mu,\rho)$ in Lemma~\ref{derivatives}, which are fundamental for subsequent computations.
\begin{lemma}\label{derivatives}
 	For given $\mu>0$ and $\rho >0$, the following conclusions hold:
 	\begin{enumerate}
 		\item[(i)] The derivatives of $s(x,\lambda;\mu,\rho)$ and $z(x,\lambda;\mu,\rho)$ with respect to $x$ are
 		\begin{subequations}
 			\begin{align}
 				D_{x} s(x,{\lambda};\mu,\rho)&=-\rho H^{-1},\label{derivatives s of x}\\
 				D_{x} z(x,{\lambda};\mu,\rho)&=\rho\left(I-H^{-1}\right).\label{derivatives z of x}
 			\end{align}
 		\end{subequations}
 		\item[(ii)]The derivatives of $s(x,\lambda;\mu,\rho)$ and $z(x,\lambda;\mu,\rho)$ with respect to $\lambda$ are
 		\begin{subequations}
 			\begin{align}
 				D_{\lambda} s(x,{\lambda};\mu,\rho)&=-H^{-1}\mathcal{A}^*,\label{derivatives s of lambda}\\
 				D_{\lambda} z(x,{\lambda};\mu,\rho)&=(I-H^{-1})\mathcal{A}^*.\label{derivatives z of lambda}
 			\end{align}
 		\end{subequations}  
 	  \item[(iii)] The derivatives of $s(x,\lambda;\mu,\rho)$ and $z(x,\lambda;\mu,\rho)$ with respect to $\mu$ are
 	   \begin{subequations}
 	  	\begin{align}
 	  		s'(x,{\lambda};\mu,\rho)&=-\rho H^{-1}\nabla_s \phi(s(x,\lambda;\mu,\rho)),\label{derivatives s of mu}\\
 	  		z'(x,{\lambda};\mu,\rho)&=-\rho H^{-1}\nabla_s \phi(s(x,\lambda;\mu,\rho)).\label{derivatives z of mu}
 	  	\end{align}
 	  \end{subequations}
 	\end{enumerate}
 \end{lemma}

\begin{proof}
Note that 
\begin{equation}\label{equation s1}
	z(x,\lambda;\mu,\rho)=\rho x+\mathcal{A}^{*}\lambda+s(x,\lambda;\mu,\rho)-c
\end{equation}
and 
\begin{equation}\label{equation s2}
	z(x,\lambda;\mu,\rho)=-\rho\mu\nabla_s \phi(s(x,\lambda;\mu,\rho)).
\end{equation}
 By differentiating both sides of \eqref{equation s1}
 and \eqref{equation s2}
 with respect to $x$, we obtain 
 \begin{equation}
	 \rho I+D_x s(x,\lambda;\mu,\rho) =-\rho\mu \nabla_{s}^{2} \phi(s(x,\lambda;\mu,\rho))D_x s(x,\lambda;\mu,\rho).
	 \end{equation}
 Therefore, \eqref{derivatives s of x} and \eqref{derivatives z of x}  hold. Equations \eqref{derivatives s of lambda}, \eqref{derivatives z of lambda}, \eqref{derivatives s of mu} and \eqref{derivatives z of mu} can be obtained in the same way.
\end{proof}

Lemma~\ref{derivatives} immediately yields the following result.
\begin{theorem}\label{gradent and hessian of eta}
	The gradient and Hessian of $\eta(x,\lambda;\mu,\rho)$ with respect to $\lambda$ are
		\begin{subequations}\label{implements of Ly2}
		\begin{align}
			\nabla_\lambda \eta(x,\lambda;\mu,\rho) &= -\rho b+ \mathcal{A}z(x,\lambda;\mu,\rho),\label{gradient of Ly2}\\
			\nabla_{\lambda }^2 \eta(x,\lambda;\mu,\rho) &=\mathcal{A}H^{-1}W\mathcal{A}^{*}.\label{hessian of Ly2}
		\end{align}
	\end{subequations}
The gradient and Hessian of $\eta(x,\lambda;\mu,\rho)$ with respect to $x$ are
    	\begin{subequations}\label{gradient of eta with x}
    	\begin{align}
    		\nabla_x \eta(x,\lambda;\mu,\rho) &=\rho(\mathcal{A}^*\lambda+s(x,\lambda;\mu,\rho)-c) ,\label{gradient of Lx2}\\
    		\nabla_{x}^2 \eta(x,\lambda;\mu,\rho) &=-\rho^2H^{-1}.\label{hessian of Lx2}
    	\end{align}
    \end{subequations}
Therefore, $\eta(x,\lambda;\mu,\rho)$ is strictly convex in $\lambda$ and strictly concave in $x$.
\end{theorem}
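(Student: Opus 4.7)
The plan is to exploit the fact that $\eta$ is defined as a value function of an inner minimization whose minimizer $s(x,\lambda;\mu,\rho)$ lies in the interior of $\mathcal{K}$ and satisfies the first-order condition \eqref{quadratic function}. Since this condition is exactly $\rho\mu\nabla_s\phi(s)+z=0$, both the envelope theorem and implicit differentiation apply cleanly.

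First, I would compute the gradients using the envelope (Danskin-type) argument: because $\nabla_s L(x,\lambda,s(x,\lambda;\mu,\rho);\mu,\rho)=0$, the total derivative of $\eta$ with respect to $\lambda$ (resp.\ $x$) equals the partial derivative of $L$ with respect to $\lambda$ (resp.\ $x$), evaluated at $s=s(x,\lambda;\mu,\rho)$. A direct differentiation of \eqref{augmented Lagrangian} gives
\begin{equation*}
\nabla_\lambda L = -\rho b + \rho\mathcal{A}x + \mathcal{A}(\mathcal{A}^*\lambda+s-c) = -\rho b + \mathcal{A}z,
\end{equation*}
yielding \eqref{gradient of Ly2}, and $\nabla_x L = \rho(\mathcal{A}^*\lambda+s-c)$, yielding \eqref{gradient of Lx2}.

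Next, to obtain the Hessians, I would apply implicit differentiation to \eqref{quadratic function}. Differentiating with respect to $\lambda$ gives $(I+\rho\mu\nabla_s^2\phi(s))\,\nabla_\lambda s = -\mathcal{A}^*$, i.e.\ $\nabla_\lambda s = -H^{-1}\mathcal{A}^*$. Substituting into $\nabla_\lambda z = \mathcal{A}^* + \nabla_\lambda s = (I-H^{-1})\mathcal{A}^*$ and using the identity $I-H^{-1}=H^{-1}(H-I)=H^{-1}W$, chain rule on \eqref{gradient of Ly2} then gives $\nabla_\lambda^2\eta = \mathcal{A}\,\nabla_\lambda z = \mathcal{A}H^{-1}W\mathcal{A}^*$, which is \eqref{hessian of Ly2}. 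Analogously, differentiating \eqref{quadratic function} with respect to $x$ yields $\nabla_x s = -\rho H^{-1}$, so differentiating \eqref{gradient of Lx2} gives $\nabla_x^2\eta = \rho\nabla_x s = -\rho^2 H^{-1}$, which is \eqref{hessian of Lx2}.

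Finally, strict convexity in $\lambda$ and strict concavity in $x$ follow from positive definiteness of the relevant operators. Since $\phi$ is a self-concordant barrier, $\nabla_s^2\phi(s)\succ 0$, hence $W\succ 0$ and $H = I+W \succ I$, so $H^{-1}\succ 0$. Moreover, $H$ and $W$ commute, so $H^{-1}W$ is symmetric and equals $I-H^{-1}\succ 0$. Combined with the surjectivity of $\mathcal{A}$ (so that $\mathcal{A}^*$ is injective), this yields $\mathcal{A}H^{-1}W\mathcal{A}^*\succ 0$, while $-\rho^2H^{-1}\prec 0$.

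The main obstacle is minor: one must justify that $s(x,\lambda;\mu,\rho)$ depends smoothly on $(x,\lambda)$ so that implicit differentiation is valid. This follows from the implicit function theorem applied to \eqref{quadratic function}, since the Jacobian $H = I+\rho\mu\nabla_s^2\phi(s)$ is invertible at every interior $s$, together with the earlier remark that the minimizer lies in $\mathrm{int}\,(\mathcal{K})$ and is unique.
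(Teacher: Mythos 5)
Your proposal is correct and follows essentially the same route as the paper: the gradients come from the envelope argument (valid because $\nabla_s L=0$ at the inner minimizer), and the Hessians come from implicitly differentiating the stationarity condition \eqref{quadratic function}, which is precisely the content of the paper's Lemma \ref{derivatives} giving $D_\lambda s=-H^{-1}\mathcal{A}^*$, $D_\lambda z=(I-H^{-1})\mathcal{A}^*=H^{-1}W\mathcal{A}^*$ and $D_x s=-\rho H^{-1}$. Your additional remarks on smoothness of $s(\cdot,\cdot;\mu,\rho)$ via the implicit function theorem and on definiteness (using $\nabla_s^2\phi\succ 0$ on ${\rm int}\,(\mathcal{K})$ and the surjectivity of $\mathcal{A}$) are accurate and fill in details the paper leaves implicit.
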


More properties of \( \eta(x,\lambda;\mu,\rho) \) are given in Theorem~\ref{inequality 14}, which outlines the sensitivity of \( \nabla_{\lambda} \eta(x,\lambda;\mu,\rho) \) and \( \nabla_{\lambda}^2 \eta(x,\lambda;\mu,\rho) \) with respect to \( \mu \).


\begin{theorem}\label{inequality 14}
	For given $\rho > 0$, $\mu > 0$, and $(x,\lambda) \in \J \times \mathbb{R}^m$, we have
	\begin{subequations}\label{inequality 14-1}
    	\begin{align}
    		& \left|\nabla_{\lambda} \eta'(x,\lambda;\mu,\rho)^{\top}h\right|\leq\sqrt{\dfrac{\rho\nu}{\mu}}\sqrt{h^{\top}\nabla_{\lambda }^2 \eta(x,\lambda;\mu,\rho)h},\,\forall\, h\in \mathbb{R}^m,  \\
    		& \left|h^{\top}\nabla_{\lambda}^2 \eta'(x,\lambda;\mu,\rho)h\right|\leq{\dfrac{1+2\sqrt{\nu}}{\mu}\left|h^{\top}\nabla_{\lambda}^2 \eta (x,\lambda;\mu,\rho)h\right|,\,\forall\, h\in \mathbb{R}^m. }
    	\end{align}
    \end{subequations}
\end{theorem}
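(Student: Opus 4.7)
The cornerstone of the plan is to derive an explicit formula for $s'=\partial s(x,\lambda;\mu,\rho)/\partial\mu$ via implicit differentiation of the optimality condition \eqref{quadratic function}. Differentiating $\rho x+\mathcal{A}^{*}\lambda+s-c=-\rho\mu\,\nabla_{s}\phi(s)$ in $\mu$ (with $x,\lambda$ fixed) and using $\nabla_{s}^{2}\phi(s)\,s=-\nabla_{s}\phi(s)$ from \eqref{equation 1} yields $Hs'=\tfrac{1}{\mu}Ws$, so $s'=\tfrac{1}{\mu}H^{-1}Ws$. Throughout, I rely on two recurring facts: the commuting symmetric positive definite operators $H=I+W$ and $W$ satisfy $M:=H^{-1}W=I-H^{-1}\preceq I$, and log-homogeneity gives $\langle s,Ws\rangle=\rho\mu\langle s,\nabla_{s}^{2}\phi(s)s\rangle=\rho\mu\nu$.

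For the first inequality, combining $\nabla_{\lambda}\eta=-\rho b+\mathcal{A}z$ from \eqref{gradient of Ly2} with $z'=s'$ gives $\nabla_{\lambda}\eta'=\tfrac{1}{\mu}\mathcal{A}H^{-1}Ws$. I will write $h^{\top}\nabla_{\lambda}\eta'=\tfrac{1}{\mu}\langle M^{1/2}\mathcal{A}^{*}h,\,M^{1/2}s\rangle$ and apply Cauchy-Schwarz. The first factor has squared norm $h^{\top}\mathcal{A}H^{-1}W\mathcal{A}^{*}h=h^{\top}\nabla_{\lambda}^{2}\eta\,h$ by \eqref{hessian of Ly2}, while $\|M^{1/2}s\|^{2}=\langle s,Ms\rangle\le\langle s,Ws\rangle=\rho\mu\nu$ since $M\preceq I$. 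Multiplying the bounds delivers the advertised constant $\sqrt{\rho\nu/\mu}$.

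For the second inequality, I start from $\nabla_{\lambda}^{2}\eta=\mathcal{A}(I-H^{-1})\mathcal{A}^{*}$, differentiate to obtain $\nabla_{\lambda}^{2}\eta'=\mathcal{A}H^{-1}W'H^{-1}\mathcal{A}^{*}$, and, setting $w=H^{-1}\mathcal{A}^{*}h$, split
\[
h^{\top}\nabla_{\lambda}^{2}\eta'\,h=\langle w,W'w\rangle=\tfrac{1}{\mu}\langle w,Ww\rangle+\rho\mu\,D^{3}\phi(s)[s',w,w].
\]
Since $h^{\top}\nabla_{\lambda}^{2}\eta\,h=\langle w,(W+W^{2})w\rangle\ge\langle w,Ww\rangle$, the first piece is already in the right form. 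The main obstacle, and the step I expect to require the most care, is controlling the trilinear term: I plan to invoke the standard polarized self-concordance estimate
\[
|D^{3}\phi(s)[u,w,w]|\le 2\sqrt{\langle u,\nabla_{s}^{2}\phi(s)u\rangle}\,\langle w,\nabla_{s}^{2}\phi(s)w\rangle
\]
with $u=s'$, and reuse $\langle s',Ws'\rangle=\tfrac{1}{\mu^{2}}\langle s,WM^{2}s\rangle\le\rho\nu/\mu$, which again leans on $M^{2}\preceq I$ together with $\langle s,Ws\rangle=\rho\mu\nu$. This produces a $2\sqrt{\nu}/\mu$ contribution from the $D^{3}\phi$ term that, combined with the $1/\mu$ from the first piece, yields the target constant $(1+2\sqrt{\nu})/\mu$; the absolute value on the right-hand side is harmless because $\nabla_{\lambda}^{2}\eta\succ 0$ by Theorem~\ref{gradent and hessian of eta}.
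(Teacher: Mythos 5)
Your proposal is correct and follows essentially the same route as the paper: both compute $s'=-\rho H^{-1}\nabla_s\phi(s)=\tfrac{1}{\mu}H^{-1}Ws$, prove (a) by the same Cauchy--Schwarz factorization through $H^{-1/2}W^{1/2}$ together with $\langle s,Ws\rangle=\rho\mu\nu$, and prove (b) by writing $\nabla_\lambda^2\eta'=\mathcal{A}H^{-1}W'H^{-1}\mathcal{A}^*$, splitting off the $\tfrac{1}{\mu}\langle w,Ww\rangle$ term, and controlling the trilinear piece with the self-concordance polarization bound (Nesterov's Lemma 5.1.2) to get the $2\sqrt{\nu}/\mu$ contribution. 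The only nitpick is that $\langle s,Ms\rangle\le\langle s,Ws\rangle$ follows from $H^{-1}\preceq I$ (i.e.\ $M\preceq W$) rather than from $M\preceq I$ as stated, a trivially repaired justification.
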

\begin{proof}
	According to Theorem~\ref{gradent and hessian of eta} and \eqref{derivatives z of mu}, we have 
	\begin{equation*}
		\nabla_{\lambda} \eta'(x,\lambda;\mu,\rho)=\mathcal{A}z'(x,\lambda;\mu,\rho)=-\rho \mathcal{A}H^{-1}\nabla_s \phi(s(x,\lambda;\mu,\rho)).
	\end{equation*}
	Therefore,
	\begin{equation}
		\begin{aligned}
			&\ \left|\nabla_{\lambda} \eta'(x,\lambda;\mu,\rho)^{\top}h\right|\\
			=&\ \rho\left|h^{\top} \mathcal{A}H^{-1}\nabla_s \phi(s(x,\lambda;\mu,\rho))\right|\\
			=&\ \rho\left|h^{\top} \mathcal{A}H^{-\frac{1}{2}}W^{\frac{1}{2}}W^{-\frac{1}{2}}H^{-\frac{1}{2}}\nabla_s \phi(s(x,\lambda;\mu,\rho))\right|\\
			\leq &\ \rho\left\|h^{\top}  \mathcal{A}H^{-\frac{1}{2}}W^{\frac{1}{2}}\right\|_2\left\|W^{-\frac{1}{2}}H^{-\frac{1}{2}}\nabla_s \phi(s(x,\lambda;\mu,\rho))\right\|_2\\
				\overset{\text{(i)}}{\leq} &\ \rho\sqrt{h^{\top}\nabla_{\lambda }^2 \eta(x,\lambda;\mu,\rho)h}\sqrt{\langle \nabla_s \phi(s(x,\lambda;\mu,\rho)), W^{-1}\nabla_s \phi(s(x,\lambda;\mu,\rho))\rangle}\\
			=&\ \sqrt{\dfrac{\rho\nu}{\mu}}\sqrt{h^{\top}\nabla_{\lambda }^2 \eta(x,\lambda;\mu,\rho)h},
		\end{aligned}
	\end{equation}
	where (i) follows from the facts that  $H^{-\frac{1}{2}} W H^{-\frac{1}{2}} = H^{-1}W$ and $ H^{-\frac{1}{2}}W^{-1} H^{-\frac{1}{2}} \prec W^{-1}$. This proves (\ref{inequality 14-1}a). Define
	\begin{equation}
		\psi(\mu) := h^{\top}\nabla_{\lambda}^2 \eta (x,\lambda;\mu,\rho)h.
	\end{equation}
	For simplicity, we will abbreviate $\phi(s(x,\lambda;\mu,\rho))$ and $s(x,\lambda;\mu,\rho)$ as $\phi$ and $s$,  respectively. 
	From Theorem~\ref{gradent and hessian of eta}, we have
	\begin{equation}\label{function psi}
		\psi(\mu)=h^{\top}\mathcal{A}H^{-1}W\mathcal{A}^{*}h={h}^{\top}\mathcal{A}(I+W^{-1})^{-1}\mathcal{A}^{*}{h}.
	\end{equation}
	Differentiating \eqref{function psi} with respect to $\mu$ yields:
	\begin{equation}\label{function dphi}
		\begin{aligned}
			\psi'(\mu)&=h^{\top}\mathcal{A}(I+W^{-1})^{-1}W^{-1}(\rho\nabla_{s}^2\phi+\rho\mu D_{s}^3\phi s')W^{-1}(I+W^{-1})^{-1}\mathcal{A}^{*}{h}\\
			&=\frac{1}{\mu} \langle \bar{h},W\bar{h}\rangle+(\rho\mu D_{s}^3\phi)[\bar{h},\bar{h}, s'],
		\end{aligned}
	\end{equation}
	where $\bar{h}:=H^{-1}\mathcal{A}^{*}{h}$.
	On the one hand,
	\begin{equation}\label{function dphi1}
		\begin{aligned}
			\left|\frac{1}{\mu}\langle \bar{h},W\bar{h}\rangle \right|&=	\frac{1}{\mu}\left|{h}^{\top}\mathcal{A}H^{-1}WH^{-1}\mathcal{A}^{*}{h}\right|\\
			& \overset{\text{(i)}}{\leq} \frac{1}{\mu}\left|{h}^{\top}\mathcal{A}H^{-1}W\mathcal{A}^{*}{h}\right|\\
			&=\frac{1}{\mu}\left|h^{\top}\nabla_{\lambda}^2 \eta (x,\lambda;\mu,\rho)h\right|,
		\end{aligned}
	\end{equation}
	 where (i) follows from the fact that $H^{-1} W H^{-1} \prec H^{-1} W$. On the other hand, applying \cite[Lemma 5.1.2]{nesterov2018lectures} to the last term of \eqref{function dphi}, we have
	\begin{equation}\label{function dphi2}
		\begin{aligned}
			\left|(\rho\mu D_{s}^3\phi)[\bar{h},\bar{h}, s']\right|&\leq 2\rho\mu D_{s}^2\phi[\bar{h},\bar{h}]\left(D_{s}^2\phi[s',s']\right)^{\frac{1}{2}}\\
			&\leq 2\left|h^{\top}\nabla_{\lambda}^2 \eta (x,\lambda;\mu,\rho)h\right|\sqrt{\rho^2 \langle \nabla_s\phi, H^{-1}\nabla_{s}^2\phi H^{-1}\nabla_s\phi\rangle}\\
			&\leq \dfrac{2}{\mu}\left|h^{\top}\nabla_{\lambda}^2 \eta (x,\lambda;\mu,\rho)h\right|\sqrt{\langle \nabla_s\phi ,(\nabla_{s}^2\phi)^{-1}\nabla_s\phi \rangle}\\
			&=\dfrac{2\sqrt{\nu}}{\mu}\left|h^{\top}\nabla_{\lambda}^2 \eta (x,\lambda;\mu,\rho)h\right|,
		\end{aligned}
	\end{equation}
	where the equality follows from \eqref{equation 1}.  Combining \eqref{function dphi1} with \eqref{function dphi2} yields
	\begin{equation}
		\left|h^{\top}\nabla_{\lambda}^2 \eta'(x,\lambda;\mu,\rho)h\right|=\left|\psi'(\mu)\right|\leq\dfrac{1+2\sqrt{\nu}}{\mu}\left|h^{\top}\nabla_{\lambda}^2 \eta (x,\lambda;\mu,\rho)h\right|,
	\end{equation}
	thereby establishing  (\ref{inequality 14-1}b).
\end{proof}
Based on Theorems~\ref{self concordant} and \ref{inequality 14} with Definition~\ref{def:self-concordant family}, we obtain the following theorem, which demonstrates that the function family $\{\eta(x,\cdot;\mu,\rho)\mid \mu>0\}$ is strongly self-concordant.
\begin{theorem}\label{self concordant family}
	Given that $\mu>0$, $\rho>0$, $\rho\mu<1$, and $x\in \J$, the function family $\{\eta(x,\cdot;\mu,\rho) \mid \mu>0\}$ is strongly self-concordant with parameters
	\begin{equation}\label{thm:stongly-self-concordant-parameter}
		\alpha(\mu)=\rho\mu,\,\beta(\mu)=\gamma(\mu)=1,\,\xi(\mu)=\dfrac{\sqrt{\nu}}{\mu},\,\zeta(\mu)=\dfrac{1+2\sqrt{\nu}}{2\mu}.
	\end{equation}
\end{theorem}
\begin{proof}
     From Theorem~\ref{self concordant}, it suffices to verify property (iii) of the strong self-concordance. By the parameters provided in \eqref{thm:stongly-self-concordant-parameter}, we have 
\begin{equation}
\begin{aligned}
        &\ | \langle h, \nabla_\lambda \eta'(x,\lambda;\mu,\rho) \rangle - \{{\ln}\beta(\mu)\}_{\mu}^{\prime}\langle h,\nabla_{\lambda}\eta (x,\lambda;\mu,\rho)|\\ 
         = &\ 
         | \langle h, \nabla_\lambda \eta'(x,\lambda;\mu,\rho) \rangle |\\
         \overset{\text{(i)}}{\le } & 
        \sqrt{\frac{\rho \nu}{\mu}} \sqrt{h^{\top}\nabla_{\lambda}^2 \eta(x,\lambda;\mu,\rho)h}\\
        = &\ \xi(\mu) \left( \alpha(\mu) D^2_\lambda \eta(x,\lambda;\mu,\rho) [h,h] \right)^{\frac{1}{2}},
\end{aligned}
\end{equation}
and 
\begin{equation}
\begin{aligned}
        &\ | D_\lambda^2\eta'(x,\lambda;\mu,\rho)[h,h]- \{{\ln}\gamma(\mu)\}_{\mu}^{\prime}  D_\lambda^2\eta(x,\lambda;\mu,\rho)[h,h]|\\
         = &\  | D_\lambda^2\eta'(x,\lambda;\mu,\rho)[h,h] |\\
          \overset{\text{(ii)}}{\le} &\
        {\dfrac{1+2\sqrt{\nu}}{\mu}\left|h^{\top}\nabla_{\lambda}^2 \eta (x,\lambda;\mu,\rho)h\right|}\\
       = &\  2\zeta(\mu)D_{\lambda}^{2}\eta(x,\lambda;\mu,\rho)[h,h],
\end{aligned}
\end{equation}
where both (i) and (ii) follow directly from Theorem~\ref{inequality 14}. This completes the proof.
\end{proof}



\section{A Newton augmented Lagrangian method}\label{sec3}
In Section~\ref{sec2}, we presented several favorable properties of the novel augmented Lagrangian function $\eta(x,\lambda;\mu,\rho)$. Building on these results, we now propose a \nal (\NALMM) method based on Problem $(\operatorname{D})$ from \eqref{cone problem}. The convergence analysis and additional computational details are also presented in this section. 




The \NAL for SCP can be summarized in the following framework:

\begin{algorithm}[H]
    \caption{Framework of the NAL method for SCP}
	\begin{algorithmic}
		\State \textbf{Step 1: } Choose $x^{(0)}\in \J$, $\lambda^{(0)}\in\mathbb{R}^m$, $\mu^{(0)}>0$, $\rho^{(0)}>0$, $\sigma\in \left(0,1\right)$. Let $k:=0$.
		\State \textbf{Step 2: } Compute 
		\begin{equation}\label{eq subproblem}
			\lambda^{(k+1)} \thickapprox \argmin\limits_\lambda \eta(x^{(k)},\lambda;\mu^{(k)},\rho^{(k)}) 
		\end{equation}
		\State {\qquad\qquad \, }via Algorithm~\ref{alg4}.
		\State \textbf{Step 3: }  If $x^{(k)}$ is an approximate solution to (\ref{cone problem}), stop the algorithm. \State {\qquad\qquad \,  }Otherwise, go to Step 4.
		\State \textbf{Step 4: } Update $x^{(k+1)}=\frac{1}{\rho^{(k)}}z(x^{(k)},\lambda^{(k+1)};\mu^{(k)},\rho^{(k)})$, $\mu^{(k+1)}=\sigma\mu^{(k)}$, and   
        \State {\qquad\qquad \,  }  $\rho^{(k+1)}=\max\left\{ \frac{1}{2} \rho^{(k)},\rho_{\min} \right\}$. 
		\State \textbf{Step 5:  } Set $k:=k+1$ and go to Step 2. 
		
	\end{algorithmic}\label{algsbal}
\end{algorithm}

The stopping criterion for solving subproblem (\ref{eq subproblem}) is the same as in \cite{rockafellar1976augmented}. For a given sequence $\{ \epsilon^{(k)} \}$ 
with $\epsilon^{(k)}\ge 0$ and $\sum_{k=0}^\infty \epsilon^{(k)} < \infty$, we obtain $\lambda^{(k+1)}$ in Step 2 from Algorithm~\ref{algsbal} at the $k$-th iteration when
		\begin{equation}\label{stopping criteria}
			\eta (x^{(k)},\lambda^{(k+1)};\mu^{(k)},\rho^{(k)}) - 
			\inf\limits_{\lambda} \eta (x^{(k)},\lambda;\mu^{(k)},\rho^{(k)}) \le \frac{1}{2} \rho^{(k)} (\epsilon^{(k)})^2.
		\end{equation}
Note that, in the classical ALM, the stopping criterion  (\ref{stopping criteria}) is difficult to implement because $\inf_{\lambda} \eta (x^{(k)},\lambda;\mu^{(k)},\rho^{(k)})$ is generally unavailable. In contrast, the \NAL can satisfy this criterion numerically via a computable merit function (see Remark~\ref{inexact-lambda}), which makes it implementable in practice, unlike the classical ALM.


It can be observed that the \NAL shares many similarities with the classical ALM. The key distinction, however, is that the ALM does not include the smooth barrier term $\mu \phi(s)$. In this case, Rockafellar demonstrates in his well-known work \cite{rockafellar1976augmented} that for convex cone programming problems, the ALM can be viewed as a proximal point algorithm based on a monotone operator (see \cite[Section 4]{rockafellar1976augmented}). In contrast, the \NAL cannot be viewed as a proximal update of the same monotone operator, due to the presence of the smoothing barrier term. Nevertheless, if one treats the barrier parameter $\mu$ as an additional primal variable, then the \NAL admits an equivalent reformulation as a proximal point algorithm for a suitably defined monotone operator.


From (\ref{augmented Lagrangian trans}), the update of $x^{(k)}$ is equivalent to the proximal point of Problem $(\operatorname{P_{\mu^{(k)}}})$ when the subproblem of Step 2 in Algorithm~\ref{algsbal} is exactly computed. We also update $\mu$ simultaneously. To explain the algorithm, let $\mathcal{S}_k (x^{(k)})$ be the exact solution to 
\begin{equation}\label{proximal point to Pmu}
	\begin{array}{crllcl}
		&\min & \langle{c}, {x}\rangle + \mu^{(k)} \varnewphi(\frac{x}{\mu^{(k)}}) - \mu^{(k)}\nu + \frac{\rho^{(k)}}{2} \Vert x - x^{(k)}\Vert^2\\
		 &\quad \text { s.t.} & \mathcal{A} {x}={b}. \\
		
	\end{array}
\end{equation}
For simplicity, let $\hat{\mu}^{(k)} = \sqrt{{\mu}^{(k)}}$. Define the set-valued mapping $\mathcal{T}$ as 
\begin{equation}
    \mathcal{T} (x,\hat{\mu}): = 
    \left\{\begin{array}{cl}
        \left( c + \nabla \varnewphi ( \frac{x}{\hat{\mu}^2})+N_{\{x \mid \mathcal{A}x=b\}}(x), \hat{\mu}\right), & {\rm if}\ (x,\hat{\mu})\in {\rm int}\, (\K) \times \mathbb{R}_{++};\\
        \left\{c + N_{\K}(x)+N_{\{x \mid \mathcal{A}x=b\}}(x)\right\}\times \mathbb{R}_{-}, & {\rm if}\ (x,\hat{\mu})\in \K \times \{ 0\};\\
        \emptyset, & {\rm other}\ {\rm cases},
    \end{array}\right.
\end{equation}
where $N_{\{x \mid \mathcal{A}x=b\}}(x)$ denotes the normal cone to $\{x \mid \mathcal{A}x=b\}$ at $x$. Then the sequence generated by Algorithm~\ref{algsbal} is given by $\left\{ \left( \mathcal{S}_k(x^{(k)}),\mu^{(k)} \right) \right\}$, such that 
\begin{subequations}\label{augemnted lagrangian method}
	\begin{align}
		&\begin{pmatrix}
			0\\
			0
		\end{pmatrix}
		\in 
		\mathcal{T}(\mathcal{S}_k(x^{(k)}), \hat{\mu}^{(k)})+
		\begin{pmatrix}
			\rho^{(k)}\\
			\frac{\sqrt{\sigma}}{1-\sqrt{\sigma}}
		\end{pmatrix}
		\begin{pmatrix}
			\mathcal{S}_k(x^{(k)})-x^{(k)}\\
			 \hat{\mu}^{(k)}-\hat{\mu}^{(k-1)}
		\end{pmatrix},\\
		& \lambda^{(k+1)} \thickapprox \argmin_{\lambda}\eta(x^{(k)},\lambda;\mu^{(k)},\rho^{(k)})  \label{alm subproblem},\\
		&x^{(k+1)} = x^{(k)} + \dfrac{1}{\rho^{(k)}}(\mathcal{A}^{*} \lambda{^{(k+1)}}+s^{(k+1)}-c),\\
		&\mu^{(k+1)} = (\hat{\mu}^{(k+1)})^2 = \d \sigma (\hat{\mu}^{(k)})^2 = \sigma \mu^{(k)},\\
        & \rho^{(k+1)} = \max \left\{ \frac{1}{2} \rho^{(k)},\rho_{\min} \right\}.
        \end{align}
\end{subequations}

For any $\rho>0$ and $0<\sigma<1$, let $\mathcal{C}_{(\rho,\sigma)}$ be a linear operator from $\J \times \mathbb{R}$ to itself, satisfying 
$\mathcal{C}_{(\rho,\sigma)}(x,\hat{\mu}) = \left( \frac{1}{\rho} x, \frac{1-\sqrt{\sigma}}{\sqrt{\sigma}} \hat{\mu} \right)$.
We now show that under mild conditions, $\mathcal{C}_{(\rho,\sigma)}\mathcal{T}$ is a maximal monotone operator in a certain Hilbert space.

\begin{lemma}\label{maximal monotone operator omega}
	Suppose that $\phi$ is a natural barrier of $\K$. If there exists a constant $\omega>0$ such that 
    \begin{equation}\label{monotone}
      \omega > \frac{\sqrt{\sigma} \nu}{(1- \sqrt{\sigma})\rho },
    \end{equation}
    then $\mathcal{C}_{(\rho,\sigma)}\mathcal{T}$ is maximal monotone on $\J\times \mathbb{R}$, where $\J\times \mathbb{R}$ is endowed with the inner product $\langle \cdot, \cdot \rangle_\omega: \langle (x,\mu), (\tilde{x}, \tilde{\mu}) \rangle_\omega = \langle x, \tilde{x} \rangle+ \omega \langle \mu, \tilde{\mu} \rangle$.
\end{lemma}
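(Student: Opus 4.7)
The plan has two main ingredients: (i) an auxiliary bound controlling how $\nabla\varphi(x/\hat\mu^2)$ and $\hat\mu$ interact, which directly yields monotonicity, and (ii) Minty's theorem to upgrade monotonicity to maximality. For (i), I would first prove the inequality
\begin{equation*}
\langle \nabla\varphi(x_1/\hat\mu_1^2) - \nabla\varphi(x_2/\hat\mu_2^2),\,x_1 - x_2\rangle \;\ge\; -\nu(\hat\mu_1 - \hat\mu_2)^2
\end{equation*}
on $\interior(\K) \times \mathbb{R}_{++}$. Log-homogeneity of $\varphi$, namely $\nabla\varphi(x/\hat\mu^2) = \hat\mu^2\nabla\varphi(x)$ and $\langle \nabla\varphi(x), x\rangle = -\nu$, rewrites the left-hand side as $-\nu(\hat\mu_1^2 + \hat\mu_2^2) + \hat\mu_1^2 A + \hat\mu_2^2 B$ with $A := -\langle \nabla\varphi(x_1), x_2\rangle$ and $B := -\langle \nabla\varphi(x_2), x_1\rangle$, both strictly positive because $-\nabla\varphi$ maps $\interior(\K)$ into $\interior(\K^*) = \interior(\K)$ by self-duality. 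The decisive ingredient is the Cauchy--Schwarz-type bound $AB \ge \nu^2$ for $\nu$-LHSCBs of symmetric cones: this unifies the classical Cauchy--Schwarz inequality for $\mathbb{R}_+^n$, the trace inequality $\operatorname{tr}(X^{-1}Y)\operatorname{tr}(Y^{-1}X) \ge n^2$ for $\mathbb{S}_+^n$, and the reverse Cauchy--Schwarz inequality $(x_0 y_0 - \langle \bar x, \bar y\rangle)^2 \ge (x_0^2 - \|\bar x\|^2)(y_0^2 - \|\bar y\|^2)$ for $\mathcal{Q}_n$, and it can be derived uniformly via the Jordan-algebra spectral decomposition and AM--GM on eigenvalues. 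Once $AB \ge \nu^2$ is in hand, AM--GM yields $\hat\mu_1^2 A + \hat\mu_2^2 B \ge 2\hat\mu_1\hat\mu_2\sqrt{AB} \ge 2\nu\hat\mu_1\hat\mu_2$, and the inequality follows.

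The monotonicity check then becomes a direct computation. For selections $(y_i, z_i) \in \mathcal{T}(x_i, \hat\mu_i)$, the quantity $\langle \mathcal{C}_{(\rho,\sigma)}(y_1 - y_2,\,z_1 - z_2),\,(x_1 - x_2,\,\hat\mu_1 - \hat\mu_2)\rangle_\omega$ loses the constant $c$ by cancellation and loses the $N_{\{\mathcal{A}x = b\}}$ contribution because $x_1 - x_2 \in \ker\mathcal{A}$. In the fully interior case, the auxiliary inequality reduces the expression to at least $\bigl(\omega(1-\sqrt{\sigma})/\sqrt{\sigma} - \nu/\rho\bigr)(\hat\mu_1 - \hat\mu_2)^2$, which is non-negative exactly under (\ref{monotone}). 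For a mixed pair $\hat\mu_1 = 0 < \hat\mu_2$, \cref{limsup K} allows me to realize any $n_1 \in N_{\K}(x_1)$ as a limit of $\nabla\varphi(\tilde x/\tilde\mu)$ along an admissible sequence with $\tilde\mu \downarrow 0$; passing to the limit in the interior estimate and using $z_1 \le 0$, which forces $(z_1 - \hat\mu_2)(-\hat\mu_2) \ge \hat\mu_2^2$, recovers the same lower bound. The pure-boundary case $\hat\mu_1 = \hat\mu_2 = 0$ is immediate from monotonicity of $N_{\K} + N_{\{\mathcal{A}x = b\}}$.

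For maximality, Minty's theorem in the Hilbert space $(\J \times \mathbb{R},\,\langle \cdot,\cdot \rangle_\omega)$ reduces the task to showing that $I + \mathcal{C}_{(\rho,\sigma)}\mathcal{T}$ is surjective. For any target $(p,q)$, the second-coordinate equation uniquely determines $(\hat\mu, z)$---namely $\hat\mu = q\sqrt{\sigma}$ if $q > 0$, and $(\hat\mu, z) = (0,\,q\sqrt{\sigma}/(1-\sqrt{\sigma}))$ if $q \le 0$---after which the first-coordinate equation becomes the KKT system of the strictly convex problem
\begin{equation*}
\min_{\mathcal{A}x = b}\; \tfrac{\rho}{2}\|x\|^2 + \hat\mu^2 \varphi(x/\hat\mu^2) + \langle c - \rho p,\,x\rangle
\end{equation*}
(or its conic analogue $x \in \K$ when $\hat\mu = 0$); a unique solution exists by strict convexity, the coercivity supplied by the barrier or conic indicator, and the Slater assumption. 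The main obstacle I foresee is the Cauchy--Schwarz inequality $AB \ge \nu^2$ in the general symmetric-cone setting: the boundary bookkeeping and the Minty surjectivity argument are essentially routine once the key inequality is in hand, but that inequality itself relies non-trivially on the Euclidean Jordan algebra spectral theory together with self-duality of $\K$.
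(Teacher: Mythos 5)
Your proposal is correct, but the heart of it — the monotonicity estimate on $\operatorname{int}(\K)\times\mathbb{R}_{++}$ — goes by a genuinely different route than the paper. The paper argues locally: it symmetrizes the Jacobian of the barrier part $\mathcal{M}(x,\hat\mu)=\bigl(\tfrac{1}{\rho}\nabla\varphi(x/\hat\mu^2),\tfrac{1-\sqrt{\sigma}}{\sqrt{\sigma}}\hat\mu\bigr)$, completes the square using $\nabla^2\varphi(x/\hat\mu^2)x$, and reduces positive semidefiniteness to $\langle x,\nabla^2\varphi(x/\hat\mu^2)x\rangle=\nu\hat\mu^4$, invoking the mean value theorem to pass from the derivative condition to monotonicity; it needs only the standard LHSCB identities. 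You instead prove the global two-point inequality $\langle\nabla\varphi(x_1/\hat\mu_1^2)-\nabla\varphi(x_2/\hat\mu_2^2),x_1-x_2\rangle\ge-\nu(\hat\mu_1-\hat\mu_2)^2$ directly from log-homogeneity plus the Cauchy--Schwarz-type bound $AB\ge\nu^2$. That bound is indeed true, and you correctly flag it as the one nontrivial ingredient; besides your spectral-decomposition route for the canonical barrier, it follows for an arbitrary $\nu$-$\operatorname{LHSCB}$ from the conjugate-barrier inequality $\phi(x)+\phi^*(s)\ge-\nu+\nu\ln\nu-\nu\ln\langle x,s\rangle$ evaluated at $s=-\nabla\phi(y)$ and symmetrized, which would make your argument barrier-independent and derivative-free — arguably cleaner than the paper's, at the cost of one extra lemma. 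Your handling of the boundary cases via \cref{limsup K} and the observation $(z_1-\hat\mu_2)(-\hat\mu_2)\ge\hat\mu_2^2$ for $z_1\le 0$ matches the paper's \eqref{monotone operator}. For maximality the paper cites Lohne's characterization in the $x$-slice and dismisses the product-space surjectivity as "direct computations," whereas you make the resolvent explicit: the $\hat\mu$-equation decouples and the $x$-equation is the optimality system of a strongly convex barrier (or conic projection) problem solvable under Slater. Both are sound; yours is the more self-contained of the two, though you should note that the $\hat\mu=0$ branch needs the normal-cone sum rule $N_{\K\cap\{\mathcal{A}x=b\}}=N_{\K}+N_{\{\mathcal{A}x=b\}}$, which the Slater condition supplies.
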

\begin{proof}
	It follows from L\"{o}hne's characterization \cite{lohne2008characterization} that $\mathcal{C}_{(\rho,\sigma)}\mathcal{T}(\cdot,\hat{\mu})$ is a maximal monotone operator on $\J$ for any $\hat{\mu}\ge 0$. Thus, by Minty's characterization \cite{bauschke2017correction},  $\mathcal{C}_{(\rho,\sigma)}\mathcal{T}(\cdot,\hat{\mu})+ I_{\J}$ is surjective onto $\J$, where $I_{\J}$ is the identity function on ${\J}$. Then $\mathcal{C}_{(\rho,\sigma)}\mathcal{T}(\cdot,\cdot)+I_{\J\times \mathbb{R}}$ is surjective onto $\J\times \mathbb{R}$ by direct computation. So it suffices to prove that $\mathcal{C}_{(\rho,\sigma)}\mathcal{T}$ is monotone on $\J\times \mathbb{R}$ by Minty's characterization. 
    
    We next show that $\mathcal{C}_{(\rho,\sigma)}\mathcal{T}$ is monotone on ${\rm int}\, (\K) \times \mathbb{R}_{++}$ with inner product $\langle \cdot, \cdot \rangle_\omega$. Let 
    \begin{equation}
        \mathcal{M}(x,\hat{\mu}) := 
        \left\{\begin{array}{cl}
                \left(\frac{1}{\rho}\nabla \varnewphi (\frac{x}{\hat{\mu}^2}),\frac{1-\sqrt{\sigma}}{\sqrt{\sigma}}\hat{\mu}\right), & {\rm if}\ (x,\hat{\mu})\in {\rm int}\, (\K) \times \mathbb{R}_{++};\\
              N_{\K}(x) \times \mathbb{R}_-, & {\rm if}\ (x,\hat{\mu})\in \K \times \{ 0\};\\
              \emptyset, & {\rm other}\ {\rm cases}.
        \end{array}\right.
    \end{equation}
    By definition and the mean value theorem, $\mathcal{M}(\cdot,\cdot)$ is monotone on ${\rm int}\, (\K) \times \mathbb{R}_{++}$ with inner product $\langle \cdot, \cdot \rangle_\omega$ if, for all $\check{h}:=(h,h_0)\in \J \times \mathbb{R}$ and $(x,\hat{\mu}) \in {\rm int}\, (\K) \times \mathbb{R}_{++}$,
    \begin{equation}\label{monotone schur}
    \begin{aligned}
       &\ \langle \check{h}, \frac{1}{2}\left(D_{(x,\hat{\mu})} \mathcal{M}(x,\hat{\mu}) + (D_{(x,\hat{\mu})} \mathcal{M} (x,\hat{\mu}))^* \right) [\check{h}] \rangle_\omega \\
        =&\ \frac{1}{\rho \hat{\mu}^2} \langle h, \nabla^2 \varnewphi(\frac{x}{\hat{\mu}^2})h \rangle - \frac{2 h_0}{\rho \hat{\mu}^3} \langle h, \nabla^2 \varnewphi(\frac{x}{\hat{\mu}^2})x \rangle
       + \frac{1-\sqrt{\sigma}}{\sqrt{\sigma}} \omega h_0^2 \\
        =&\ \frac{1}{\rho \hat{\mu}^2} \langle h - \frac{h_0}{\hat{\mu}} x, \nabla^2 \varnewphi(\frac{x}{\hat{\mu}^2})( h- \frac{h_0}{\hat{\mu}} x)  \rangle+ h_0^2 \left(\omega \frac{1-\sqrt{\sigma}}{\sqrt{\sigma}} -\frac{1}{\rho \hat{\mu}^4} \langle x, \nabla^2 \varnewphi(\frac{x}{\hat{\mu}^2})x \rangle\right),\\
        \ge &\ 0,
    \end{aligned}
    \end{equation}
     where $(D_{(x,\hat{\mu})} \mathcal{M}(x,\hat{\mu}))^* $ is the adjoint operator of $D_{(x,\hat{\mu})} \mathcal{M}(x,\hat{\mu})$. Since
\begin{equation}
\omega \frac{1-\sqrt{\sigma}}{\sqrt{\sigma}} -\frac{1}{\rho \hat{\mu}^4} \langle x, \nabla^2 \varnewphi(\frac{x}{\hat{\mu}^2})x \rangle = \omega \frac{1-\sqrt{\sigma}}{\sqrt{\sigma}} - \frac{\nu}{\rho},
\end{equation}
then the monotonicity holds if 
\begin{equation}
    \omega > \frac{\sqrt{\sigma} \nu}{(1- \sqrt{\sigma}) \rho }.
\end{equation}

To extend monotonicity on $\J\times \mathbb{R}$, it suffices to verify the case when $(x,\hat{\mu})\in \K \times \{ 0\}$ and $(\tilde{x},\tilde{\mu})\in {\rm int}\, (\K) \times \mathbb{R}_{++}$, for any $(y,\tau)\in  \mathcal{M}(x,\hat{\mu})$, $(\tilde{y},\tilde{\tau})\in \mathcal{M}(\tilde{x},\tilde{\mu})$ with $\tau\le 0$ and $\tilde{\tau}> 0$,
$$
    \langle y-\tilde{y}, x-\tilde{x} \rangle+ \omega (\tau -\tilde{\tau})(\hat{\mu}-\tilde{\mu}) \ge 0. 
$$
By Lemma~\ref{limsup K} and the monotonicity of $\mathcal{M}(\cdot,\cdot)$ on ${\rm int}\, (\K) \times \mathbb{R}_{++}$, there exists a sequence $(x^{(i)},\hat{\mu}^{(i)}) \to (x,0)$ with $x^{(i)} \in \interior\, (\K) $ and $\hat{\mu}^{(i)}>0$ such that 
\begin{equation}
    \langle y^{(i)} - \tilde{y}, x^{(i)} - \tilde{x} \rangle + \omega  \frac{1-\sqrt{\sigma}}{\sqrt{\sigma}} (\hat{\mu}^{(i)}- \tilde{\mu})^2 \ge 0,
\end{equation}
where $y^{(i)} = \nabla \varnewphi (\frac{x^{(i)}}{(\hat{\mu}^{(i)})^2})\rightarrow y$.  Taking the limit $i \rightarrow \infty$ yields 
\begin{subequations}\label{monotone operator}
    \begin{align}
        0 & \le  \langle y-\tilde{y},x-\tilde{x} \rangle + \omega \frac{1-\sqrt{\sigma}}{\sqrt{\sigma}} \tilde{\mu}^2\\
         & \le  \langle y-\tilde{y},x-\tilde{x} \rangle + \omega (\frac{1-\sqrt{\sigma}}{\sqrt{\sigma}} \tilde{\mu}-\tau)\tilde{\mu}\\
        & = \langle y-\tilde{y},x-\tilde{x} \rangle + \omega \langle \tau-\tilde{\tau}, 
        0 - \tilde{\mu} \rangle.
    \end{align}
\end{subequations}
Inequality (\ref{monotone operator}b) holds for $\tau\le 0$, and (\ref{monotone operator}c) is the result of $\tilde{\tau} = \frac{1-\sqrt{\sigma}}{\sqrt{\sigma}}\tilde{\mu}$. The set-valued mapping $\mathcal{M}(\cdot,\cdot)$ is then monotone on $\J\times \mathbb{R}$. From the additivity of monotone operators, we obtain that
$\mathcal{C}_{(\rho,\sigma)}\mathcal{T}$ is a maximal monotone operator on $\J\times \mathbb{R}$. This completes the proof.
\end{proof}

\begin{theorem}\label{convergence-x}
    Let $\phi$ be the natural barrier of $\K$. Suppose that $0<\sigma<1$, $\rho^{(k)}$ is bounded from below, and the subproblem at each iteration is executed with the stopping criterion (\ref{stopping criteria}). Then the following conclusions hold:
		\begin{enumerate}
			\item[(i)] The sequence $\{x^{(k)}\}_{k=1}^\infty$ generated by Algorithm
			\ref{algsbal} converges to $\bar{x}$, where $\bar{x}$ is an optimal solution to Problem $(\operatorname{P})$;
			\item[(ii)] The KKT residual norm
			\begin{equation}\label{KKT-residual}
				\max\left\{ \Vert\mathcal{A}^*\lambda^{(k)}+s^{(k)}-c\Vert,\, \Vert \mathcal{A}x^{(k)}-b \Vert,\, \Vert x^{(k)}\circ s^{(k)} \Vert \right\} \rightarrow 0.
			\end{equation}
		\end{enumerate}
\end{theorem}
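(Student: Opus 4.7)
My plan is to read Algorithm \ref{algsbal} as an inexact proximal point algorithm on the maximal monotone operator $\mathcal{C}_{(\rho,\sigma)}\mathcal{T}$ of Lemma \ref{maximal monotone operator omega} and then invoke Rockafellar's classical convergence theory \cite{rockafellar1976monotone}. Since $\rho^{(k)}$ is bounded from below, a single $\omega > \frac{\sqrt{\sigma}\nu}{(1-\sqrt{\sigma})\inf_{k}\rho^{(k)}}$ makes $\mathcal{C}_{(\rho^{(k)},\sigma)}\mathcal{T}$ uniformly maximal monotone on $(\J\times\mathbb{R},\langle\cdot,\cdot\rangle_\omega)$. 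When the dual subproblem (\ref{eq subproblem}) is solved exactly, the derivation leading to (\ref{augemnted lagrangian method}) shows that $(x^{(k+1)},\hat{\mu}^{(k+1)})$ is precisely the proximal point of $(x^{(k)},\hat{\mu}^{(k)})$ under this operator, the $\hat{\mu}$-coordinate being enforced automatically by $\hat{\mu}^{(k+1)}=\sqrt{\sigma}\hat{\mu}^{(k)}$.

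The first concrete step is to convert the function-value stopping criterion (\ref{stopping criteria}) into a Rockafellar-type displacement criterion. Using the strict convexity of $\eta(x^{(k)},\cdot;\mu^{(k)},\rho^{(k)})$ from Theorem \ref{gradent and hessian of eta} together with the gradient formula \eqref{gradient of Ly2} and the definition \eqref{equation z} of $z$, a quadratic lower bound on $\eta-\inf\eta$ transfers the function-value error of order $\tfrac12\rho^{(k)}(\varepsilon^{(k)})^2$ into a bound $\|x^{(k+1)}-\bar{x}^{(k+1)}\|\le\varepsilon^{(k)}$ on the deviation from the exact proximal iterate $\bar{x}^{(k+1)}$. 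Since $\sum_{k}\varepsilon^{(k)}<\infty$, Rockafellar's criterion~(A) is met, and his theorem yields strong convergence $(x^{(k)},\hat{\mu}^{(k)})\to(\bar{x},0)$ to a zero of $\mathcal{C}_{(\rho^{(k)},\sigma)}\mathcal{T}$. By the definition of $\mathcal{T}$ on $\mathcal{K}\times\{0\}$ this reads $0\in c+N_{\mathcal{K}}(\bar{x})+N_{\{x\mid\mathcal{A}x=b\}}(\bar{x})$, and combined with the Slater condition this produces multipliers $(\bar{\lambda},\bar{s})$ satisfying the KKT system for $(\operatorname{P})$, proving (i).

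For (ii), the three residuals are handled separately. Primal feasibility $\|\mathcal{A}x^{(k)}-b\|\to 0$ follows from $x^{(k)}\to\bar{x}$ with $\mathcal{A}\bar{x}=b$, since the proximal subproblem \eqref{proximal point to Pmu} enforces the linear constraint and the $\varepsilon^{(k)}$-error goes to zero. Complementary slackness is the cleanest: from \eqref{equation z} and $x^{(k+1)}=z^{(k+1)}/\rho^{(k)}$, Lemma \ref{center path} gives $\langle s^{(k+1)},x^{(k+1)}\rangle=\mu^{(k)}\nu\to 0$; since $x^{(k+1)},s^{(k+1)}\in\operatorname{int}(\mathcal{K})$, the standard inequality $\|x\circ s\|\le\langle x,s\rangle$ on symmetric cones forces $\|x^{(k+1)}\circ s^{(k+1)}\|\to 0$. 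Dual feasibility uses the identity $\mathcal{A}^{*}\lambda^{(k+1)}+s^{(k+1)}-c=\rho^{(k)}(x^{(k+1)}-x^{(k)})$ from \eqref{equation z} together with the additional conclusion of Rockafellar's theorem that the residual sequence $\rho^{(k)}(x^{(k+1)}-x^{(k)})$ converges strongly to zero under criterion~(A).

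The main obstacle is converting the function-value inexactness (\ref{stopping criteria}) into a uniform displacement bound compatible with Rockafellar's framework, because the self-concordance constant in Theorem \ref{self concordant} is $\rho^{(k)}\mu^{(k)}$ and degenerates as $\mu^{(k)}\downarrow 0$. To manage this, I would track the Hessian lower bound via \eqref{hessian of Ly2} and the surjectivity of $\mathcal{A}$, rather than rely on the self-concordance constant alone, thereby obtaining a modulus of strong convexity that is uniformly controlled by the problem data and by $\inf_k\rho^{(k)}>0$; this is what ultimately allows the summable-error inexactness to be verified uniformly across outer iterations.
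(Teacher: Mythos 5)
Your overall route is exactly the paper's: the paper's proof of Theorem \ref{convergence-x} is a one-line appeal to the maximal monotonicity of $\mathcal{C}_{(\rho,\sigma)}\mathcal{T}$ (Lemma \ref{maximal monotone operator omega}) together with Rockafellar's inexact proximal point theory, and your proposal is a fleshed-out version of that same argument, including the translation of the function-value criterion \eqref{stopping criteria} into Rockafellar's summable-displacement criterion and the identity $\mathcal{A}^*\lambda^{(k+1)}+s^{(k+1)}-c=\rho^{(k)}(x^{(k+1)}-x^{(k)})$ for the dual residual. On those points you supply more detail than the paper does, and the details are sound.

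There is, however, one concretely false step in your treatment of the complementarity residual: the inequality $\Vert x\circ s\Vert\le\langle x,s\rangle$ is not a valid "standard inequality on symmetric cones." It holds for the nonnegative orthant, but it fails for the second-order cone: take $x=(1;1,0)$ and $s=(1;0,1)$ in $\mathcal{Q}_2$; then $x\circ s=(1;1,1)$ has eigenvalues $1\pm\sqrt{2}$ (so $x\circ s\notin\mathcal{K}$ even though $x,s\in\mathcal{K}$), and $\Vert x\circ s\Vert>\langle x,s\rangle$ in either the Euclidean or the trace inner product. So you cannot pass from $\langle x^{(k)},s^{(k)}\rangle=\mu^{(k-1)}\nu\to 0$ to $\Vert x^{(k)}\circ s^{(k)}\Vert\to 0$ this way. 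The correct and in fact stronger argument uses the algebraic structure the algorithm provides: by \eqref{quadratic function} and \eqref{equation z}, $z^{(k)}=-\rho^{(k-1)}\mu^{(k-1)}\nabla_s\phi(s^{(k)})$, and for the canonical barrier of a symmetric cone $\nabla_s\phi(s)=-s^{-1}$, so $x^{(k)}\circ s^{(k)}=\frac{1}{\rho^{(k-1)}}z^{(k)}\circ s^{(k)}=\mu^{(k-1)}\,e$, whence $\Vert x^{(k)}\circ s^{(k)}\Vert=\mu^{(k-1)}\Vert e\Vert\to 0$ directly. With that repair (or, alternatively, by combining $\langle x^{(k)},s^{(k)}\rangle\to 0$ with boundedness of the iterates and a correct operator-norm bound on $\circ$), your argument for part (ii) goes through, and the rest of your proposal matches the paper's intended proof.
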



\begin{proof}
    By the maximal monotonicity of $\mathcal{C}_{(\rho,\sigma)} \mathcal{T}$ and following Rockafellar's results in \cite{rockafellar1976monotone}, it is straightforward to establish this theorem.
\end{proof}

In the following, we present some computational details on the subproblem. It can be observed that the main computational effort of the \NAL lies in solving subproblem~\eqref{eq  subproblem}. We employ the Newton method to solve this subproblem. Specially, the search direction $\Delta\lambda$ is obtained by solving the following equation: 
\begin{equation}\label{Newton direction}
	\nabla_{\lambda }^2 \eta(x,\lambda;\mu,\rho)\Delta\lambda=-\nabla_{ \lambda} \eta(x,\lambda;\mu,\rho).
\end{equation}


 To determine $\nabla_{\lambda }^2 \eta(x,\lambda;\mu,\rho)$ and $\nabla_{ \lambda} \eta(x,\lambda;\mu,\rho)$, we need the explicit expressions of $s(x,\lambda;\mu,\rho)$ and $z(x,\lambda;\mu,\rho)$, given that $\nabla_{ \lambda} \eta(x,\lambda;\mu,\rho)$ depends on $z(x,\lambda;\mu,\rho)$ from Theorem~\ref{gradent and hessian of eta}. Recall that 
\begin{equation}
		s(x,\lambda;\mu,\rho)=\frac{1}{2}\left(\left((\rho x-c+\mathcal{A}^{*}\lambda)^2+4\rho\mu e\right)^{\frac{1}{2}}-(\rho x-c+\mathcal{A}^{*}\lambda)\right),
\end{equation}
 By definition~\eqref{equation z},
 \begin{equation}\label{Eq:function z}
 		z(x,\lambda;\mu,\rho)=\frac{1}{2}\left(\left((\rho x-c+\mathcal{A}^{*}\lambda)^2+4\rho\mu e\right)^{\frac{1}{2}}+(\rho x-c+\mathcal{A}^{*}\lambda)\right).
 \end{equation}
 It is straightforward to verify that $s:=s(x,\lambda;\mu,\rho)$ and $z:=z(x,\lambda;\mu,\rho)$ lie in $\interior\,({\K})$, and share a common Jordan frame by using the spectral decomposition of $s$ and $z$. With these properties, we now derive the explicit expression for the Hessian $\nabla_{\lambda }^2 \eta (x,\lambda;\mu,\rho)$.
\begin{theorem}\label{Theorem:Hessian of SCMs}
    Given the current iterates $s$ and $z$, the Hessian $\nabla_{\lambda }^2 \eta (x,\lambda;\mu,\rho)$ takes the form $$\nabla_{\lambda }^2 \eta (x,\lambda;\mu,\rho) = \A\L(z)(\L(z+s))^{-1}\A^*.
    $$
\end{theorem}
\begin{proof}
Invoking Proposition~\ref{Prop:property-of-p-and-l} and \eqref{eq:center path b}, we obtain  $\P(s) \L(z) = \rho \mu \L(s)$.  
Next, adding $\rho\mu\L(z)$
 to both sides yields 
 \begin{equation}\label{eq:theorem 4.3(1)}
 (\rho\mu I+\P(s)) \L(z)=\rho\mu\L(z+s).
 \end{equation}
 Define $u:=z+s=((\rho x-c+\A^*\lambda)^2+4\rho\mu e)^{\frac{1}{2}}$. By its spectral decomposition, we have $u \in \interior{(\K)}$. 
 Since both $u\in\interior\,(\K)$ and $z\in\interior\,(\K)$, Corollary~\ref{Cor:invertible} implies that $\L(z)$ and $\L(z+s)$ are invertible. Hence, the identity~\eqref{eq:theorem 4.3(1)} can be rewritten in the form $ \rho \mu I+\P(s)= \rho \mu \L(z + s) (\L(z))^{-1}$.
 From the definitions of $W$ and $H$, together with \eqref{eq:derivatives of phi}, we obtain $$WH^{-1}=\rho \mu \P(s)^{-1} \big( \rho \mu \P(s)^{-1} + I \big)^{-1}= \rho \mu \big( \rho \mu I+ \P(s) \big)^{-1}=\L(z)(\L(z+s))^{-1}.$$
 It follows that 
 $$
 \nabla_{\lambda }^2 \eta(x,\lambda;\mu,\rho) =\A WH^{-1}\A^*= \A\L(z)(\L(z+s))^{-1}\A^*,
 $$
 which completes the proof.
 \end{proof}

\begin{remark}\label{condition number remark}
To illustrate the advantage of the \NAL in terms of the condition numbers of its SCMs compared to those from classical IPMs, we use the nonnegative orthant as an example. By Theorem~\ref{Theorem:Hessian of SCMs}, the Hessian $\nabla_{\lambda }^2 \eta(x,\lambda;\mu,\rho)$ in this case takes the form $\A\Diag(z)(\Diag(z+s))^{-1}\A^*$. The diagonal matrix $\Diag(z)(\Diag(z+s))^{-1}$ produced by the \NAL satisfies that each diagonal element lies strictly in the interval $(0, 1)$. In contrast, the corresponding diagonal elements in the classical IPMs are typically in the range \( (0, \infty) \) \cite{nocedal1999numerical}. Consequently, the condition numbers of the SCMs in the \NAL are generally better than those in classical IPMs, which constitutes a significant advantage.
 Furthermore, as shown in Theorem~\ref{thm:condition number}, the condition number of the SCMs in the \NAL is of order $\mathcal{O}(1/{\mu})$, whereas classical IPMs generally yield SCMs whose condition numbers scale as $\mathcal{O}(1/{\mu^2})$ \cite[Lemma 4.1]{gondzio2012matrix}. This improvement is also demonstrated via a heatmap in our numerical experiments, providing a more intuitive comparison.
\end{remark}



\begin{theorem}\label{thm:condition number}
Suppose  $\{ \rho^{(k)}\}$ is bounded from below. Then the condition number of the  Hessian $\nabla^2_{\lambda} \eta(x,\lambda;\mu,\rho)$ 
 satisfies
$$\cond\left(\nabla^2_{\lambda}\eta (x,\lambda;\mu,\rho)\right)=\mathcal{O}\!\left(1/{\mu}\right).$$
\end{theorem}
\begin{proof}
The proof can be found in Appendix~\ref{Appendix:proof of thm:condition number}.
\end{proof}

Subsequently, we define a merit function to determine the step length. The merit function $\delta(x,\lambda;\mu,\rho)$ is formulated using the second-order derivative of $\eta(x,\lambda;\mu,\rho)$, as defined below:
\begin{equation}\label{merit function 1}
	\delta(x,\lambda;\mu,\rho):=\sqrt{\dfrac{1}{\rho\mu}\Delta \lambda^{\top} \nabla_{\lambda}^2 \eta(x,\lambda;\mu,\rho)\Delta \lambda}.
\end{equation}

Let $\lambda(x;\mu,\rho):=\argmin_{\lambda}\eta(x,\lambda;\mu,\rho)$ and  $\widetilde{\Delta\lambda}:=\lambda-\lambda(x;\mu,\rho)$. Then we define
\begin{equation}\label{merit function 2} 
	\widetilde{\delta}(x,\lambda;\mu,\rho):=\sqrt{\dfrac{1}{\rho\mu}\widetilde{\Delta\lambda}^{\top}\nabla_{\lambda     }^2\eta(x,\lambda;\mu,\rho)\widetilde{\Delta\lambda}}. 
\end{equation}
Both $\delta(x,\lambda;\mu,\rho)$
and $\widetilde{\delta}(x,\lambda;\mu,\rho)$
play important roles in the complexity analysis of the algorithm.

\begin{remark}\label{delta-estimate}
	Notice that 
	\begin{equation}
		\delta(x,\lambda;\mu,\rho)=\sqrt{\dfrac{1}{\rho\mu}}\left\|\mathcal{A}z(x,\lambda;\mu,\rho)-\rho b\right\|_{\eta(x,\lambda;\mu,\rho)}^*,
	\end{equation}
	where $\left\|h\right\|_{\eta(x,\lambda;\mu,\rho)}^*     =\sqrt{h^{\top}(\nabla_{\lambda}^2 \eta(x,\lambda;\mu,\rho))^{-1}h}$, which has wide applications in path-following IPMs \cite{nesterov1998primal,nesterov1999infeasible,skajaa2015homogeneous}. If $\lambda=\lambda(x;\mu,\rho)$, then $\delta(x,\lambda;\mu,\rho)=\widetilde{\delta}(x,\lambda;\mu,\rho)=0.$ The set
	\begin{equation}\label{NeighborhoodofTrajectory}
		\begin{aligned}
			\mathcal{N}(\kappa)&=\left\{(x,\lambda,\mu,\rho) \mid \delta(x,\lambda;\mu,\rho)\leq \kappa\right\}\\
			&=\left\{(x,\lambda,\mu,\rho) \mid \left\|\mathcal{A}z(x,\lambda;\mu,\rho)-\rho b\right\|_{\eta(x,\lambda;\mu,\rho)}^*\leq{\kappa\sqrt{\rho\mu}}\right\}
		\end{aligned}
	\end{equation}
	is a neighborhood of the trajectory $\mathcal{N}(0)=\left\{(x,\lambda(x;\mu,\rho),\mu,\rho) \mid \mu>0,\rho>0\right\}$. Typically, $\kappa$ is set to $\frac{1}{4}$.
\end{remark}

We can now present the Newton method for solving the subproblem~\eqref{eq subproblem} in Algorithm~\ref{algsbal}.
\begin{algorithm}[H]
	\caption{ Newton method for \eqref{eq subproblem} at the $k$-th iteration in Algorithm~\ref{algsbal}}
	\begin{algorithmic}
		\State \textbf{Step 1: }  Input $x^{(k)}\in \J$, $\rho^{(k)}$, $\mu^{(k)}>0$ and initialize 
        $\lambda^{(k,0)}=\lambda^{(k)}$.  Let $j:=0$.
        \State \textbf{Step 2: }  Compute the search direction $\Delta\lambda^{(k,j)}$  using \eqref{Newton direction}.
        \State \textbf{Step 3: }  Let $\delta^{(k,j)}:=\delta(x^{(k)},\lambda^{(k,j)};\mu^{(k)},\rho^{(k)})$ and  $\hat{\kappa}: =  \dfrac{1}{\sqrt{\rho^{(k)} \mu^{(k)}} \Vert \lambda^{(k,j)} \Vert_2} $. 
        \State {\qquad\qquad \, }If $k=0$ and $\delta^{(k,j)}\le \kappa$, or  $\delta^{(k,j)}\leq \min \left\{\kappa, \hat{\kappa} \right\}$ for $k>0$, stop the  \State {\qquad\qquad \, }algorithm and return $\lambda^{(k+1)}=\lambda^{(k,j)}$.
        \State \textbf{Step 4: } Update $\lambda^{(k,j+1)}=\lambda^{(k,j)}+{\alpha}^{(k,j)}\Delta\lambda^{(k,j)} $, where $$ {\alpha}^{(k,j)}=
			\begin{cases}
				\quad\ \, 1, & \text{if $\delta^{(k,j)}< 2-\sqrt{3}$;} \\
				\dfrac{1}{1+\delta^{(k,j)}}, 
				& \text{if $\delta^{(k,j)}\geq 2-\sqrt{3}$.}
		\end{cases} $$ 
        \State \textbf{Step 5: }  Set $j:=j+1$ and go to Step 2.
  \end{algorithmic}\label{alg4}
\end{algorithm}

\section{Complexity analysis}\label{sec4}
This section establishes an $\mathcal{O}\left(1/{\epsilon}\right)$  iteration complexity of the \NAL for solving SCP.  We begin by drawing some results of the self-concordance property from \cite{nesterov1994interior} for the estimate of $\eta$.

The update rule
$
\rho^{(k+1)}=\max\{\tfrac12\,\rho^{(k)},\,\rho_{\min}\}
$ in Algorithm~\ref{algsbal}
implies that $\rho^{(k)}$ decreases geometrically until it reaches $\rho_{\min}$, and then stays unchanged. In particular, the number of outer iterations needed for $\rho^{(k)}$ to hit $\rho_{\min}$ is bounded by
\[
k_\rho \;\le\; \left\lceil \log_2\!\Bigl(\frac{\rho^{(0)}}{\rho_{\min}}\Bigr)\right\rceil .
\]
Hence, if Algorithm~\ref{algsbal} terminates with $\rho^{(k)}>\rho_{\min}$, the outer loop has executed at most a constant number of iterations. If it terminates with $\rho^{(k)}=\rho_{\min}$, then after at most $k_\rho$ outer iterations we enter the regime $\rho^{(k)}\equiv \rho_{\min}$, and the outer-loop complexity is determined entirely by the number of iterations in this regime. Therefore, for notational simplicity, we assume that $\rho^{(k)}$ remains unchanged and denote it simply by $\rho$.
\begin{lemma}\label{inequality 2025}
Given any $\mu>0$,  $\rho>0$, and $\Delta \lambda\in\mathbb{R}^m$, let $\delta :=\delta(x,\lambda;\mu,\rho)$ be defined in \eqref{merit function 1}. For any $\tau \in [0, 1]$ such that $\tau \delta < 1$, and for any $ h, h_1, h_2 \in \mathbb{R}^m$, the following inequalities hold:
\begin{subequations}
\begin{align}
\begin{aligned}\label{inequality 15}
(1-\tau\delta)^2h^{\top}\nabla^2_{\lambda}\eta(x,\lambda;\mu,\rho)h\leq h^{\top}\nabla^2_{\lambda}\eta(x,\lambda&+\tau\Delta \lambda;\mu,\rho)h\\
&\leq(1-\tau\delta)^{-2}h^{\top}\nabla_{\lambda}^2\eta(x,\lambda;\mu,\rho)h,\end{aligned}\\
\begin{aligned}\label{inequality 2}|h_1^{\top}[\nabla^2_{\lambda}\eta(x,\lambda&+\tau\Delta \lambda;\mu,\rho)-\nabla^2_{\lambda}\eta(x,\lambda;\mu,\rho)]h_2|\\&\leq[(1-\tau\delta)^{-2}-1]\sqrt{h_1^{\top}\nabla^2_{\lambda}\eta(x,\lambda;\mu,\rho)h_1}\sqrt{h_2^{\top}\nabla^2_{\lambda}\eta(x,\lambda;\mu,\rho)h_2}.\end{aligned}
\end{align}
\end{subequations}
\end{lemma}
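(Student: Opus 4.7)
The plan is to reduce both inequalities to standard self-concordance estimates from Nesterov's book and then rescale. By Theorem \ref{self concordant}, for any fixed $x,\mu,\rho$ with $\rho\mu<1$, the function $\eta(x,\cdot;\mu,\rho)$ is $\rho\mu$-self-concordant. First I would introduce the rescaled function $\tilde{\eta}(\cdot):=(\rho\mu)^{-1}\eta(x,\cdot;\mu,\rho)$ and verify directly from the defining inequality that $\tilde{\eta}$ is standard self-concordant, i.e., satisfies $|D^3\tilde{\eta}[h,h,h]|\le 2(D^2\tilde{\eta}[h,h])^{3/2}$. The algebra is a one-line check since both sides of the defining inequality scale by $(\rho\mu)^{-1}$ after inserting $D^2\tilde{\eta}=(\rho\mu)^{-1}D^2\eta$.

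Next I would compute the Dikin-type radius of the step $\tau\Delta\lambda$ under $\tilde{\eta}$. From the definition \eqref{merit function 1} of $\delta$,
\begin{equation*}
\tau\Delta\lambda^{\top}\nabla^2\tilde{\eta}(\lambda)\,\tau\Delta\lambda
=\tau^{2}(\rho\mu)^{-1}\Delta\lambda^{\top}\nabla_{\lambda}^{2}\eta(x,\lambda;\mu,\rho)\Delta\lambda
=\tau^{2}\delta^{2},
\end{equation*}
so the local norm $\|\tau\Delta\lambda\|_{\nabla^2\tilde{\eta}(\lambda)}=\tau\delta$, which is strictly less than $1$ by hypothesis. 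Then the standard Hessian-comparison inequality for standard self-concordant functions (Theorem 5.1.7 in \cite{nesterov2018lectures}, or equivalently Theorem 2.1.1 of \cite{nesterov1994interior}) yields
\begin{equation*}
(1-\tau\delta)^{2}\,\nabla^{2}\tilde{\eta}(\lambda)\preceq\nabla^{2}\tilde{\eta}(\lambda+\tau\Delta\lambda)\preceq(1-\tau\delta)^{-2}\,\nabla^{2}\tilde{\eta}(\lambda).
\end{equation*}
Multiplying throughout by $\rho\mu$ restores $\nabla^{2}_{\lambda}\eta$ on both sides and yields \eqref{inequality 15}.

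For the second inequality, I would subtract $\nabla^{2}\tilde{\eta}(\lambda)$ from the above operator inequality to obtain
\begin{equation*}
[(1-\tau\delta)^{2}-1]\,\nabla^{2}\tilde{\eta}(\lambda)\preceq\nabla^{2}\tilde{\eta}(\lambda+\tau\Delta\lambda)-\nabla^{2}\tilde{\eta}(\lambda)\preceq[(1-\tau\delta)^{-2}-1]\,\nabla^{2}\tilde{\eta}(\lambda),
\end{equation*}
then observe that $|(1-\tau\delta)^{2}-1|=1-(1-\tau\delta)^{2}\le(1-\tau\delta)^{-2}-1$ for $\tau\delta\in[0,1)$ by the elementary identity $(u-1)^{2}\ge 0$ with $u=(1-\tau\delta)^{2}$. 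Therefore the operator norm of the Hessian difference with respect to the inner product induced by $\nabla^{2}\tilde{\eta}(\lambda)$ is bounded by $(1-\tau\delta)^{-2}-1$, and applying the Cauchy--Schwarz inequality in that weighted inner product gives \eqref{inequality 2} after rescaling back to $\eta$. The only nontrivial step is the first one (the rescaling identification), and I expect it to be routine; the remainder is a direct citation of \cite{nesterov2018lectures} plus an elementary bound.
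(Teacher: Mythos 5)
Your proposal is correct and matches the paper's approach: the paper gives no written proof of this lemma, simply importing it from the standard self-concordance theory of Nesterov--Nemirovski, and your argument (normalize by $\rho\mu$ to get a standard self-concordant function, note that the local norm of $\tau\Delta\lambda$ is exactly $\tau\delta<1$ by \eqref{merit function 1}, invoke the Hessian-comparison theorem, and recover \eqref{inequality 2} via the weighted Cauchy--Schwarz bound) is precisely the routine derivation behind that citation. All the scaling factors check out, so nothing further is needed.
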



Lemma~\ref{inequality 11} provides an estimate of the reduction in the merit function during each Newton iteration, as derived from \cite[Theorem 2.2.2, Theorem 2.2.3]{nesterov1994interior}. 
\begin{lemma}\label{inequality 11}
	Given any $\mu>0$ and $\rho>0$, let $\delta:=\delta(x,\lambda;\mu,\rho)$ and $\widetilde{\delta}:= \widetilde{\delta}(x,\lambda;\mu,\rho)$ be defined in \eqref{merit function 1} and \eqref{merit function 2}. The following inequalities hold: 
	\begin{enumerate}
		\item[(i)] If $\delta > 2-\sqrt{3}$, then
		\begin{equation}
			\eta(x,\lambda+\dfrac{1}{1+\delta}\Delta \lambda;\mu,\rho)\leq\eta(x,\lambda;\mu,\rho)-\alpha(\mu)\left(\delta-\ln(1+\delta)\right).
		\end{equation} 
		\item[(ii)] If $\delta\leq2-\sqrt{3}$, then
		\begin{equation}
			\delta(x,\lambda+\Delta\lambda;\mu,\rho)\leq \left(\dfrac{\delta}{1-\delta}\right)^2\leq\dfrac{\delta}{2}.
		\end{equation}
		\item[(iii)]If $\delta<\dfrac{1}{3}$, then $\widetilde{\delta}<1-\left(1-3\delta\right)^{\frac{1}{3}}$.
	\end{enumerate}
\end{lemma}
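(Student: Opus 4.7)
The plan is to reduce each of the three claims to the classical self-concordance analysis of Nesterov and Nemirovski, applied to $\eta(x,\cdot;\mu,\rho)$ along its Newton direction. By Theorem \ref{self concordant}, this function is $\alpha(\mu)$-self-concordant with $\alpha(\mu)=\rho\mu$, so after the rescaling $\tilde{\eta}:=\alpha(\mu)^{-1}\eta(x,\cdot;\mu,\rho)$ we may treat $\tilde{\eta}$ as a standard self-concordant function, and the merit function from \eqref{merit function 1} is precisely its Newton decrement: $\delta=\sqrt{\nabla\tilde{\eta}^{\top}(\nabla^{2}\tilde{\eta})^{-1}\nabla\tilde{\eta}}=\sqrt{\alpha(\mu)^{-1}\Delta\lambda^{\top}\nabla_{\lambda}^{2}\eta\,\Delta\lambda}$. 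This identification is what links the statement to Theorems 2.2.2--2.2.3 in \cite{nesterov1994interior}.

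For part (i), I will integrate the self-concordance bound of Lemma \ref{inequality 2025}. Setting $\theta(\tau):=\eta(x,\lambda+\tau\Delta\lambda;\mu,\rho)$, the second derivative satisfies $\theta''(\tau)=D^{2}_{\lambda}\eta(x,\lambda+\tau\Delta\lambda;\mu,\rho)[\Delta\lambda,\Delta\lambda]\leq (1-\tau\delta)^{-2}\,\alpha(\mu)\delta^{2}$ by \eqref{inequality 15}. Since $\Delta\lambda$ solves \eqref{Newton direction}, we have $\theta'(0)=-\alpha(\mu)\delta^{2}$. Integrating twice from $0$ to $\tau_{\star}=\tfrac{1}{1+\delta}$ yields
\begin{equation*}
\theta(\tau_{\star})-\theta(0)\leq -\alpha(\mu)\delta^{2}\tau_{\star}+\alpha(\mu)\delta^{2}\int_{0}^{\tau_{\star}}\!\!\int_{0}^{u}(1-t\delta)^{-2}\,dt\,du=-\alpha(\mu)\bigl(\delta-\ln(1+\delta)\bigr),
\end{equation*}
which is exactly the required descent.

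For part (ii), I would express the gradient after the full Newton step using the Newton equation $\nabla_{\lambda}\eta(x,\lambda)+\nabla_{\lambda}^{2}\eta(x,\lambda)\Delta\lambda=0$, writing
\begin{equation*}
\nabla_{\lambda}\eta(x,\lambda+\Delta\lambda)=\int_{0}^{1}\!\bigl[\nabla_{\lambda}^{2}\eta(x,\lambda+\tau\Delta\lambda)-\nabla_{\lambda}^{2}\eta(x,\lambda)\bigr]\Delta\lambda\,d\tau.
\end{equation*}
Applying \eqref{inequality 2} to the integrand and then using \eqref{inequality 15} at $\tau=1$ to translate the local Hessian norm at $\lambda+\Delta\lambda$ back to the one at $\lambda$, the Newton decrement of the next iterate satisfies $\delta(x,\lambda+\Delta\lambda;\mu,\rho)\leq(\delta/(1-\delta))^{2}$. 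The auxiliary bound $(\delta/(1-\delta))^{2}\leq\delta/2$ then reduces to the quadratic inequality $2\delta\leq(1-\delta)^{2}$, whose relevant root is $\delta=2-\sqrt{3}$.

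For part (iii), I would exploit the quadratic convergence of (ii): starting from $\lambda$ with Newton decrement $\delta_{0}=\delta<1/3$, the iterates $\lambda_{j+1}=\lambda_{j}+\Delta\lambda_{j}$ satisfy $\delta_{j+1}\leq(\delta_{j}/(1-\delta_{j}))^{2}$, hence $\delta_{j}\to 0$ and $\lambda_{j}\to\lambda(x;\mu,\rho)$. Estimating the length $\sum_{j}\|\Delta\lambda_{j}\|_{\nabla_{\lambda}^{2}\eta(x,\lambda)}$ in the fixed Hessian norm at $\lambda$ by chaining \eqref{inequality 15} and the recursion yields a geometric series whose sum admits the closed-form bound $1-(1-3\delta)^{1/3}$, matching the classical argument in \cite[Thm.~2.2.3]{nesterov1994interior}. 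I expect the main obstacle to be bookkeeping: carefully propagating the $\alpha(\mu)$ scaling through the Newton-decrement inequalities and rewriting all estimates in the fixed reference metric $\nabla_{\lambda}^{2}\eta(x,\lambda;\mu,\rho)$, so that the final inequalities on $\delta$ and $\widetilde{\delta}$ appear in a dimensionally consistent form.
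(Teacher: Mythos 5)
Your proposal is correct and follows essentially the same route as the paper, which simply invokes \cite[Theorems 2.2.2 and 2.2.3]{nesterov1994interior} after identifying $\delta$ as the Newton decrement of the $\alpha(\mu)$-self-concordant function $\eta(x,\cdot;\mu,\rho)$ (via Theorem \ref{self concordant} and Lemma \ref{inequality 2025}); your explicit integrations for (i) and (ii) check out, and your sketch of (iii) matches the cited classical argument.
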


To estimate the iteration complexity of the Newton method for \eqref{eq subproblem}, we define the following function:
\begin{equation*}
	\theta(\mu^{(k)}):=\eta(x^{(k)},\lambda^{(k)};\mu^{(k)},\rho)-\eta(x^{(k)},\lambda(\mu^{(k)});\mu^{(k)},\rho),
\end{equation*}
where $\eta(x^{(k)},\lambda^{(k)};\mu^{(k)},\rho)$ represents the initial objective value at the $k$-th iteration, and $\eta(x^{(k)},\lambda(\mu^{(k)});\mu^{(k)},\rho)$ represents the optimal objective value at the $k$-th iteration. For simplicity, we omit the superscript and let 
\begin{equation}\label{definition of phi(mu)}
	\theta(\mu)=\eta(x,\lambda;\mu,\rho)-\eta(x,\lambda(\mu);\mu,\rho).
\end{equation}
Then $\theta(\mu)$ satisfies the following properties.

\begin{lemma}\label{inequality 10}
	For any $\rho>0$ and $\mu>0$, let $\widetilde{\delta}:= \widetilde{\delta}(x,\lambda;\mu,\rho)$ be defined in  \eqref{merit function 2}.
     If $\widetilde{\delta}<1$, then the following inequalities hold: 
	\begin{subequations}
		\begin{align}
			\theta(\mu)&\leq\left(\frac{\tilde{\delta}}{1-\tilde{\delta}}+\ln(1-\tilde{\delta})\right)\rho\mu,\label{inequality 3}\\
			\left|\theta'(\mu)\right|&\leq-\rho\sqrt{\nu}\ln(1-\widetilde{\delta}),\label{inequality 4}\\
			\left|\theta''(\mu)\right|&\leq\dfrac{2\rho\nu}{\mu}.\label{inequality 9}
		\end{align}
	\end{subequations}

\end{lemma}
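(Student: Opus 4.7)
The plan is to exploit the fact that $g(\lambda) := \eta(x,\lambda;\mu,\rho)/(\rho\mu)$ is a standard self-concordant function of $\lambda$ (an immediate consequence of Theorem \ref{self concordant}), and that $\widetilde{\delta}$ coincides with the local norm $\|\widetilde{\Delta\lambda}\|_{g,\lambda}$ of the displacement $\widetilde{\Delta\lambda} = \lambda - \lambda(\mu)$ in the Hessian metric of $g$ at $\lambda$. Lemma \ref{inequality 2025}, read in terms of $g$ for an arbitrary direction $h$ and admissible $\tau$, supplies the Hessian comparison
\begin{equation*}
(1-\tau\|h\|_{g,\lambda})^2 \nabla_\lambda^2 g(\lambda) \preceq \nabla_\lambda^2 g(\lambda + \tau h) \preceq (1-\tau\|h\|_{g,\lambda})^{-2} \nabla_\lambda^2 g(\lambda).
\end{equation*}
This estimate, combined with Theorem \ref{inequality 14}(a) and the LHSCB identity $\nabla\phi^{\top}(\nabla^2\phi)^{-1}\nabla\phi = \nu$ that follows from \eqref{equation 1}, drives all three bounds.

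For \eqref{inequality 3}, I would parametrize the segment from $\lambda(\mu)$ to $\lambda$ by $\chi(t) := g(\lambda(\mu) + t\widetilde{\Delta\lambda})$, $t \in [0,1]$. Because $\nabla_\lambda \eta(x,\lambda(\mu);\mu,\rho) = 0$, we have $\chi'(0) = 0$, so $\chi(1) - \chi(0) = \int_0^1 (1-s)\chi''(s)\,ds$. Anchoring the Hessian comparison at $\lambda$ and moving in the direction $-(1-s)\widetilde{\Delta\lambda}$, whose local norm is $(1-s)\widetilde{\delta}$, gives $\chi''(s) \leq \widetilde{\delta}^{2}/(1-(1-s)\widetilde{\delta})^{2}$. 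The substitution $u = 1 - (1-s)\widetilde{\delta}$ converts the remaining integral exactly to $\widetilde{\delta}/(1-\widetilde{\delta}) + \ln(1-\widetilde{\delta})$; multiplying by $\rho\mu$ yields \eqref{inequality 3}.

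For \eqref{inequality 4}, the envelope identity $\frac{d}{d\mu}\eta(x,\lambda(\mu);\mu,\rho) = \eta'_\mu(x,\lambda(\mu);\mu,\rho)$ (which uses $\nabla_\lambda \eta|_{\lambda(\mu)} = 0$) collapses $\theta'(\mu)$ to $\int_0^1 \langle \nabla_\lambda \eta'_\mu(x,\lambda(\mu) + t\widetilde{\Delta\lambda};\mu,\rho), \widetilde{\Delta\lambda}\rangle\,dt$. I would bound the integrand pointwise by Theorem \ref{inequality 14}(a) at $\lambda(\mu)+t\widetilde{\Delta\lambda}$ and then control the Hessian factor $\widetilde{\Delta\lambda}^{\top}\nabla_\lambda^2\eta(x,\lambda(\mu)+t\widetilde{\Delta\lambda};\mu,\rho)\widetilde{\Delta\lambda}$ through the same Hessian comparison anchored at $\lambda$ with direction $-(1-t)\widetilde{\Delta\lambda}$, producing the pointwise estimate $\rho\sqrt{\nu}\widetilde{\delta}/(1-(1-t)\widetilde{\delta})$. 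The same change of variable integrates this to $-\rho\sqrt{\nu}\ln(1-\widetilde{\delta})$.

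For \eqref{inequality 9}, differentiating $\theta'(\mu)$ and using $\lambda'(\mu) = -(\nabla_\lambda^2\eta)^{-1}\nabla_\lambda\eta'_\mu$ at $\lambda(\mu)$ (obtained by differentiating the optimality condition in $\mu$) produces the three-term decomposition $\theta''(\mu) = \eta''_{\mu\mu}(x,\lambda;\mu,\rho) - \eta''_{\mu\mu}(x,\lambda(\mu);\mu,\rho) + \langle \nabla_\lambda\eta'_\mu,\,(\nabla_\lambda^2\eta)^{-1}\nabla_\lambda\eta'_\mu\rangle|_{\lambda(\mu)}$. An envelope computation based on \eqref{quadratic function} gives $\eta'_\mu = \rho\phi(s)$ and $\eta''_{\mu\mu} = -\rho^2\nabla_s\phi(s)^{\top}H^{-1}\nabla_s\phi(s)$ with $s = s(x,\lambda;\mu,\rho)$; combining $H^{-1}\preceq W^{-1}=(\rho\mu)^{-1}(\nabla_s^2\phi)^{-1}$ with the LHSCB identity yields $|\eta''_{\mu\mu}| \leq \rho\nu/\mu$ uniformly, so the difference of the first two terms contributes at most $\rho\nu/\mu$ in absolute value, while Theorem \ref{inequality 14}(a), in its dual Cauchy–Schwarz form, bounds the Rayleigh-quotient term by $\rho\nu/\mu$ as well, delivering $|\theta''(\mu)| \leq 2\rho\nu/\mu$. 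The main obstacle will be the bookkeeping in \eqref{inequality 9}: differentiating through the implicit minimizer $\lambda(\mu)$ produces three separately-bounded pieces whose summed constant is tight only because the LHSCB identity (for the envelope second derivatives) and Theorem \ref{inequality 14}(a) (for the Rayleigh-quotient term) are invoked simultaneously and at exactly the right places.
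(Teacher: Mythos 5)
Your proposal is correct and follows essentially the same route as the paper's proof: the weighted single integral $\int_0^1(1-s)\chi''(s)\,ds$ is just the Fubini form of the paper's double integral, the envelope identity plus Theorem \ref{inequality 14}(a) and the Hessian comparison anchored at $\lambda$ give \eqref{inequality 4} exactly as in the paper, and the three-term decomposition of $\theta''$ via $\lambda'(\mu)=-(\nabla_\lambda^2\eta)^{-1}\nabla_\lambda\eta'$ with the sign information $\eta''_{\mu\mu}\le 0$ is the paper's argument (your dual Cauchy--Schwarz bound on the Rayleigh-quotient term replaces the paper's explicit projection computation, but the two are equivalent). No gaps.
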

\begin{proof}
	According to the definition of $\theta(\mu)$ in \eqref{definition of phi(mu)}, we obtain 
	\begin{equation}\label{definition2 of phi(mu)}
		\theta(\mu)=\int_0^1 \widetilde{\Delta\lambda}^{\top}\nabla_{\lambda}\eta(x,\lambda(\mu)+t\widetilde{\Delta\lambda};\mu,\rho)\ dt.
	\end{equation}
	Since $\lambda(\mu)=\argmin_{\lambda} \eta(x,\lambda;\mu,\rho)$, it follows that $\nabla_{\lambda}\eta(x,\lambda(\mu);\mu,\rho)=0$. Then \eqref{definition2 of phi(mu)} can be further written as 
	\begin{equation}\label{definition3 of phi(mu)}
		\theta(\mu)=\int_0^1\int_0^{t} \widetilde{\Delta\lambda}^{\top}\nabla_{\lambda}^2\eta(x,\lambda(\mu)+w\widetilde{\Delta\lambda};\mu,\rho)\widetilde{\Delta\lambda}\ dwdt.
 	\end{equation}
 Combining with \eqref{inequality 15}, we obtain
 \begin{equation}\label{inequality 6}
 	\widetilde{\Delta\lambda}^{\top}\nabla_{\lambda}^2\eta(x,\lambda(\mu)+w\widetilde{\Delta\lambda};\mu,\rho)\widetilde{\Delta\lambda}\leq \dfrac{\rho\mu\widetilde{\delta}^2}{(1-\widetilde{\delta}+w\widetilde{\delta})^2}.
 \end{equation}
From \eqref{definition3 of phi(mu)}, we can further deduce that 
\begin{equation}
	\begin{aligned}
	\theta(\mu)&\leq\int_0^1\int_0^{t}\dfrac{\rho\mu\widetilde{\delta}^2}{(1-\widetilde{\delta}+w\widetilde{\delta})^2}\ dwdt\\
	&=\left(\frac{\tilde{\delta}}{1-\tilde{\delta}}+\ln(1-\tilde{\delta})\right)\rho\mu.
	\end{aligned}
\end{equation}
Therefore, inequality \eqref{inequality 3} holds.

 By differentiating \eqref{definition of phi(mu)} and using $\nabla_{\lambda}\eta(x,\lambda(\mu);\mu,\rho)=0$, we obtain 
\begin{equation}\label{derivative phi(mu)}
	\begin{aligned}
	\theta'(\mu)&=\eta'(x,\lambda;\mu,\rho)-\eta'(x,\lambda(\mu);\mu,\rho)-\lambda'(\mu)^{\top}\nabla_{\lambda}\eta(x,\lambda(\mu);\mu,\rho)\\
	&=\eta'(x,\lambda;\mu,\rho)-\eta'(x,\lambda(\mu);\mu,\rho)\\
	&=\int_0^1 \widetilde{\Delta\lambda}^{\top}\nabla_{\lambda}\eta'(x,\lambda(\mu)+t\widetilde{\Delta\lambda};\mu,\rho) \ dt.
	\end{aligned}
\end{equation}
According to Theorem~\ref{inequality 14} and \eqref{inequality 6}, 
\begin{equation}
	\begin{aligned}
	\left|	\theta'(\mu)\right|&\leq\int_0^1\left| \widetilde{\Delta\lambda}^{\top}\nabla_{\lambda}\eta'(x,\lambda(\mu)+t\widetilde{\Delta\lambda};\mu,\rho)\right| \ dt\\
	&\overset{(\ref{inequality 14-1}b)}{\leq}\int_0^1\sqrt{\dfrac{\rho\nu}{\mu}}\sqrt{	\widetilde{\Delta\lambda}^{\top}\nabla_{\lambda}^2\eta(x,\lambda(\mu)+t\widetilde{\Delta\lambda};\mu,\rho)\widetilde{\Delta\lambda}} \ dt\\
	&\overset{\eqref{inequality 6}}{\leq} \int_0^1 \sqrt{\dfrac{\rho\nu}{\mu}} \dfrac{\sqrt{\rho\mu}\widetilde{\delta}}{(1-\widetilde{\delta}+t\widetilde{\delta})}\ dt\\
	&=-\rho\sqrt{\nu}\ln(1-\widetilde{\delta}).
	\end{aligned}
\end{equation}
Inequality \eqref{inequality 4} is thereby proven.

By differentiating \eqref{definition of phi(mu)} once more, we obtain 
\begin{equation}
	\theta''(\mu)=\eta''(x,\lambda;\mu,\rho)-\eta''(x,\lambda(\mu);\mu,\rho)-(\lambda'(\mu))^{\top}\nabla_{\lambda}\eta'(x,\lambda(\mu);\mu,\rho).
\end{equation}
Differentiating $\nabla_{\lambda}\eta(x,\lambda(\mu);\mu,\rho)=0$  with respect to $\mu$  yields
\begin{equation}
	\nabla_{\lambda}\eta'(x,\lambda(\mu);\mu,\rho)+\nabla_{\lambda}^2\eta(x,\lambda(\mu);\mu,\rho)\lambda'(\mu)=0,
\end{equation}
and thus $\lambda'(\mu)=-(\nabla_{\lambda}^2\eta(x,\lambda(\mu);\mu,\rho))^{-1}\nabla_{\lambda}\eta'(x,\lambda(\mu);\mu,\rho)$.

Based on Theorem~\ref{gradent and hessian of eta}, it follows that
	\begin{subequations}
	\begin{align}
		\nabla_\lambda \eta'(x,\lambda(\mu);\mu,\rho) &=  \mathcal{A}z'(x,\lambda(\mu);\mu,\rho)=-\rho \mathcal{A}H^{-1}\nabla_s\phi(s(x,\lambda(\mu);\mu,\rho)),\label{gradient of Ly2.1}\\
		\nabla_{\lambda}^2 \eta(x,\lambda(\mu);\mu,\rho) &=\mathcal{A}H^{-1}W\mathcal{A}^{*}.\label{hessian of Ly2.1}
	\end{align}
\end{subequations}
For convenience of description, we omit $x,\,\lambda(\mu),\,\mu$ and $\rho$ in the functions. Then 
\begin{equation}\label{inequality 7}
	\begin{aligned}
	-(\nabla_{\lambda}\eta')^{\top}\lambda'(\mu)&=(\nabla_{\lambda}\eta')^{\top}(\nabla_{\lambda}^2 \eta)^{-1}(\nabla_{\lambda}\eta')\\
	&=\rho^2 \langle \nabla_s\phi, H^{-1}\mathcal{A}^{*}(\mathcal{A}H^{-1}W\mathcal{A}^{*})^{-1} \mathcal{A}H^{-1}(\nabla_s\phi) \rangle\\
	&=\rho^2 \langle \nabla_s\phi, H^{-\frac{1}{2}}W^{-\frac{1}{2}}G^{\top}(GG^{\top})^{-1}GW^{-\frac{1}{2}}H^{-\frac{1}{2}}(\nabla_s\phi)\rangle\\
    &\leq \rho^2 \langle \nabla_s\phi, H^{-1}W^{-1}(\nabla_s\phi)\rangle\\
    &\leq \rho^2 \langle \nabla_s\phi, W^{-1}(\nabla_s\phi)\rangle\\
    &= \dfrac{\rho\nu}{\mu},
	\end{aligned}
\end{equation}
where $G = \mathcal{A}W^{\frac{1}{2}}H^{-\frac{1}{2}}$. Noting that
\begin{equation}
	\begin{aligned}
		\eta'(x,\lambda(\mu);\mu,\rho)&=\rho\phi+\langle\rho\mu\nabla_s\phi,s'\rangle+\langle\rho x,s'\rangle+\langle\mathcal{A}^{*}\lambda+s-c,s'\rangle\\
		&=\rho\phi+\langle\rho\mu\nabla_s\phi+\rho x+\mathcal{A}^{*}\lambda+s-c,s'\rangle\\
		&=\rho\phi,
	\end{aligned}
\end{equation}
 further differentiation with respect to $\mu$ yields
 \begin{equation}
 	\eta''(x,\lambda(\mu);\mu,\rho)=\rho\phi'=\rho \langle \nabla_s\phi, s'\rangle=-\rho^2 \langle \nabla_s\phi, H^{-1}\nabla_s\phi\rangle.
 \end{equation}
  Therefore, 
  \begin{equation}\label{inequality 8}
  	-\eta''(x,\lambda(\mu);\mu,\rho)=\rho^2 \langle \nabla_s\phi, H^{-1}\nabla_s\phi \rangle\leq\rho^2 \langle \nabla_s\phi, W^{-1}\nabla_s\phi \rangle=\dfrac{\rho\nu}{\mu}.
  \end{equation}
Similarly, it can be proven that $\eta''(x,\lambda;\mu,\rho)<0$. Combining equations \eqref{inequality 7} and \eqref{inequality 8}, we obtain \eqref{inequality 9}.
\end{proof}



\begin{remark}\label{inexact-lambda}
    In practice, at the $k$-th iteration, $\lambda^{(k+1)}$ is obtained when $\delta^{(k,j)}\le \kappa = \frac{1}{4}$, which implies
    $\widetilde{\delta}(x^{(k)},\lambda^{(k+1)}, \mu^{(k)},\rho) \le 1-(1-3\delta^{(k,j)})^{\frac{1}{3}} < \frac{1}{2}$ from Lemma~\ref{inequality 11}. 
    Consequently, Lemma~\ref{inequality 10} ensures that
    \begin{equation}\label{stopping criteria-exact}
        \eta (x^{(k)},\lambda^{(k+1)},\mu^{(k)},\rho) - \inf\limits_\lambda \eta(x^{(k)},\lambda,\mu^{(k)},\rho) <  \rho\mu^{(k)},
    \end{equation}
    once $\lambda^{(k+1)}$ has been computed in Step 2 of Algorithm~\ref{algsbal}. This is consistent with the stopping criterion (\ref{stopping criteria}), provided that $\epsilon^{(k)} = \sqrt{2\mu^{(k)}} = \sigma^{\frac{k}{2}} \sqrt{2\mu^{(0)}}$.
\end{remark}

The conclusion below presents the upper bound on the difference between the initial value and the optimal value of the function $\eta(x^{(k)},\cdot;\mu^{(k)},\rho)$ after $\mu$ changes.

\begin{lemma}\label{inequality 13}
	For any $\rho>0$ and $\mu>0$, let $\widetilde{\delta}\leq\widetilde{\beta}<1$ and $\mu^{+}=\sigma\mu$ with $\sigma\in(0,1)$. Then
	\begin{equation}\label{inequality 12 }
	{
		\eta(x,\lambda;\mu^+,\rho)-\eta(x,\lambda(\mu^+);\mu^+,\rho)\leq \mathcal{O}(1)\rho\nu\mu^+,}
	\end{equation}
where $\mathcal{O}(1)$ depends only on $\widetilde{\beta}$ and $\sigma$.
\end{lemma}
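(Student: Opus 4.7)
The plan is to estimate $\theta(\mu^+)=\eta(x,\lambda;\mu^+,\rho)-\eta(x,\lambda(\mu^+);\mu^+,\rho)$ by a second-order Taylor expansion of $\theta$ around $\mu$, namely
$$\theta(\mu^+)=\theta(\mu)+\theta'(\mu)(\mu^+-\mu)+\tfrac12\theta''(\xi)(\mu^+-\mu)^2$$
for some $\xi$ between $\mu^+$ and $\mu$, and then to bound each of the three terms using Lemma \ref{inequality 10}. The useful identity throughout is $\mu^+-\mu=-\tfrac{1-\sigma}{\sigma}\mu^+$, so every factor of $\mu$ converts into $\mu^+/\sigma$ at the cost of a constant depending only on $\sigma$.

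First, I would note that the maps $t\mapsto \frac{t}{1-t}+\ln(1-t)$ and $t\mapsto -\ln(1-t)$ are nondecreasing on $[0,1)$, so the assumption $\widetilde{\delta}\le\widetilde{\beta}<1$ together with Lemma \ref{inequality 10}(a)--(b) yields
$$\theta(\mu)\le C_1(\widetilde{\beta})\,\rho\mu,\qquad |\theta'(\mu)|\le C_2(\widetilde{\beta})\,\rho\sqrt{\nu},$$
where $C_1$ and $C_2$ depend only on $\widetilde{\beta}$. Inserting these into the Taylor expansion, the first two terms are bounded by
$\frac{C_1}{\sigma}\rho\mu^+ + C_2\tfrac{1-\sigma}{\sigma}\rho\sqrt{\nu}\,\mu^+$, which, using $\sqrt{\nu}\le\nu$ (since $\nu\ge1$), is of the desired form $\mathcal{O}(1)\rho\nu\mu^+$ with constants depending only on $\widetilde{\beta}$ and $\sigma$.

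For the remainder term I would invoke the (assumption-light) estimate $|\theta''(\xi)|\le 2\rho\nu/\xi$ from Lemma \ref{inequality 10}(c). Since $\xi\ge\mu^+$, one gets $|\theta''(\xi)|\le 2\rho\nu/\mu^+$, and combining with $(\mu^+-\mu)^2=(\mu^+)^2(1-\sigma)^2/\sigma^2$ produces a remainder bound $\rho\nu\mu^+(1-\sigma)^2/\sigma^2$, again of the form $\mathcal{O}(1)\rho\nu\mu^+$. Summing the three contributions and using $\theta(\mu^+)\ge 0$ (so that the signed Taylor expansion provides an upper bound) finishes the argument.

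The subtlety I expect to be the main obstacle is the legitimacy of applying Lemma \ref{inequality 10}(c) at an intermediate point $\xi\in(\mu^+,\mu)$ rather than at $\mu$ itself. Since the bound $2\rho\nu/\xi$ does not involve $\widetilde{\delta}$, one route is to verify from the derivation that it is a universal estimate on $|\theta''|$ independent of the closeness parameter. An alternative route is to show that $\widetilde{\delta}(x,\lambda;t,\rho)$ stays bounded below $1$ for all $t\in[\mu^+,\mu]$: this should follow from the continuity of $\widetilde{\delta}$ in $\mu$ together with the strongly self-concordant family structure established in Theorem \ref{self concordant family}, which controls how $\nabla_\lambda\eta$ and $\nabla_\lambda^2\eta$ vary with $\mu$ via Theorem \ref{inequality 14}. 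Either justification secures the estimate on $\theta''(\xi)$, after which the conclusion $\theta(\mu^+)\le\mathcal{O}(1)\rho\nu\mu^+$ is immediate.
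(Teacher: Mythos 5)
Your proposal is correct and follows essentially the same route as the paper: a second-order Taylor expansion of $\theta$ about $\mu$, with each of the three terms bounded via Lemma \ref{inequality 10} and every factor of $\mu$ converted to $\mu^{+}/\sigma$. The only cosmetic difference is that the paper writes the remainder in integral form, $\int_{\mu^+}^\mu\int_t^\mu\theta''(w)\,dw\,dt$, and applies $|\theta''(w)|\le 2\rho\nu/w$ pointwise on $[\mu^+,\mu]$ --- which confirms the resolution you anticipated, namely that the bound on $\theta''$ is a universal estimate independent of $\widetilde{\delta}$ and hence valid at intermediate points.
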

\begin{proof}
	Lemma~\ref{inequality 10} states that 
	\begin{equation*}\begin{aligned}
		\theta(\mu^{+})& =\theta(\mu)+(\mu^+-\mu)\theta'(\mu)+\int_{\mu^+}^\mu\int_t^\mu\theta''(w)\ dw dt  \\
		&\leq\left[\frac{\tilde{\delta}}{1-\tilde{\delta}}+\ln(1-\tilde{\delta})\right]\rho\mu+\ln(1-\tilde{\delta})\rho\sqrt{\nu}(\mu-\mu^+) +2\rho\nu\int_{\mu^+}^\mu\int_t^\mu w^{-1}\ dw dt \\
        &\leq\left[\frac{\tilde{\delta}}{1-\tilde{\delta}}+\ln(1-\tilde{\delta})\right]\rho\mu+2\rho\nu(\mu-\mu^++\mu^{+}\ln\sigma)\\
        &\leq\left[\frac{\tilde{\beta}}{1-\tilde{\beta}}+\ln(1-\tilde{\beta})\right]\dfrac{\rho}{\sigma}\mu^++2\rho\nu(\dfrac{1}{\sigma}-1+\ln\sigma)\mu^+.
\end{aligned}\end{equation*}
This completes the proof.
\end{proof}

Next, we prove that the stopping criterion in Algorithm~\ref{alg4} is well-defined. For notational convenience, we adopt Rockafellar's notations \cite[Section 8]{rockafellar1970convex} and define $0^+ \mathcal{C}$ as the recession cone of the closed convex set $\mathcal{C}$, and $(\Phi 0^+): \mathbb{R}^m \rightarrow (0,+\infty]$ as the recession function of a proper, closed, and convex function $\Phi: \mathbb{R}^m \rightarrow (0,+\infty]$. It then 
follows from \cite[Theorem 8.4, Theorem 8.6, and Theorem 8.7]{rockafellar1970convex} that 
$\Phi$ is level bounded if and only if 
\begin{equation}\label{level-bounded-implication}
    (\Phi 0^+)(\Delta \lambda) \le 0 \implies \Delta \lambda = 0.
\end{equation}
As an example, and for the subsequent proof, we now provide the recession function of $\phi$.
\begin{lemma}\label{phi-recession}
    Let $\phi: \K \rightarrow (0,+\infty]$ be a natural barrier of $\K$. Then 
    \begin{equation}
        (\phi 0^+)(\Delta s) = 
        \left\{\begin{aligned}
            & +\infty, & {\rm when}\ \Delta s\notin \K;\\
            & 0, & {\rm when} \ \Delta s \in \K.
        \end{aligned}  \right.
    \end{equation}
\end{lemma}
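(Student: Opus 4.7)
I plan to split the proof into two cases based on whether $\Delta s\in\K$. If $\Delta s\notin\K$, self-duality $\K=\K^*$ and the separation theorem yield some $y\in\K$ with $\langle y,\Delta s\rangle<0$. Fixing any $x_0\in\interior\,(\K)$, we then have $\langle y,x_0+t\Delta s\rangle\to-\infty$ as $t\to\infty$, so $x_0+t\Delta s\notin\K$ for all sufficiently large $t$ (otherwise $y\in\K^*$ would force $\langle y,x_0+t\Delta s\rangle\ge 0$). Hence $\phi(x_0+t\Delta s)=+\infty$ for large $t$, and because the recession function equals $\lim_{t\to\infty}(\phi(x_0+t\Delta s)-\phi(x_0))/t$ (cf.\ \cite[Theorem 8.5]{rockafellar1970convex}), we conclude $(\phi 0^+)(\Delta s)=+\infty$.

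For the case $\Delta s\in\K$, I fix $x_0\in\interior\,(\K)$, so that $x_0+t\Delta s\in\interior\,(\K)$ for every $t\ge 0$. Applying log-homogeneity \eqref{log homogeneous} to the factorization $x_0+t\Delta s=t(\Delta s+x_0/t)$ gives
\begin{equation*}
\phi(x_0+t\Delta s)=\phi(\Delta s+x_0/t)-\nu\ln t,\qquad t>0,
\end{equation*}
and I aim to squeeze the quotient $(\phi(x_0+t\Delta s)-\phi(x_0))/t$ between two quantities that vanish as $t\to\infty$. For the upper bound I first observe that $-\nabla\phi(x)\in\K^*=\K$ for every $x\in\interior\,(\K)$: the Legendre identity $\phi^*(\nabla\phi(x))=-\nu-\phi(x)<+\infty$ combined with the fact that $\phi^*$ is a $\nu$-$\LHSC$ of $-\K^*$ forces $\nabla\phi(x)\in\mathrm{dom}(\phi^*)=-\interior\,(\K^*)$, whence the claim by self-duality. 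Consequently $\langle\nabla\phi(y),\Delta s\rangle\le 0$ for every $y\in\interior\,(\K)$, so integrating along the ray $[x_0,x_0+t\Delta s]$ yields $\phi(x_0+t\Delta s)\le\phi(x_0)$ and the quotient is $\le 0$. For the lower bound I fix some $\bar y\in\interior\,(\K)$ and apply the subgradient inequality of $\phi$ at $\bar y$ to $\Delta s+x_0/t$:
\begin{equation*}
\phi(\Delta s+x_0/t)\ge\phi(\bar y)+\langle\nabla\phi(\bar y),\Delta s-\bar y\rangle+\tfrac{1}{t}\langle\nabla\phi(\bar y),x_0\rangle.
\end{equation*}
Dividing by $t$ and letting $t\to\infty$ forces $\liminf\,\phi(\Delta s+x_0/t)/t\ge 0$; combined with $-\nu\ln t/t\to 0$ and $-\phi(x_0)/t\to 0$, the quotient has $\liminf\ge 0$. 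Together with the upper bound the limit is $0$, so $(\phi 0^+)(\Delta s)=0$.

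The main technical hurdle is the monotonicity inequality $\phi(x_0+t\Delta s)\le\phi(x_0)$, which rests on $-\nabla\phi(x)\in\K$ for all $x\in\interior\,(\K)$. This inclusion is classical for $\nu$-$\LHSC$'s on symmetric cones, and the short derivation via the Legendre identity sketched above keeps the argument self-contained within the paper's framework. An alternative, more powerful route would be to invoke Rockafellar's identity $(\phi 0^+)(d)=\sigma_{\overline{\mathrm{dom}(\phi^*)}}(d)$ (\cite[Theorem 13.3]{rockafellar1970convex}): since $\overline{\mathrm{dom}(\phi^*)}=-\K^*=-\K$, the support function $\sigma_{-\K}(\Delta s)$ evaluates to $0$ on $\K$ and $+\infty$ off $\K$, recovering both cases in one line.
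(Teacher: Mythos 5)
Your proof is correct, and it departs from the paper's argument in a few worthwhile ways. The paper also reduces to the identity $(\phi 0^+)(\Delta s)=\lim_{\alpha\to\infty}\frac{\phi(s+\alpha\Delta s)-\phi(s)}{\alpha}$ and also exploits log-homogeneity by rewriting the quotient as $\frac{\phi(\frac{s}{1+\alpha}+\frac{\alpha\Delta s}{1+\alpha})-\phi(s)}{\alpha}-\nu\frac{\ln(1+\alpha)}{\alpha}$; but it then splits $\mathcal{K}$ into interior and boundary: for $\Delta s\in\operatorname{int}(\mathcal{K})$ the reparametrized point converges to $\Delta s$ where $\phi$ is continuous, so the quotient tends to $0$, and for $\Delta s\in\mathcal{K}\setminus\operatorname{int}(\mathcal{K})$ it invokes the closedness (lower semicontinuity) of the recession function to push the value $0$ from the interior to the boundary. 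The case $\Delta s\notin\mathcal{K}$ is dismissed in one clause. Your squeeze handles all of $\mathcal{K}$ uniformly: the upper bound via $-\nabla\phi(x)\in\operatorname{int}(\mathcal{K}^*)$ (correctly derived from the Legendre identity $\phi^*(\nabla\phi(x))=-\nu-\phi(x)$ and the fact, already cited in the paper, that $\phi^*$ is a $\nu$-LHSCB of $-\mathcal{K}^*$) gives $\phi(x_0+t\Delta s)\le\phi(x_0)$, and the lower bound via a single affine minorant of $\phi$ at $\Delta s+x_0/t$ avoids any continuity-at-the-limit issue, so no interior/boundary split and no appeal to the closedness of $\phi 0^+$ is needed; you also supply the separation argument for $\Delta s\notin\mathcal{K}$ that the paper omits. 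The price is the extra (standard) inclusion $-\nabla\phi\in\mathcal{K}$; the gain is a self-contained argument that is arguably more robust, since the paper's boundary step additionally needs $(\phi 0^+)\ge 0$, which it leaves implicit. Your closing alternative via $(\phi 0^+)=\sigma_{\overline{\operatorname{dom}(\phi^*)}}$ and $\overline{\operatorname{dom}(\phi^*)}=-\mathcal{K}^*=-\mathcal{K}$ is the shortest correct route of all and would be a legitimate replacement for the whole proof.
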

\begin{proof}
    For a given $s\in {\rm int}\, (\K)$, 
    \begin{equation}
        (\phi 0^+)(\Delta s) \stackrel{\text{(i)}}{=} \sup\limits_{\alpha>0} \dfrac{\phi(s+\alpha \Delta s)-\phi(s)}{\alpha} \stackrel{\text{(ii)}}{=} \lim\limits_{\alpha \rightarrow +\infty} \dfrac{\phi(s+\alpha \Delta s)-\phi(s)}{\alpha}.
    \end{equation}
    Here, identity~(i) follows from \cite[Theorem 8.5]{rockafellar1970convex} since the definition is independent of $s$, and identity~(ii) comes from the monotonicity of $\dfrac{\phi (s+ \alpha \Delta s) - \phi (s)}{\alpha}$ with respect to $\alpha$. It remains only to show the case $\Delta s \in \K$. Since $\phi$ is a natural barrier, it is also a $\nu$-$\LHSC$. Hence,
    \begin{equation}
        \dfrac{\phi(s+\alpha \Delta s)-\phi(s)}{\alpha} = \dfrac{\phi(\frac{s}{1+\alpha}+\frac{\alpha \Delta s}{1+\alpha})-\phi (s)}{\alpha} -\nu \dfrac{\ln (1+\alpha)}{\alpha}.
    \end{equation}
    If $\Delta s \in {\rm int}\, (\K)$, then  $\frac{s}{1+\alpha}+\frac{\alpha \Delta s}{1+\alpha} \rightarrow \Delta s \in {\rm int}\, (\K)$ as $\alpha \rightarrow +\infty$, and therefore
    \begin{equation}
        (\phi 0^+)(\Delta s) = \lim\limits_{\alpha \rightarrow +\infty} \left\{ \dfrac{\phi(\frac{s}{1+\alpha}+\frac{\alpha \Delta s}{1+\alpha})-\phi (s)}{\alpha} -\nu \dfrac{\ln (1+\alpha)}{\alpha} \right\} =0.
    \end{equation}
    If instead $\Delta s\in \K \backslash {\rm int}\, (\K)$, then by the closeness of $\phi 0^+$ \cite[Theorem 8.5]{rockafellar1970convex} and the fact that $\phi 0^+ (\Delta s)$ is nonnegative for all $\Delta s \in \K$, it follows that $(\phi 0^+)(\Delta s) = 0$. This completes the proof.
    
\end{proof}

The level boundedness of the subproblem function $\eta (x,\cdot;\mu,\rho)$ is then established.
\begin{proposition}\label{eta-level-bounded}
   For any fixed $x,\,\mu>0$, and $\rho>0$, $\eta (x,\cdot;\mu,\rho)$ is level bounded.
\end{proposition}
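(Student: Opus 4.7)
The plan is to invoke the level-boundedness criterion (\ref{level-bounded-implication}): it suffices to show that $(\eta(x,\cdot;\mu,\rho)\,0^+)(\Delta\lambda) \le 0$ forces $\Delta\lambda = 0$. Writing $\eta(x,\lambda;\mu,\rho) = \min_s L(x,\lambda,s;\mu,\rho)$ and using that $L(x,\cdot,\cdot;\mu,\rho)$ is proper, closed, and jointly convex, with the minimum in $s$ uniquely attained at $s(x,\lambda;\mu,\rho)$ (by the $1$-strong convexity in $s$ noted after \eqref{augmented Lagrangian}), the standard partial-infimum identity for recession functions gives
\begin{equation*}
(\eta(x,\cdot;\mu,\rho)\,0^+)(\Delta\lambda) = \inf_{\Delta s}\bigl(L(x,\cdot,\cdot;\mu,\rho)\,0^+\bigr)(\Delta\lambda,\Delta s).
\end{equation*}

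The next step is to compute $(L\,0^+)(\Delta\lambda,\Delta s)$ piece by piece: the linear part contributes $-\rho\langle b,\Delta\lambda\rangle + \rho\langle x,\mathcal{A}^*\Delta\lambda + \Delta s\rangle$; Lemma~\ref{phi-recession} shows that the barrier term contributes $0$ when $\Delta s \in \mathcal{K}$ and $+\infty$ otherwise; and the strictly convex quadratic $\tfrac{1}{2}\|\mathcal{A}^*\lambda + s - c\|^2$ contributes $0$ whenever $\mathcal{A}^*\Delta\lambda + \Delta s = 0$ and $+\infty$ otherwise. On the feasible set $\{\Delta s \in \mathcal{K},\, \mathcal{A}^*\Delta\lambda + \Delta s = 0\}$ the cross term $\rho\langle x,\mathcal{A}^*\Delta\lambda + \Delta s\rangle$ vanishes, so taking the infimum over $\Delta s$ yields
\begin{equation*}
(\eta(x,\cdot;\mu,\rho)\,0^+)(\Delta\lambda) = \begin{cases} -\rho\langle b,\Delta\lambda\rangle, & \text{if } -\mathcal{A}^*\Delta\lambda \in \mathcal{K}, \\ +\infty, & \text{otherwise.} \end{cases}
\end{equation*}

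To close the argument, I would use the Slater condition on Problem $(\operatorname{P})$: fix $x^* \in \text{int}\,(\mathcal{K})$ with $\mathcal{A}x^* = b$. Assuming $(\eta\,0^+)(\Delta\lambda) \le 0$ gives $-\mathcal{A}^*\Delta\lambda \in \mathcal{K}$ and $\langle b,\Delta\lambda\rangle \ge 0$; combining these with self-duality of $\mathcal{K}$,
\begin{equation*}
\langle b,\Delta\lambda\rangle = \langle x^*,\mathcal{A}^*\Delta\lambda\rangle = -\langle x^*,-\mathcal{A}^*\Delta\lambda\rangle \le 0,
\end{equation*}
so $\langle x^*,-\mathcal{A}^*\Delta\lambda\rangle = 0$. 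Since $x^*$ lies in the interior of the self-dual cone $\mathcal{K}$, it is strictly positive on $\mathcal{K}\setminus\{0\}$; hence $\mathcal{A}^*\Delta\lambda = 0$, and injectivity of $\mathcal{A}^*$ (from surjectivity of $\mathcal{A}$, our standing assumption) yields $\Delta\lambda = 0$. The main subtlety is justifying the partial-infimum recession identity, but this is routine given that $L(x,\cdot,\cdot;\mu,\rho)$ is closed, jointly convex, and coercive in $s$ for every fixed $\lambda$.
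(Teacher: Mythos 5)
Your proof is correct, and it shares the paper's core machinery -- the criterion \eqref{level-bounded-implication}, the recession function of $L(x,\cdot,\cdot;\mu,\rho)$ computed term by term, and Lemma \ref{phi-recession} for the barrier contribution -- but it closes the argument by a genuinely different route. The paper does not touch the Slater point directly: it first cites \cite[Theorem 3.3]{ye2004linear} to get unique solvability of $(\operatorname{D}_{\mu})$, deduces from \cite[Theorem 8.7]{rockafellar1970convex} that the auxiliary function $\Phi_1(\lambda,s)=-\rho\langle b,\lambda\rangle+\rho\mu\phi(s)+\mathcal{I}_{\mathcal{K}}(s)+\mathcal{I}_{\{\mathcal{A}^*\lambda+s=c\}}(\lambda,s)$ is level bounded, and then observes that any recession direction of $L(x,\cdot,\cdot;\mu,\rho)$ with nonpositive recession value is also one of $\Phi_1$, hence zero; level boundedness of $\eta$ then follows by taking the partial infimum over $s$ at the level of the functions. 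You instead push the partial infimum through the recession functions (legitimate, since $L$ is coercive in $s$ for fixed $\lambda$, so the kernel of the projection contains no nontrivial recession direction), obtain the explicit formula $(\eta\,0^+)(\Delta\lambda)=-\rho\langle b,\Delta\lambda\rangle$ on $\{-\mathcal{A}^*\Delta\lambda\in\mathcal{K}\}$ and $+\infty$ elsewhere, and kill the implication with a strictly feasible $x^*$ for $(\operatorname{P})$ together with self-duality and injectivity of $\mathcal{A}^*$. Your version is more self-contained -- it replaces the appeal to \cite{ye2004linear} by a three-line argument from the standing Slater and surjectivity assumptions -- and it yields the recession function of $\eta$ explicitly, which is mildly more informative; the paper's version is shorter on the page because the hard content is outsourced to the cited uniqueness result. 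One small point worth spelling out if you write this up: the identity $(\eta\,0^+)(\Delta\lambda)=\inf_{\Delta s}(L\,0^+)(\Delta\lambda,\Delta s)$ needs the hypothesis of \cite[Theorem 9.2]{rockafellar1970convex}, which here reduces to checking $(L\,0^+)(0,\Delta s)>0$ for $\Delta s\neq 0$; this is immediate from the quadratic term, exactly as you indicate.
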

\begin{proof}
    
    By \cite[Theorem 3.3]{ye2004linear}, Problem ($\operatorname{D}_{\mu}$) has a unique solution, which implies that the function
    $$
        \Phi_1(\lambda,s): =   -\rho\langle b,\lambda\rangle + \rho\mu \phi(s) +    \mathcal{I}_{ \left\{ (\lambda,s)\mid \mathcal{A}^*\lambda +s -c=0 \right\}  }(\lambda,s)
    $$
    is level bounded, as shown in \cite[Theorem 8.7]{rockafellar1970convex}, where $\mathcal{I}_{ \left\{ (\lambda,s)\mid \mathcal{A}^*\lambda +s -c=0 \right\}  }$ denotes the indicator function of $\left\{ (\lambda,s)\mid \mathcal{A}^*\lambda +s -c=0 \right\}$. Consequently, the implication (\ref{level-bounded-implication}) indicates that for some $(s,\lambda)\in \mathcal{F}_{\operatorname{D_{\mu}}} $,
    \begin{equation}\label{Du-recession}
        \begin{aligned}
           & (\Phi_1 0^+)(\Delta \lambda, \Delta s) \stackrel{\text{(i)}}{=} -\rho\langle b, \Delta \lambda \rangle + \rho \mu (\phi0^+)(\Delta s)   \\
           & \qquad \qquad \qquad \qquad \qquad  \qquad + \mathcal{I}_{\left\{ (\Delta \lambda, \Delta s)\mid \mathcal{A}^* \Delta \lambda + \Delta s=0 \right\}} (\Delta \lambda, \Delta s) \le 0\\
           & \implies (\Delta \lambda, \Delta s) = 0,
        \end{aligned}
    \end{equation}
    where identity (i) is obtained by \cite[Theorem 8.5]{rockafellar1970convex}. Recall that 
    \begin{equation}
	L(x,\lambda,s;\mu,\rho)=-\rho \langle{b}, \lambda \rangle+\rho\mu\phi(s)+\rho\langle x,\mathcal{A}^{*}\lambda+s-c\rangle+\dfrac{1}{2}\left\|\mathcal{A}^{*}\lambda+s-c\right\|^2.
    \end{equation} 
    Thus, 
    \begin{equation}\label{L-recession}
    \begin{aligned}
             &\ (L(x,\cdot,\cdot;\mu,\rho)0^+)(\Delta \lambda, \Delta s)\\
             = &\ \lim\limits_{\alpha \rightarrow + \infty} \dfrac{L(x,\lambda+ \alpha \Delta \lambda,s+\alpha \Delta s;\mu,\rho)-L(x,\lambda,s;\mu,\rho)}{\alpha} \\
         =&\ -{{\rho}}\langle b, \Delta \lambda \rangle + \rho \mu (\phi0^+)(\Delta s) + \rho \langle x,
        \mathcal{A}^* \Delta \lambda + \Delta s \rangle \\
         &\  \qquad + \langle \mathcal{A}^* \Delta \lambda +\Delta s, \mathcal{A}^* \lambda +s-c\rangle+\dfrac{1}{2} \lim\limits_{\alpha\rightarrow+\infty} \alpha \Vert \mathcal{A}^*\Delta \lambda + \Delta s\Vert^2.\\
    \end{aligned}
    \end{equation}
    It follows from Lemma~\ref{phi-recession}, \eqref{Du-recession}, and \eqref{L-recession} that 
    \begin{equation}
        (L(x,\cdot,\cdot;\mu,\rho)0^+)(\Delta \lambda, \Delta s) \le 0 \implies (\Delta \lambda, \Delta s) = 0,
    \end{equation}
    which implies $L(x,\cdot,\cdot;\mu,\rho)$ is level bounded. Therefore, $\eta (x,\cdot;\mu,\rho)$ is also level bounded as $\eta (x,\cdot;\mu,\rho) = \inf\limits_{\lambda}L(x,\cdot,s;\mu,\rho) $.
\end{proof}

\begin{corollary}
    The stopping criterion 
    \begin{equation}\label{alg-stopping criteria}
        \delta^{(k,j)} \le \min \left\{ \kappa, \hat{\kappa}\right\}
    \end{equation}
    in Algorithm~\ref{alg4} can be achieved in finite steps at the $k$-th iteration.
\end{corollary}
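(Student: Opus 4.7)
The plan is to analyze Algorithm \ref{alg4} using the standard damped-Newton theory for self-concordant functions encoded in Lemma \ref{inequality 11}, splitting the iterations into a ``damped phase'' while $\delta^{(k,j)} \ge 2-\sqrt{3}$ and a ``quadratic phase'' once $\delta^{(k,j)} < 2-\sqrt{3}$, and then handling the coupling with the second threshold $\hat{\kappa}$ separately. The first part follows a well-trodden path; the second part is the only substantive subtlety.

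For the damped phase, the step size is $\alpha^{(k,j)} = 1/(1+\delta^{(k,j)})$, and I would combine Lemma \ref{inequality 11}(i) with the monotonicity of $t \mapsto t - \ln(1+t)$ on $[0,\infty)$ to obtain the uniform per-step decrease
\[
\eta(x^{(k)},\lambda^{(k,j+1)};\mu^{(k)},\rho^{(k)}) \le \eta(x^{(k)},\lambda^{(k,j)};\mu^{(k)},\rho^{(k)}) - \rho^{(k)}\mu^{(k)}\bigl((2-\sqrt{3}) - \ln(3-\sqrt{3})\bigr).
\]
Since $\eta(x^{(k)},\cdot;\mu^{(k)},\rho^{(k)})$ is level bounded by Proposition \ref{eta-level-bounded}, it attains its infimum, so the damped phase must terminate after finitely many iterations at some index $j_0$ with $\delta^{(k,j_0)} < 2-\sqrt{3}$. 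From $j_0$ onward the step size is $1$ and Lemma \ref{inequality 11}(ii) yields the geometric contraction $\delta^{(k,j+1)} \le \delta^{(k,j)}/2$, so $\delta^{(k,j)} \to 0$ and in particular $\delta^{(k,j)} \le \kappa = 1/4$ after finitely many further iterations.

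The main obstacle lies in the second component $\hat{\kappa} = 1/(\sqrt{\rho^{(k)}\mu^{(k)}}\,\|\lambda^{(k,j)}\|_2)$, which, a priori, could shrink to $0$ if $\|\lambda^{(k,j)}\|_2$ were unbounded. To rule this out I would show that $\{\lambda^{(k,j)}\}_{j\ge 0}$ is bounded. In the damped phase, the explicit decrease above keeps the iterates inside the sublevel set
\[
\mathcal{L} := \bigl\{\lambda : \eta(x^{(k)},\lambda;\mu^{(k)},\rho^{(k)}) \le \eta(x^{(k)},\lambda^{(k,0)};\mu^{(k)},\rho^{(k)})\bigr\},
\]
which is bounded by Proposition \ref{eta-level-bounded}; in the quadratic phase, the standard self-concordance fact that a full Newton step decreases $\eta$ whenever $\delta < 2-\sqrt{3}$ (combined with the contraction $\delta^{(k,j+1)} \le \delta^{(k,j)}/2$) keeps the iterates in $\mathcal{L}$ as well and in fact shows they converge to $\lambda(\mu^{(k)})$. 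Hence there exists $M>0$ with $\|\lambda^{(k,j)}\|_2 \le M$ for every $j$, so $\hat{\kappa} \ge 1/(\sqrt{\rho^{(k)}\mu^{(k)}}\,M) > 0$ and $\min\{\kappa,\hat{\kappa}\}$ is bounded below by a positive constant independent of $j$. Together with $\delta^{(k,j)} \to 0$, this shows that criterion \eqref{alg-stopping criteria} is triggered in finitely many steps, completing the proof.
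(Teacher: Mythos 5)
Your proposal is correct and follows essentially the same route as the paper: the paper's proof (phrased as a contradiction) likewise combines Lemma \ref{inequality 11} (which forces $\delta^{(k,j)}\to 0$ through the damped and quadratic phases) with the level boundedness of $\eta(x^{(k)},\cdot;\mu^{(k)},\rho^{(k)})$ from Proposition \ref{eta-level-bounded} to rule out $\Vert\lambda^{(k,j)}\Vert_2\to\infty$, i.e., to keep $\hat{\kappa}$ bounded away from zero. Your direct version merely makes the two phases and the sublevel-set containment explicit.
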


\begin{proof}
    From  Lemma~\ref{inequality 11} and Lemma~\ref{inequality 10}, if the stopping criterion is never achieved at the $k$-th iteration, $\Vert \lambda^{(k,j)} \Vert \rightarrow \infty$ when $j\rightarrow +\infty$. However, when $j$ is sufficiently large, 
    \begin{equation}
        \eta (x^{(k)},\lambda^{(k,j)};\mu^{(k)},\rho) \le \inf\limits_\lambda \eta (x^{(k)}, \lambda;\mu^{(k)},\rho)+  \rho\mu^{(k)},
    \end{equation}
    which, combined with Proposition~\ref{eta-level-bounded}, leads to a contradiction.
\end{proof}

We now show that, under mild assumptions, the sequence $\left\{ (\lambda^{(k)}, s^{(k)}) \right\}$ generated by Algorithm~\ref{algsbal} is bounded.



\begin{theorem}\label{boundedness of lambda s}
    Suppose that the subproblem 
    at each iteration is executed with the stopping criterion \eqref{alg-stopping criteria}. Then the sequence $\left\{ (\lambda^{(k)}, s^{(k)}) \right\}$ generated by Algorithm~\ref{algsbal} is bounded, and any accumulation point of this sequence is an optimal solution to Problem $\operatorname{(D)}$.
\end{theorem}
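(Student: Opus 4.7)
The plan is to deduce boundedness from the convergence result of Theorem~\ref{convergence-x} via a standard normalization argument driven by the Slater point of (P), and then to identify any accumulation point as a KKT multiplier for (D).

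First, I would apply Theorem~\ref{convergence-x}. Its hypotheses are met because, by Remark~\ref{inexact-lambda}, the criterion \eqref{alg-stopping criteria} implies the Rockafellar-type criterion \eqref{stopping criteria} with the summable sequence $\varepsilon^{(k)} = \sigma^{k/2}\sqrt{2\mu^{(0)}}$. Consequently $x^{(k)}\to\bar{x}$ (primal optimal), $r^{(k)}:=\mathcal{A}^{*}\lambda^{(k)}+s^{(k)}-c\to 0$, $\mathcal{A} x^{(k)}-b\to 0$, and $\|x^{(k)}\circ s^{(k)}\|\to 0$; via $\langle x,s\rangle=\langle x\circ s,e\rangle$, the last also yields $\langle x^{(k)},s^{(k)}\rangle\to 0$.

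To show $\{\lambda^{(k)}\}$ is bounded I would argue by contradiction. If not, extract a subsequence with $\|\lambda^{(k)}\|\to\infty$ and $\tilde{\lambda}^{(k)}:=\lambda^{(k)}/\|\lambda^{(k)}\|\to\tilde{\lambda}$, $\|\tilde{\lambda}\|=1$. Dividing $r^{(k)}\to 0$ by $\|\lambda^{(k)}\|$ gives $s^{(k)}/\|\lambda^{(k)}\|\to\tilde{s}:=-\mathcal{A}^{*}\tilde{\lambda}$, which lies in $\mathcal{K}$ by closedness. The surjectivity of $\mathcal{A}$ (hence injectivity of $\mathcal{A}^{*}$) forces $\tilde{s}\neq 0$, since $\tilde{s}=0$ would give $\tilde{\lambda}=0$. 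Picking a Slater point $x^{0}\in\operatorname{int}(\mathcal{K})$ for (P) with $\mathcal{A} x^{0}=b$, self-duality of $\mathcal{K}$ yields $\langle x^{0},\tilde{s}\rangle>0$, so
\begin{equation*}
\langle b,\tilde{\lambda}\rangle=\langle\mathcal{A}x^{0},\tilde{\lambda}\rangle=-\langle x^{0},\tilde{s}\rangle<0.
\end{equation*}
On the other hand, expanding $\langle c,x^{(k)}\rangle=\langle\mathcal{A}^{*}\lambda^{(k)}+s^{(k)}-r^{(k)},x^{(k)}\rangle$ and dividing by $\|\lambda^{(k)}\|$ yields
\begin{equation*}
\frac{\langle c,x^{(k)}\rangle}{\|\lambda^{(k)}\|}=\langle b,\tilde{\lambda}^{(k)}\rangle+\langle\tilde{\lambda}^{(k)},\mathcal{A}x^{(k)}-b\rangle+\frac{\langle s^{(k)},x^{(k)}\rangle-\langle r^{(k)},x^{(k)}\rangle}{\|\lambda^{(k)}\|},
\end{equation*}
and passing to the limit gives $\langle b,\tilde{\lambda}\rangle=0$, because $x^{(k)}$ is bounded while $\|\lambda^{(k)}\|\to\infty$, $\mathcal{A}x^{(k)}-b\to 0$, and $\tilde{\lambda}^{(k)}$ is a unit vector. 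This contradicts the strict inequality above, so $\{\lambda^{(k)}\}$ is bounded; then $s^{(k)}=c-\mathcal{A}^{*}\lambda^{(k)}+r^{(k)}$ is bounded as well.

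For the second assertion, let $(\lambda^{*},s^{*})$ be any accumulation point of $\{(\lambda^{(k)},s^{(k)})\}$. By closedness of $\mathcal{K}$, $s^{*}\in\mathcal{K}$; from $r^{(k)}\to 0$, $\mathcal{A}^{*}\lambda^{*}+s^{*}=c$; and continuity of $\circ$ with $\|x^{(k)}\circ s^{(k)}\|\to 0$ gives $\bar{x}\circ s^{*}=0$. These, combined with the primal feasibility and optimality of $\bar{x}$, constitute the full KKT system for (P)-(D), so $(\lambda^{*},s^{*})$ solves (D). The main obstacle I anticipate is in the normalization step: securing $\tilde{s}\neq 0$ via injectivity of $\mathcal{A}^{*}$ and then invoking self-duality of $\mathcal{K}$ to produce the strict inequality $\langle x^{0},\tilde{s}\rangle>0$, which is what ultimately clashes with the primal-dual identity $\langle b,\tilde{\lambda}\rangle=0$.
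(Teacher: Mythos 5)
Your proof is correct, but it takes a genuinely different route from the paper's. The paper first shows that $-\langle b,\lambda^{(k)}\rangle$ is bounded above by expanding $\langle c,x^{(k+1)}\rangle$ through the update formula and Lemma~\ref{center path}, and it is precisely here that the extra factor $\hat{\kappa}=1/(\sqrt{\rho^{(k)}\mu^{(k)}}\,\Vert\lambda^{(k,j)}\Vert_2)$ in \eqref{alg-stopping criteria} is invoked, to control the product $\Vert\lambda^{(k+1)}\Vert_2\,\Vert\nabla_\lambda\eta\Vert_2$; boundedness of $\{(\lambda^{(k)},s^{(k)})\}$ then follows from compactness of the dual optimal set (Robinson's constraint qualification) together with Rockafellar's recession-cone results. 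You instead run a normalization/contradiction argument against a primal Slater point: the limit direction $\tilde{s}=-\mathcal{A}^{*}\tilde{\lambda}$ lies in $\mathcal{K}\setminus\{0\}$ by closedness of $\mathcal{K}$ and injectivity of $\mathcal{A}^{*}$, forcing $\langle b,\tilde{\lambda}\rangle<0$, while the asymptotic identity built from the vanishing KKT residuals forces $\langle b,\tilde{\lambda}\rangle=0$. Your argument needs only the conclusions of Theorem~\ref{convergence-x} (legitimately imported via Remark~\ref{inexact-lambda}), surjectivity of $\mathcal{A}$, and the Slater condition; notably it never uses the $\hat{\kappa}$ part of the stopping rule, only $\delta^{(k,j)}\le\kappa$, and it avoids the recession-cone machinery entirely. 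The trade-off is that you lean more heavily on part (ii) of Theorem~\ref{convergence-x} --- in particular $\Vert x^{(k)}\circ s^{(k)}\Vert\to 0$, which is essential in your limit computation to kill the $\langle s^{(k)},x^{(k)}\rangle/\Vert\lambda^{(k)}\Vert$ term (mere nonnegativity of that term would only give $\langle b,\tilde{\lambda}\rangle\le 0$ and no contradiction) --- whereas the paper's proof uses only the dual-feasibility residual from that theorem. Your explicit KKT verification for the accumulation point also fills in a step the paper leaves terse.
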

\begin{proof}
   Since the Slater condition holds for both Problem ($\operatorname{P}$) and Problem ($\operatorname{D}$), and $\mathcal{A}$ is of full row rank, Robinson's constraint qualification \cite[Definition 2.86]{bonnans2013perturbation} holds, which implies that the optimal solution set of Problem ($\operatorname{D}$) is compact \cite[Theorem 3.9]{bonnans2013perturbation}. Next, we show that 
    $-\langle  b, \lambda^{(k)}\rangle$ is bounded from above. By the update of $x^{(k+1)}$,
    $$
    \begin{aligned}
         \langle c, x^{(k+1)} \rangle & = \langle -\rho (x^{(k+1)}-x^{(k)}) + \mathcal{A}^* \lambda^{(k+1)}+s^{(k+1)}, x^{(k+1)} \rangle  \\
         & =-\rho \langle x^{(k+1)}-x^{(k)},x^{(k+1)}\rangle \\
         & \quad \qquad+ \mu^{(k)} \nu + 
         \langle \lambda^{(k+1)},\mathcal{A}x^{(k+1)}-b \rangle + \langle b, \lambda^{(k+1)}\rangle\\
         & = -\rho \langle x^{(k+1)}-x^{(k)},x^{(k+1)}\rangle + \mu^{(k)} \nu\\
         & \quad \qquad  + 
         \langle \lambda^{(k+1)},\nabla_{\lambda}\eta (x^{(k)},\lambda^{(k+1) };\mu^{(k)},\rho) \rangle + \langle b, \lambda^{(k+1)}\rangle,\\
         & \le \rho \Vert x^{(k+1)}-x^{(k)} \Vert \Vert x^{(k+1)}\Vert + \mu^{(k)} \nu  \\
         & \quad \qquad + 
         \Vert \lambda^{(k+1)} \Vert_2 \Vert \nabla_{\lambda}\eta (x^{(k)},\lambda^{(k+1) };\mu^{(k)},\rho) \Vert_2 + \langle b, \lambda^{(k+1)}\rangle.\\
    \end{aligned}
    $$
    Then 
    \begin{equation}\label{b,lambda-bound}
       \begin{aligned}
            -\langle b ,\lambda^{(k+1)} \rangle & \le  -\langle c, x^{(k+1)} \rangle
             + \rho \Vert x^{(k+1)}-x^{(k)} \Vert \Vert x^{(k+1)}\Vert + \mu^{(k)} \nu  \\
         & \quad \qquad + 
         \Vert \lambda^{(k+1)} \Vert_2 \Vert \nabla_{\lambda}\eta (x^{(k)},\lambda^{(k+1) };\mu^{(k)},\rho) \Vert_2.\\
       \end{aligned}
    \end{equation}
    From Remark~\ref{delta-estimate}, 
    \begin{equation}
    \begin{aligned}
         &\ \delta^{(k,j)}(x^{(k)},\lambda^{(k+1)}, \mu^{(k)},\rho)\\
          = &\ \sqrt{\frac{1}{\rho \mu^{(k)}}}\Vert \mathcal{A}z^{(k+1)}-\rho b\Vert^*_{\eta(x^{(k)},\lambda^{(k+1)}, \mu^{(k)},\rho)}\\
          \ge &\ \sqrt{\frac{1}{\rho \mu^{(k)}}} \sqrt{\langle  \mathcal{A}z^{(k+1)}-\rho b, {(\mathcal{A} \mathcal{A}^*)^{-1}} \left( \mathcal{A}z^{(k+1)} - \rho b\right) \rangle} \\
          \ge &\ \sqrt{ \frac{ 1 }{\rho \mu^{(k)}\lambda_{\max} (\mathcal{A}\mathcal{A}^*)} }  \Vert \nabla_{\lambda} \eta (x^{(k)},\lambda^{(k+1) };\mu^{(k)},\rho) \Vert_2,
    \end{aligned}
    \end{equation}
    which with the stopping criterion 
        \begin{equation}
        \delta^{(k,j)}(x^{(k)},\lambda^{(k+1)}, \mu^{(k)},\rho) \le \min \left\{ \kappa, \frac{1}{\sqrt{\rho \mu^{(k)}} \Vert \lambda^{(k+1)}\Vert_2 }\right\}
    \end{equation}
    implies that 
    \begin{equation}\label{s,lambda-bound-1}
        \Vert \lambda^{(k+1)} \Vert_2 \Vert \nabla_{\lambda}\eta (x^{(k)},\lambda^{(k+1) };\mu^{(k)},\rho) \Vert_2 < +\infty.
    \end{equation}
    The convergence of $\left\{ x^{(k)} \right\}_{k=1}^\infty$ as obtained in Theorem~\ref{convergence-x} indicates that 
    \begin{equation}\label{s,lambda-bound-2}
        -\langle c, x^{(k+1)} \rangle
             + \rho \Vert x^{(k+1)}-x^{(k)} \Vert \Vert x^{(k+1)}\Vert < + \infty.
    \end{equation}
    Combining \eqref{s,lambda-bound-1}, \eqref{s,lambda-bound-2}, and \eqref{b,lambda-bound}, we then obtain the boundedness of $-\langle b, \lambda^{(k)}\rangle$. Since
    \begin{equation}
        \Vert\mathcal{A}^*\lambda^{(k)}+s^{(k)}-c\Vert \rightarrow 0 \quad \text{when}\ k \rightarrow +\infty,
    \end{equation}
     $s^{(k)} \in \K$, and the optimal solution set of Problem $(\operatorname{D})$ is compact, $\left\{ (\lambda^{(k)},s^{(k)}) \right\}$ is then bounded following from \cite[Corollary 8.3.3, Theorem 8.4, Theorem 8.7]{rockafellar1970convex}. The accumulation point is also optimal by Theorem~\ref{convergence-x}. 
\end{proof}

     Next, we show the required decrease in the function value for the subproblem is uniformly bounded before termination.  Let $x(\mu)$ be the unique solution to Problem $(\operatorname{P_{\mu}})$ from \eqref{cone problem with barrier}, and let ${\rm val}\, (\operatorname{P_{\mu}})$ be the optimal value of Problem $(\operatorname{P_{\mu}})$. Let 
    \begin{equation}
        \begin{array}{ll}
            C_1:=\sup\limits_{\mu\le \mu^{(0)}} \left\{\Vert \bar{x} \Vert,\Vert x(\mu) \Vert \right\},
            &C_2 : =\sup\limits_{k} \left\{ \Vert \lambda^{(k)} \Vert , \Vert s^{(k)} \Vert \right\},\\
            C_3: = \Vert x^{(0)} -  \bar{x} \Vert,
            &C_4: = 4 (C_2+C_3+ \frac{1}{1-\sqrt{\sigma}})(C_3 + \frac{1}{1-\sqrt{\sigma}}).
        \end{array}
    \end{equation}
    The existence of the constant $C_1$ is guaranteed by \cite[Theorem 3.3]{ye2004linear}, which shows that $x(\mu)$ is uniformly bounded for any $0 < \mu \le \mu^{(0)}$. Likewise, Theorem~\ref{boundedness of lambda s} ensures the existence of the constant $C_2$.

\begin{proposition}\label{boundedness of difference-final}
		At the $(k+1)$-th iteration $(k\ge 0)$, we have
		$$
        \begin{aligned}
           & \eta (x^{(k+1)},\lambda^{(k+1)};\mu^{(k+1)},\rho)-\inf\limits_{\lambda}\eta (x^{(k+1)},\lambda;\mu^{(k+1)},\rho)\\
            & \qquad \qquad \qquad \le \mathcal{O}(1) \rho \nu \mu^{(k+1)}+ \rho C_4+ 4\rho C_1^2+2\rho C_3^2+2\rho \dfrac{1}{(1-\sqrt{\sigma})^2}.
        \end{aligned}
$$ 
\end{proposition}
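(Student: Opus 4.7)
The plan is to split the gap into three pieces, isolating the effect of reducing $\mu$ from that of updating $x$:
\begin{align*}
& \eta(x^{(k+1)},\lambda^{(k+1)};\mu^{(k+1)},\rho) - \inf_{\lambda} \eta(x^{(k+1)},\lambda;\mu^{(k+1)},\rho) \\
& \quad = \underbrace{\bigl[\eta(x^{(k+1)},\lambda^{(k+1)};\mu^{(k+1)},\rho) - \eta(x^{(k)},\lambda^{(k+1)};\mu^{(k+1)},\rho)\bigr]}_{\text{(A)}} \\
& \qquad + \underbrace{\bigl[\eta(x^{(k)},\lambda^{(k+1)};\mu^{(k+1)},\rho) - \inf_{\lambda} \eta(x^{(k)},\lambda;\mu^{(k+1)},\rho)\bigr]}_{\text{(B)}} \\
& \qquad + \underbrace{\bigl[\inf_{\lambda} \eta(x^{(k)},\lambda;\mu^{(k+1)},\rho) - \inf_{\lambda} \eta(x^{(k+1)},\lambda;\mu^{(k+1)},\rho)\bigr]}_{\text{(C)}}.
\end{align*}
Piece (B) falls directly under Lemma \ref{inequality 13}: the Algorithm \ref{alg4} stopping rule forces $\delta^{(k,j)}\le\kappa=1/4$, so Lemma \ref{inequality 11}(iii) gives $\widetilde{\delta}(x^{(k)},\lambda^{(k+1)};\mu^{(k)},\rho) < 1-(1/4)^{1/3} < 1/2$, and Lemma \ref{inequality 13} applied at $(x^{(k)},\lambda^{(k+1)})$ with $\mu=\mu^{(k)}$, $\mu^{+}=\mu^{(k+1)}$, and $\widetilde{\beta}=1/2$ bounds (B) by $\mathcal{O}(1)\rho\nu\mu^{(k+1)}$.

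For (A), I would exploit the strict concavity of $\eta(\cdot,\lambda;\mu,\rho)$ in $x$ together with the explicit gradient $\nabla_x\eta=\rho(\mathcal{A}^*\lambda+s(x,\lambda;\mu,\rho)-c)$ supplied by Theorem \ref{gradent and hessian of eta}. Concavity gives
$
\text{(A)}\le \rho\bigl\langle \mathcal{A}^*\lambda^{(k+1)}+s(x^{(k)},\lambda^{(k+1)};\mu^{(k+1)},\rho)-c,\; x^{(k+1)}-x^{(k)}\bigr\rangle,
$
and substituting the update $\rho(x^{(k+1)}-x^{(k)})=\mathcal{A}^*\lambda^{(k+1)}+s^{(k+1)}-c$ together with the Cauchy--Schwarz inequality and the uniform bound $\Vert \lambda^{(k+1)}\Vert,\Vert s^{(k+1)}\Vert\le C_2$ from Theorem \ref{boundedness of lambda s} produces a contribution of order $\rho C_4$. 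For (C), I would use the primal identity underlying \eqref{augmented Lagrangian trans}, namely
$
-\tfrac{1}{\rho}\inf_{\lambda}\eta(x,\lambda;\mu,\rho)=\Psi(x,\mu):=\min_{\tilde{x}\in\mathcal{F}_{\operatorname{P_{\mu}}}}\bigl[\langle c,\tilde{x}\rangle+\mu\varphi(\tilde{x}/\mu)+\tfrac{\rho}{2}\Vert \tilde{x}-x\Vert^{2}\bigr].
$
Inserting $\tilde{x}=x(\mu^{(k+1)})$ as a test point in $\Psi(x^{(k)},\mu^{(k+1)})$ and $\Psi(x^{(k+1)},\mu^{(k+1)})$ and exploiting $\Psi(x,\mu)\ge {\rm val}\,(\operatorname{P_{\mu}})$ yields
$
|\Psi(x^{(k+1)},\mu^{(k+1)})-\Psi(x^{(k)},\mu^{(k+1)})|\le \tfrac{\rho}{2}\bigl(\Vert x(\mu^{(k+1)})-x^{(k+1)}\Vert^{2}+\Vert x(\mu^{(k+1)})-x^{(k)}\Vert^{2}\bigr),
$
and expanding the squares via $\Vert x(\mu^{(k+1)})\Vert\le C_1$ and the triangle inequality supplies the $4\rho C_1^{2}+2\rho C_3^{2}+2\rho/(1-\sqrt{\sigma})^{2}$ contributions.

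The main obstacle is the uniform estimate $\Vert x^{(k)}-\bar{x}\Vert \le C_3+\mathcal{O}(1)/(1-\sqrt{\sigma})$ that is implicitly needed to absorb the iterate-norm terms appearing in both (A) and (C). This should follow by iterating Rockafellar's quantitative proximal-point contraction for the maximal monotone operator $\mathcal{C}_{(\rho,\sigma)}\mathcal{T}$ of Lemma \ref{maximal monotone operator omega}, combined with the summability $\sum_{k=0}^{\infty}\varepsilon^{(k)}\le \sqrt{2\mu^{(0)}}/(1-\sqrt{\sigma})$ supplied by Remark \ref{inexact-lambda}. Once this displacement bound is in hand, summing the estimates for (A), (B) and (C) recovers the target bound $\mathcal{O}(1)\rho\nu\mu^{(k+1)}+\rho C_4+4\rho C_1^{2}+2\rho C_3^{2}+2\rho/(1-\sqrt{\sigma})^{2}$.
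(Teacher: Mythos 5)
Your decomposition into (A)+(B)+(C) is exactly the telescoping the paper performs implicitly: it bounds (A) by concavity of $\eta$ in $x$ together with the gradient formula \eqref{gradient of Lx2} and the proximal-point displacement bound, (B) by Lemma \ref{inequality 13} with $\widetilde{\beta}=\tfrac12$, and (C) by comparing both infima to $-{\rm val}\,(\operatorname{P}_{\mu^{(k+1)}})$ through the proximal identity \eqref{augmented Lagrangian trans} with test point $x(\mu^{(k+1)})$. Your identification of the auxiliary estimate $\Vert x^{(k+1)}-x^{(k)}\Vert\le 2C_3+2/(1-\sqrt{\sigma})$ via Rockafellar's nonexpansiveness plus the summable subproblem errors is also precisely what the paper invokes in \eqref{thm-final-inq3}. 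So the route is essentially the paper's.

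One ingredient is missing from your treatment of (A). The gradient $\nabla_x\eta(x^{(k)},\lambda^{(k+1)};\mu^{(k+1)},\rho)$ involves $s(x^{(k)},\lambda^{(k+1)};\mu^{(k+1)},\rho)$ evaluated at the \emph{new} barrier parameter $\mu^{(k+1)}=\sigma\mu^{(k)}$, whereas the update rule and the constant $C_2$ only control $s^{(k+1)}=s(x^{(k)},\lambda^{(k+1)};\mu^{(k)},\rho)$. To close your Cauchy--Schwarz estimate you need to control $\Vert s(x^{(k)},\lambda^{(k+1)};\sigma\mu^{(k)},\rho)\Vert$ as well; the paper does this as the very first step of its proof by showing that $\mu\mapsto\Vert s(x,\lambda;\mu,\rho)\Vert$ is nondecreasing, via
$\frac{d}{d\mu}\bigl\{\tfrac12\Vert s(x,\lambda;\mu,\rho)\Vert^2\bigr\}=\rho\langle s,(I+\rho\mu\nabla_s^2\phi(s))^{-1}\nabla_s^2\phi(s)s\rangle\ge 0$,
so that $\Vert s(x^{(k)},\lambda^{(k+1)};\sigma\mu^{(k)},\rho)\Vert\le\Vert s^{(k+1)}\Vert\le C_2$ and the difference of the two $s$-values is bounded by $2C_2$. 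Without this monotonicity argument the bound $\rho C_4$ for (A) does not follow from the constants as you have defined them; with it, your sketch matches the paper's proof step for step.
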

\begin{proof}
    First, we show that $\Vert s(x,\lambda;\sigma \mu, \rho)\Vert \le \Vert s(x,\lambda; \mu, \rho)\Vert$. Differentiating the function $\frac{1}{2}\Vert s(x,\lambda;\mu, \rho)\Vert^2$ with respect to $\mu$, we obtain 
    $$
    \begin{aligned}
          \frac{d}{d\mu}\left\{ \frac{1}{2}\Vert s(x,\lambda; \mu, \rho)\Vert^2  \right\} &  = - \rho \langle s, H^{-1} \nabla_s \phi (s) \rangle\\
         & = \rho \langle s, (I + \rho \mu \nabla_{s}^2 \phi (s) )^{-1} \nabla_{s}^2 \phi (s) s \rangle \\
         & \ge 0.
    \end{aligned}
    $$
    Thus, $\Vert s(x,\lambda;\sigma \mu, \rho)\Vert \le \Vert s(x,\lambda; \mu, \rho)\Vert$. Since $\eta (x,\lambda;\mu,\rho)$ is concave in $x$, it follows that
    \begin{equation}\label{thm-final-inq1}
    \begin{array}{ll}
     \eta (x^{(k+1)},\lambda^{(k+1)};\mu^{(k+1)},\rho)  \le \eta (x^{(k)},\lambda^{(k+1)};\mu^{(k+1)},\rho)\\
     \qquad \qquad \qquad \qquad + \langle \nabla_{x} \eta (x^{(k)},\lambda^{(k+1)};\mu^{(k+1)},\rho), x^{(k+1)}-x^{(k)}\rangle. \\
    \end{array}
    \end{equation}
    By \eqref{gradient of eta with x} and the monotonicity of $\Vert s(x^{(k)},\lambda^{(k+1)};\cdot,\rho)\Vert$, 
    \begin{equation}\label{thm-final-inq2}
    \begin{aligned}
         &\ \Vert \nabla_{x} \eta (x^{(k)},\lambda^{(k+1)};\mu^{(k+1)},\rho)\Vert \\
         =&\
         \rho \Vert \mathcal{A}^* \lambda^{(k+1)} + s (x^{(k)},\lambda^{(k+1)};\sigma \mu^{(k)},\rho)-c\Vert\\
          \le&\ \rho \Vert  s (x^{(k)},\lambda^{(k+1)}; \mu^{(k)},\rho) -  s (x^{(k)},\lambda^{(k+1)}; \sigma \mu^{(k)},\rho)\Vert + \rho \Vert x^{(k+1)}-x^{(k)}\Vert \\
          \le &\ 2\rho C_2 + \rho \Vert x^{(k+1)}-x^{(k)} \Vert.
    \end{aligned}
    \end{equation}
    It follows from the properties of maximal monotone operator (see \cite[Theorem 1]{rockafellar1976monotone}), together with \cite[Proposition 6]{rockafellar1976augmented} and the stopping criterion (\ref{stopping criteria-exact}), that
    \begin{equation}\label{thm-final-inq3}
        \Vert x^{(k+1)}-x^{(k)}\Vert \le 2\Vert x^{(k)}-\bar{x} \Vert+ \sqrt{\mu^{(k)}}
        \le 2C_3 + \frac{2}{1-\sqrt{\sigma}}.
    \end{equation}
    Combining (\ref{thm-final-inq1}), (\ref{thm-final-inq2}) and (\ref{thm-final-inq3}),
    \begin{equation}\label{thm-final-inq4}
    \begin{aligned}
        &\ \eta (x^{(k+1)},\lambda^{(k+1)};\mu^{(k+1)},\rho)\\
         \le &\ \eta (x^{(k)},\lambda^{(k+1)};\mu^{(k+1)},\rho) + 4\rho \left( C_2+C_3+ \frac{1}{1-\sqrt{\sigma}}\right) \left(C_3 + \frac{1}{1-\sqrt{\sigma}}\right) \\
          \stackrel{\text{(i)}}{\le} &\  \inf\limits_{\lambda}\eta (x^{(k)},\lambda;\mu^{(k+1)},\rho) + \mathcal{O}(1) \rho \nu \mu^{(k+1)}+ \rho C_4\\
          \stackrel{\text{(ii)}}{\le} &\ -{\rm val}\, (\operatorname{P}_{\mu^{(k+1)}}) + \mathcal{O}(1) \rho \nu \mu^{(k+1)}+ \rho C_4,
    \end{aligned}
    \end{equation}
    where (i) follows from Lemma~\ref{inequality 13}, and (ii) comes from (\ref{augmented Lagrangian trans}) and the definition of proximal mapping. Moreover, by (\ref{augmented Lagrangian trans}) and the definition of proximal mapping, 
    \begin{equation}\label{thm-final-inq5}
    \begin{aligned}
        &\ \inf\limits_{\lambda}\eta (x^{(k+1)},\lambda;\mu^{(k+1)},\rho)   \\
         \ge &\ -{\rm val}\, (\operatorname{P}_{\mu^{(k+1)}})- \frac{\rho}
         {2}\Vert x(\mu^{(k+1)})-x^{(k+1)}\Vert^2\\
         \ge &\ -{\rm val}\, (\operatorname{P}_{\mu^{(k+1)}})- \rho \Vert x(\mu^{(k+1)})-\bar{x}\Vert^2 - \rho \Vert \bar{x} 
            - x^{(k+1)}\Vert^2 \\
        \stackrel{\text{(iii)}}{\ge} &\ -{\rm val}\, (\operatorname{P}_{\mu^{(k+1)}})- \rho \Vert x(\mu^{(k+1)})-\bar{x}\Vert^2 - \rho \left(\Vert \bar{x} 
            - x^{(0)}\Vert+ \frac{1}{1-\sqrt{\sigma}}\right)^2  \\
         \ge &\ -{\rm val}\, (\operatorname{P}_{\mu^{(k+1)}})- 4\rho C_1^2-2\rho C_3^2-2\rho \frac{1}{(1-\sqrt{\sigma})^2},
    \end{aligned}
    \end{equation}
    where (iii) is obtained similar to that in (\ref{thm-final-inq3}). Consequently, by (\ref{thm-final-inq4}) and (\ref{thm-final-inq5}), 
    \begin{equation}\label{thm-final-inq6}
        \begin{array}{ll}
             \eta (x^{k+1)},\lambda^{(k+1)};\mu^{(k+1)},\rho)-\inf\limits_{\lambda}\eta (x^{(k+1)},\lambda;\mu^{(k+1)},\rho) \\
             \qquad \qquad \qquad \le \mathcal{O}(1) \rho \nu \mu^{(k+1)}+ \rho C_4+ 4\rho C_1^2+2\rho C_3^2+2\rho \dfrac{1}{(1-\sqrt{\sigma})^2}.
        \end{array}
    \end{equation}
    Thus, the conclusion is proven.
\end{proof}

Based on the above conclusions, we can present the most important result of this paper, which demonstrates that the \NAL has the $\mathcal{O}(1/\epsilon)$ complexity bound.

\begin{theorem}\label{iteration complexity}
		Algorithm~\ref{algsbal} requires at most  $\mathcal{O}\left( 1/\epsilon\right)$ iterations to attain the desired accuracy $\epsilon$.
\end{theorem}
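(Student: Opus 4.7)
The plan is to decompose the total Newton count of Algorithm~\ref{algsbal} into (outer iterations)\,$\times$\,(Newton steps per outer call) and then collapse the resulting geometric series. Since $\mu^{(k+1)}=\sigma\mu^{(k)}$ with $\sigma\in(0,1)$, the smallest $K$ satisfying $\mu^{(K)}\le\epsilon$ is $K=\mathcal{O}(\log(\epsilon^{-1}))$. I would first verify, using Lemma~\ref{center path} (which gives $\langle s,z\rangle=\rho\mu\nu$), the inexact tolerance~\eqref{stopping criteria-exact}, and the boundedness of $\{(\lambda^{(k)},s^{(k)})\}$ from Theorem~\ref{boundedness of lambda s}, that $\mu^{(K)}\le\epsilon$ renders the three components of the KKT residual~\eqref{KKT-residual} of order $\epsilon$. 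It therefore suffices to bound the cumulative Newton workload until $\mu^{(k)}\le\epsilon$.

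For the per-outer workload, Proposition~\ref{boundedness of difference-final} supplies an \emph{absolute} constant $M$ (independent of $k$, since $\mu^{(k+1)}\le\mu^{(0)}$) such that
\[
\eta(x^{(k+1)},\lambda^{(k+1)};\mu^{(k+1)},\rho)-\inf_{\lambda}\eta(x^{(k+1)},\lambda;\mu^{(k+1)},\rho)\le M.
\]
The Newton loop of Algorithm~\ref{alg4} then splits into two phases. While $\delta^{(k,j)}\ge 2-\sqrt{3}$, Lemma~\ref{inequality 11}(i) forces a per-step decrease of $\eta$ by at least $\alpha(\mu^{(k+1)})\,c_0=c_0\rho\mu^{(k+1)}$, with $c_0:=(2-\sqrt{3})-\ln(3-\sqrt{3})>0$, so this damped phase lasts at most $M/(c_0\rho\mu^{(k+1)})=\mathcal{O}(1/\mu^{(k+1)})$ steps. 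Once $\delta^{(k,j)}<2-\sqrt{3}$, Lemma~\ref{inequality 11}(ii) provides the quadratic contraction $(\delta/(1-\delta))^2\le\delta/2$, so the composite stopping threshold $\min\{\kappa,\hat\kappa\}$ is reached in at most $\mathcal{O}(\log(1/\mu^{(k+1)}))$ further steps; the boundedness of $\|\lambda^{(k,j)}\|_2$ from Theorem~\ref{boundedness of lambda s} keeps $\hat\kappa$ from shrinking faster than polynomially in $\mu^{(k+1)}$. The per-outer total is therefore $\mathcal{O}(1/\mu^{(k+1)})$, dominated by the damped phase.

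Summing over the outer loop collapses the geometric series:
\[
\sum_{k=0}^{K-1}\mathcal{O}(1/\mu^{(k+1)})=\mathcal{O}\!\Bigl(\tfrac{1}{\mu^{(0)}}\sum_{k=0}^{K-1}\sigma^{-(k+1)}\Bigr)=\mathcal{O}(1/\mu^{(K)})=\mathcal{O}(\epsilon^{-1}),
\]
which is the announced bound. The main obstacle I expect is the very first step, namely translating $\mu^{(K)}\le\epsilon$ into the three-component accuracy~\eqref{KKT-residual}: primal feasibility $\|\mathcal{A}x^{(K)}-b\|$ must be controlled through the update $x^{(K)}=\tfrac{1}{\rho}z(x^{(K-1)},\lambda^{(K)};\mu^{(K-1)},\rho)$ combined with the inexact tolerance~\eqref{stopping criteria-exact}, while complementarity $\|x^{(K)}\circ s^{(K)}\|$ requires Lemma~\ref{center path} plus a perturbation estimate that accounts for the inexactness of $\lambda^{(K)}$. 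The remaining work is pure bookkeeping of the constants in Proposition~\ref{boundedness of difference-final} and verifying that the logarithmic quadratic-phase overhead is absorbed by the $\mathcal{O}(1/\mu^{(k+1)})$ damped-phase count.
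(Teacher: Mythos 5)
Your proposal is correct and follows essentially the same route as the paper, which derives the result directly from Proposition~\ref{boundedness of difference-final} (uniform bound on the subproblem gap) and Lemma~\ref{inequality 11} (per-step decrease of order $\rho\mu^{(k+1)}$ in the damped phase), yielding $\mathcal{O}(1/\mu^{(k+1)})$ Newton steps per outer iteration and a geometric sum of order $\mathcal{O}(\epsilon^{-1})$. Your additional bookkeeping on the quadratic phase and the KKT residuals only fleshes out details the paper leaves implicit.
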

\begin{proof}
	First, we estimate the number of iterations for the outer loop, denoted as $N_{\text{out}}$. Assume that at the $k$-th iteration, $\mu^{(k)}\leq\epsilon$ holds. Notice that 
	$
	\mu^{(k)}=\sigma \mu^{(k-1)}=\cdots=\sigma^k \mu^{(0)},
	$ and therefore, 
	$
	k\geq \ln(\frac{\mu^{(0)}}{\epsilon})/\ln(\frac{1}\sigma).
	$
	 This implies that $N_{\text{out}}\leq \ln(\frac{\mu^{(0)}}{\epsilon})/\ln(\frac{1}\sigma)+1.$

	Then we estimate the number of iterations for the $k$-th inner loop, denoted as $N_{\text{in}}^{(k)}$.
    At the $k$-th iteration,  recall that when the stopping criterion is obtained,
    $$
        \hat{\kappa} =  \frac{1}{\sqrt{\rho \mu^{(k)}} \Vert \lambda^{(k)} \Vert_2} \ge \dfrac{1}{\sqrt{\rho \mu^{(k)}}C_2}.
    $$
    If  $\delta^{(k,j)} \geq \hat{\kappa} \ge \kappa $ for  $j=1,\dots,l$, then
$$\delta^{(k,j)}-\ln(1+\delta^{(k,j)})\geq  \kappa-\ln(1+\kappa).$$
 According to Lemma~\ref{inequality 11}(i), since $\kappa - \ln (1+\kappa) \ge 0.02$, we have	
 \begin{equation}
 	\begin{aligned}
    \inf\limits_\lambda \eta(x^{(k)},\lambda;\mu^{(k)},\rho)&\leq\eta(x^{(k)},\lambda^{(k,l)};\mu^{(k)},\rho)\\&\leq\eta(x^{(k)},\lambda^{(k,0)};\mu^{(k)},\rho) -0.02\rho \sigma^{k}\mu^{(0)} l.
 	\end{aligned}
 \end{equation}
Therefore, from Proposition~\ref{boundedness of difference-final}, after
$$
l=\left\lfloor\dfrac{\mathcal{O}(1) \rho \nu \mu^{(k+1)}+ \rho C_4+ 4\rho C_1^2+2\rho C_3^2+2\rho \frac{1}{(1-\sqrt{\sigma})^2}}{\rho \mu^{(0)}} \dfrac{50}{\sigma^{k}}\right\rfloor
$$ 
iterations, $\delta^{(k,l)}$ $\leq2-\sqrt{3}$. Lemma~\ref{inequality 11}(ii) states that following one more Newton iteration, $\delta^{(k,l+1)}\leq\frac{2-\sqrt{3}}{2}\le \kappa = \min \{ \kappa,\hat{\kappa} \}$ is satisfied. Let 
\begin{equation}
    C_5: = \mathcal{O}(1) \rho \nu \mu^{(k+1)}+ \rho C_4+ 4\rho C_1^2+2\rho C_3^2+2\rho \frac{1}{(1-\sqrt{\sigma})^2}.
\end{equation}
If $\hat{\kappa} \le \kappa \le \delta^{(k,j)}$, similar to the preceding discussions, after at most 
\begin{equation}
    l=\left\lfloor\dfrac{C_5}{\rho \mu^{(0)}} \dfrac{50}{\sigma^{k}}\right\rfloor+1
\end{equation}
iterations, we have $\delta^{(k,l)} \le \kappa$. It then follows from Lemma~\ref{inequality 11}(ii) that after at most
\begin{equation}
    l = \left\lfloor\dfrac{C_5}{\rho \mu^{(0)}} \dfrac{50}{\sigma^{k}}\right\rfloor+1+ \left\lfloor \dfrac{\ln \left( \kappa \sqrt{\rho \mu^{(k)}}C_2 \right)}{\ln 2} \right\rfloor
\end{equation}
iterations, $\delta^{(k,j)} \le \hat{\kappa} = \min\{ \kappa, \hat{\kappa} \}$. Let 
$$
\begin{aligned}
  C_{6}
  := &\left\lfloor\dfrac{C_5}{\rho \mu^{(0)}} \right\rfloor+ \left\lfloor \dfrac{\ln \left( \kappa \sqrt{\rho \mu^{(0)}}C_2 \right)}{\ln 2} \right\rfloor+2.
\end{aligned}
$$
Consequently, at the $k$-th iteration, we need at most $N_{\text{in}}^{(k)}:= \left\lfloor \dfrac{C_{6}}{\sigma^k} \right\rfloor$ iterations to stop. The total number of iterations $N$ is 
\begin{equation}
    N = \sum_{k=1}^{N_{\text{out}}}N_{\text{in}}^{(k)}  = \sum_{k=1}^{\mathcal{O}(\ln(\frac{1}{\epsilon}))} \left\lfloor \dfrac{C_{6}}{\sigma^k} \right\rfloor=\mathcal{O}\left(1/{\epsilon} \right).
\end{equation}
This completes the proof.
\end{proof}

\section{Computational results}\label{sec5}

In this section, we evaluate the efficiency of Algorithm~\ref{algsbal} by testing it on three classical SCP problems: SDP, SOCP, and LP. For the SDP problems, we test 43 instances from the SDPLIB\footnote{\url{https://github.com/vsdp/SDPLIB/tree/master/data/}} benchmark, which include SDP relaxations of graph partitioning problems (GPP), max-cut problems (MCP), box-constrained quadratic programming problems (QP), and  Lov$\acute{\text{a}}$sz theta problems (Theta). Regarding the SOCP problems, we focus on two categories with 28 instances:  minimum enclosing ball (MEB) problems and square-root Lasso problems. We also test 114 LP instances from the Netlib\footnote{\url{https://netlib.org/lp/data/}} benchmark. The results are compared with those from the competitive solvers ABIP \cite{lin2021admm}, SDPNAL+ \cite{yang2015sdpnal+}, SeDuMi \cite{sturm1999using}, Clarabel \cite{goulart2024clarabel} and Hypatia \cite{coey2023performance}. Detailed numerical results are provided in Section~\ref{Additional computational results}.

 All the computational results presented in this paper were obtained on a Windows 10 personal computer equipped with an Intel i5-8300H processor (4 cores, 8 threads, 2.3 GHz) and 16 GB of RAM. The \NAL for the tested problems was implemented in MATLAB 2022a. In the experiment, we set the initial barrier parameter $\mu^{(0)} = 0.1$ and the penalty parameter $\rho^{(0)} = 1$. 
 

An approximate solution is declared when the following condition is satisfied:
$$
    \max \left\{ {\rm pinfeas}, {\rm dinfeas}, \mu \right\} \le {\rm tol},
$$
where the relative primal and dual infeasibilities are defined as
$$
    {\rm pinfeas}: = \frac{\Vert \mathcal{A}z-\rho b \Vert}{1+\Vert b \Vert}, \ {\rm dinfeas}:= \frac{\Vert \mathcal{A}^* \lambda + s -c \Vert}{1+\Vert c \Vert}.
$$
By default, we set the maximum number of iterations to 100 and ${\rm tol}=10^{-6}$.

\subsection{Condition number}
\begin{figure}[h]
\centering
\includegraphics[scale = 0.65]{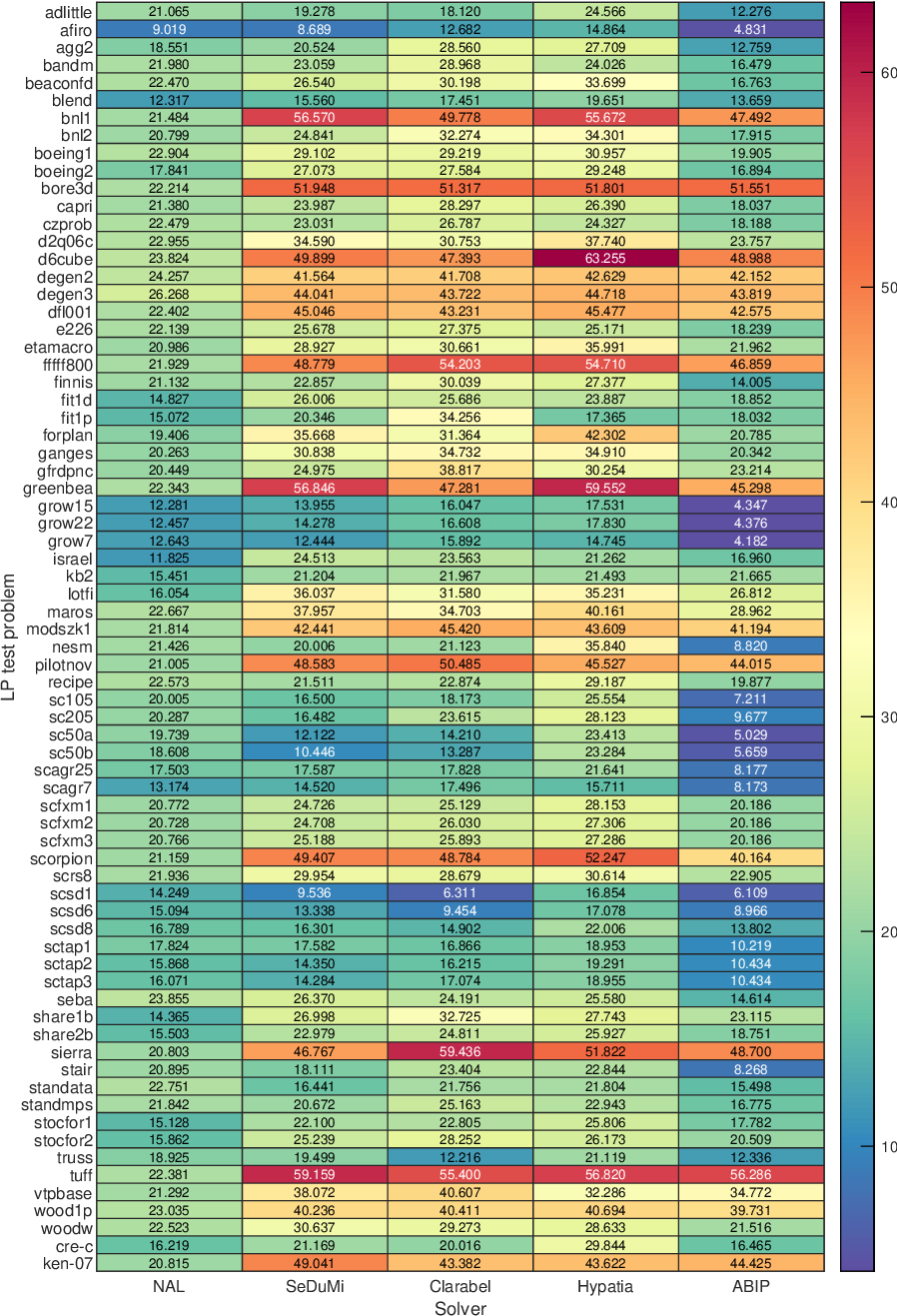}
\caption{\label{figure-condition} Logarithmic heatmap of condition numbers for NAL, SeDuMi, Clarabel, Hypatia, and ABIP.}
\end{figure}

Like classical IPMs, the \NAL solves a similar linear system \eqref{Newton direction}, which is also the most time-consuming part of the computation \cite{nocedal1999numerical}. If the condition number of a linear system is too large, the accuracy and stability of the solution may be significantly compromised, thus affecting the overall efficiency of the algorithm \cite{higham2002accuracy}. When applied to the LP problems, the diagonal elements in the \NAL are constrained within the range $(0,1)$, compared with the range $(0,\infty)$  in primal-dual IPMs \cite{nocedal1999numerical}. Therefore, the SCMs arising from the \NAL are theoretically expected to have smaller condition numbers. The following figure illustrates this point.

Fig. \ref{figure-condition} presents the condition numbers  of the SCMs from the subproblems solved by the \NALM, SeDuMi, Clarabel, Hypatia, and ABIP, corresponding to the LP problems.  Notably, the condition numbers for ABIP remain constant because the SCMs are $\mathcal{A}\mathcal{A}^*$ for all problems, while the condition numbers of the other solvers vary at each iteration. Based on this, we compute the logarithm of the geometric mean of the condition numbers across the iterations for each solver and problem, and visualize the results in a heatmap, as shown in Fig. \ref{figure-condition}.

From the comparison among the condition numbers for the \NALM, SeDuMi, Clarabel, and Hypatia, the \NAL consistently demonstrates a predominantly darker blue color, indicating that its condition numbers are relatively better. Specifically, for nearly 81\% of the test problems, the SCMs in the \NAL exhibit the smallest geometric means of the condition numbers. This observation is consistent with Remark~\ref{condition number remark}.

\subsection{Performance profile and shifted geometric mean}
Since some solvers fail to solve certain instances, we adopt the shifted geometric means (SGMs) of CPU time to ensure a fair comparison of solver performance on the benchmarks. We also employ performance profiles \cite{dolan2002benchmarking} to compare the numerical performance of different solvers. 

Specifically, the SGM is defined by
\begin{equation}\label{def-SGM}
    {\rm SGM} = \left(\prod\limits_{i=1}^n (t_i + {\rm sh})\right)^{1/n} - {\rm sh},
\end{equation}
where $t_i$ is the runtime for the $i$-th instance, $n$ is the total number of instances, and ${\rm sh}$ is the time-shift value to alleviate the effect of runtimes that are almost $0$. If the solver fails to solve the $i$-th instance, we set $t_i$ to the maximum time limit, MAXTIME. Specifically, for LP, MAXTIME is set to $3600$ seconds; for SOCP, it is set to $7200$ seconds; and for SDP, it is set to $43200$ seconds. Additionally, the parameter 
sh is set to $100$ for SDP, $10$ for SOCP, and $1$ for LP. We normalize all the SGMs by setting the smallest SGM to be 1.

Let $\mathbb{S}$ and $\mathbb{P}$ denote the set of solvers and problems, respectively. $t_{s,p}$ represents the CPU time taken by solver $s\in \mathbb{S}$ to solve problem $p\in \mathbb{P}$. Let $t_{s,p} = + \infty$ if solver $s$ fails to solve problem $p$. Then the performance ratio is defined by
$$
    r_{s,p} = \frac{t_{s,p}}{\min\limits_{s\in \mathbb{S}} \left\{ t_{s,p} \right\}}.
$$
The likelihood that solver $s\in \mathbb{S}$ achieves a performance ratio $r_{s,p}$ within a factor $\tau \in \mathbb{R}$ of the best performance is denoted by
$$
    \rho_{s} (\tau) = \frac{1}{| \mathbb{P} |} {\rm size}\left\{ p\in \mathbb{P}\ :\ r_{s,p}\le \tau \right\}.
$$
The performance profile displays the distribution function of the performance ratios for each solver. From this, we conclude that the solver with the highest value of $\rho_{s} (\tau)$ represents the best solver.

\subsubsection{Semidefinite programming}
Firstly, we test four important problem classes (GPP, MCP, QP, and Theta) from the SDPLIB benchmark. Table~\ref{SDP-solve} reports the number of problems successfully solved to an accuracy of $10^{-6}$ by different solvers along with their SGMs. Since ABIP does not support solving SDP problems currently, it is not included in this test. The results show that the \NAL solves all the problems and achieves the smallest SGM of solving time, indicating not only favorable performance on individual problems but also consistent numerical robustness across the benchmark.

\begin{table}[htbp]
\footnotesize
        \centering
        \caption{\label{SDP-solve} Percentage of SDP problems solved to the accuracy of $10^{-6}$ and SGM results.}
	\begin{tabular}{c|c c c c c}
		\toprule
		Solver name & \NALMM & SDPNAL+ & SeDuMi & Clarabel & Hypatia\\
        \hline
        Problems solved & \textbf{100\%} & 84\% & 98\% & 53\% & 81\%\\
        SGM &  \textbf{1.000} & 7.100 & 2.273 & 84.190 & 8.668\\
        \bottomrule
	\end{tabular}  
\end{table}


Fig. \ref{figure-sdp} presents the performance profile of the five solvers, comparing the CPU time required to solve the SDP problems. In this scenario, the \NAL solves nearly 60\% of the problems with the shortest CPU time, significantly outperforming all other solvers. It is worth noting that although SeDuMi successfully solves nearly all of the problems in this set and ranks as the second fastest solver, this performance is also dependent on the problem size. According to \cite{yang2015sdpnal+,zhao2010newton}, SDPNAL+ exhibits exceptionally fast solving speeds for specially structured large problems with $\mathcal{A}$ having up to $125000$ rows. This is consistent with our numerical experiments.

\begin{figure}[htbp]
\footnotesize
\centering
\includegraphics[scale = 0.5]{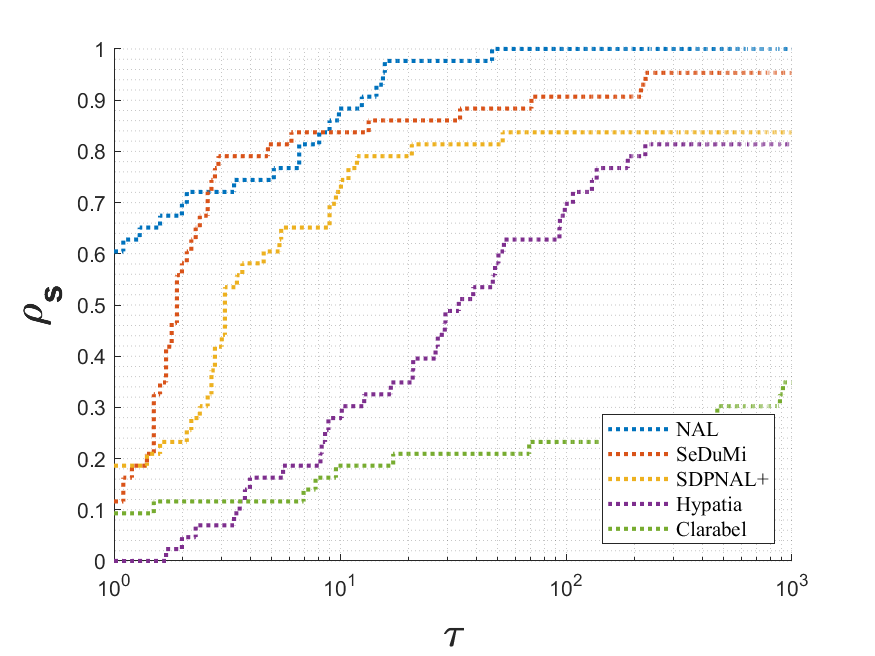}
\caption{\label{figure-sdp}{Performance profile on the SDP problems from SDPLIB with respect to CPU time.}}
\end{figure}

\subsubsection{Second-order cone programming}
Next, we test the \NAL on a set of SOCP problems that are taken from two distinct classes. The first is the MEB problem, which aims to compute the smallest radius ball that encompasses a given set of balls (including points). MEB is a fundamental problem in computational geometry with wide-ranging applications, including machine learning and data mining. For an introduction to MEB problems, we refer the reader to \cite{zhou2005efficient}. The second class is the square-root Lasso problem, which has been used extensively in high-dimensional statistics and machine learning. Both classes can be reformulated into SOCP problems, as detailed in \cite{lobo1998applications}.

We then randomly generate data at various dimensions and test both categories of these problems. Since SDPNAL+ is specifically designed for solving SDP problems, it is excluded from this experiment. Table~\ref{SOCP-solve} presents the SGMs for each solver and the \NALM, with the numbers of problems successfully solved to an accuracy of $10^{-6}$. Only the \NALM, SeDuMi, and Clarabel are able to solve all the problems. {Notably, the \NAL achieves the lowest SGM of solving time across all tested SOCP problems. Such behaviors underscore the numerical robustness and overall stability of the \NALM.}
\begin{table}[htbp]
\footnotesize
        \centering
        \caption{\label{SOCP-solve} Percentage of SOCP problems solved to the accuracy of $10^{-6}$ and SGM results.}
	\begin{tabular}{c|c c c c c}
		\toprule
		Solver name & \NALMM & ABIP & SeDuMi & Clarabel & Hypatia\\
        \hline
        Problems solved & \textbf{100\%} & 68\% & \textbf{100\%} & \textbf{100\%} & 54\%\\
        SGM &  \textbf{1.000} & 32.470 & 8.081 & 8.708 & 166.558\\
        \bottomrule
	\end{tabular}  
\end{table}



Fig. \ref{figure-socp} presents the performance profile of the four solvers compared to the \NAL for the MEB and square-root Lasso problems. It is evident that the \NAL solves at least 70\% of the problems in the shortest CPU time. Additionally, it completes all instances within a reasonable timeframe.

\begin{figure}[htbp]
\footnotesize
\centering

\includegraphics[scale = 0.5]{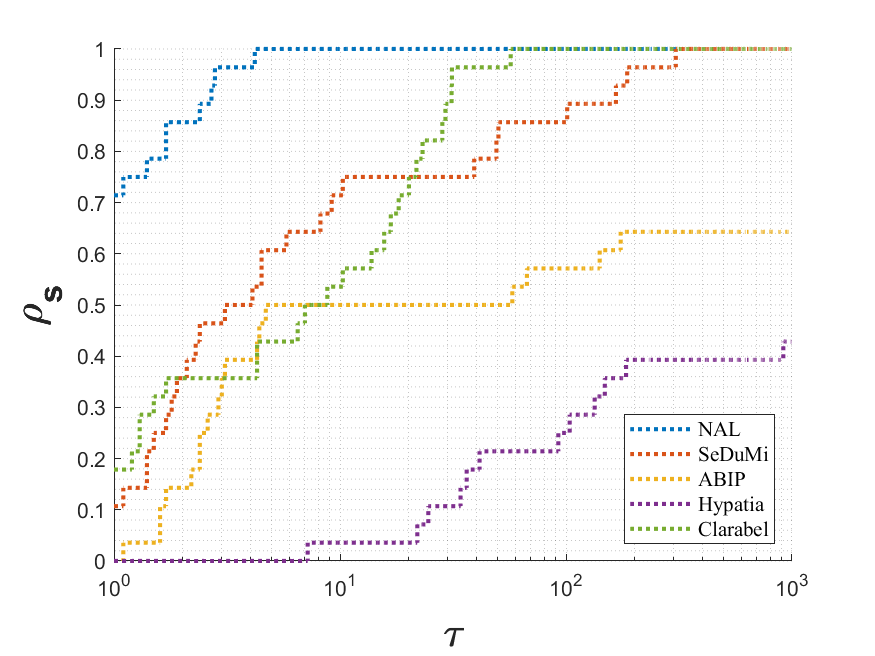}
\caption{\label{figure-socp}Performance profile on the SOCP problems with respect to CPU time.}
\end{figure}

\subsubsection{Linear programming}
Finally, we test the \NAL on the LP problems from the Netlib benchmark. Table~\ref{LP-solve} shows the total number and the SGMs of problems that have been successfully solved to the accuracy of $10^{-6}$ by the five solvers under their default settings. It can be observed that only the \NAL is able to solve all the instances to the accuracy of $10^{-6}$.  The \NAL achieves performance comparable to Clarabel on these problems, while outperforming the other solvers in most cases. 
\begin{table}[htbp]
\footnotesize
        \centering
        \caption{\label{LP-solve} Percentage of LP problems solved to the accuracy of $10^{-6}$ and SGM results.}
	\begin{tabular}{c|c c c c c}
		\toprule
		Solver name & \NALMM & ABIP & SeDuMi & Clarabel & Hypatia\\
        \hline
        Problems solved & \textbf{100\%} & 82\% & 96\% & 98\% & 72\%\\
        SGM &  \textbf{1.000} & 34.760 & 2.444 & 1.085 & 57.912\\
        \bottomrule
	\end{tabular}  
\end{table}








Fig. \ref{figure-lp} presents the performance profile of the five solvers. The CPU time required to solve 114 LP problems from the Netlib benchmark is compared. In this scenario, Clarabel requires the least amount of time on almost $68\%$ of the problems, outperforming other solvers. However, it performs less efficiently than the \NAL in the SGM statistics due to two main factors: first, Clarabel does not solve all the problems; second, it demonstrates excessively long CPU time for certain problems. This, in turn, highlights the stability of the \NALM. Furthermore, it is worth noting that Clarabel is implemented on the Julia platform, whereas the \NAL operates on the MATLAB platform, and the difference in performance could be attributed to the platform-specific factors.


\begin{figure}[htbp]
\footnotesize
\centering
\includegraphics[scale = 0.5]{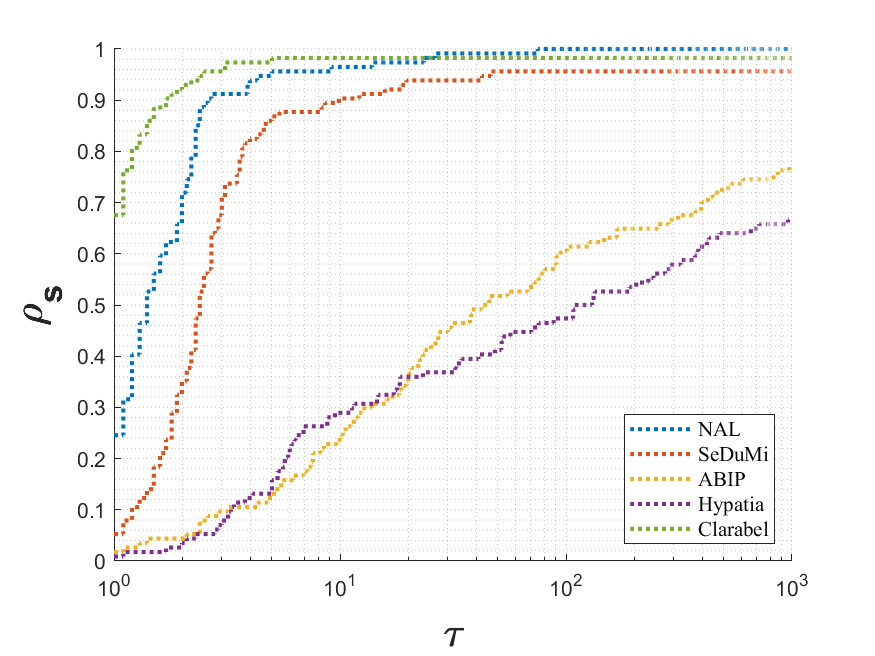}
\caption{\label{figure-lp}{Performance profile on the LP problems from Netlib with respect to CPU time.}}
\end{figure}

\section{Conclusion}\label{sec6}
 Inspired by the advantages of the ALM and IPMs, we have proposed an \NAL for solving SCP problems. The self-concordance of the novel augmented Lagrangian function has been proven, laying the foundation for the $\mathcal{O}(1/\epsilon)$ complexity bound of the \NALM. An easy-to-implement stopping criterion has also been obtained. In addition, we have shown that the condition numbers of the SCMs in the \NAL scale as $\mathcal{O}(1/\mu)$, which is more favorable than the $\mathcal{O}(1/\mu^{2})$ behavior of classical IPMs, as further evidenced by a detailed condition number heatmap. Numerical experiments on standard benchmarks have confirmed the improved performance, enhanced stability, and computational efficiency of the proposed method, demonstrating its advantages over several existing methods.  

\appendix
\section{Proof of Theorem~\ref{thm:condition number}}\label{Appendix:proof of thm:condition number}

\begin{proof}
By Theorem~\ref{convergence-x} and Theorem~\ref{boundedness of lambda s}, the sequences $\{x^{(k)}\}$, $\{s^{(k)}\}$, and $\{\lambda^{(k)}\}$ are bounded. Since $\rho^{(k)}$ is set to be bounded away from $0$, there exists a constant $\varrho>0$ such that $0<\varrho \le \rho^{(k)} \le \rho^{(0)}$ for all $k\ge 0$. It then follows from equation \eqref{Eq:function z} that $\{z^{(k)}\}$ is bounded as well. Hence, there exists a constant $U>0$ such that
\[
\max\limits_{k} \{\|s^{(k)}\|,\|z^{(k)}\|\}\le U.
\]

For notational simplicity, in the proofs below, we omit the superscripts $(k)$ indicating the iterates and denote $\nabla^2_{\lambda} \eta (x,\lambda;\mu,\rho)$ by $\nabla^2_\lambda \eta $.
 A direct computation shows that $\lambda_{\max}(\nabla_{\lambda }^2 \eta)\le \lambda_{\max}(\mathcal{A}\mathcal{A}^*)$. We next derive a lower spectral bound for $\nabla_{\lambda }^2 \eta$.

Let $u:=\rho x-c+\mathcal{A}^{*}\lambda\in\J$. Consider its spectral decomposition $u=\sum_{i=1}^{r}\lambda_i(u)\,v_i,$ where $\{v_i\}_{i=1}^{r}$ is a Jordan frame.  According to equations \eqref{Eq:function s} and \eqref{Eq:function z}, the associated variables $s$ and $z$ admit the same Jordan frame:
\[
s=\sum_{i=1}^{r}\lambda_i(s)\,v_i,
\qquad
z=\sum_{i=1}^{r}\lambda_i(z)\,v_i,
\]
with eigenvalues given by
$\lambda_i(s)=\frac{(\lambda_i(u)^2+4\rho\mu)^{\frac{1}{2}}-\lambda_i(u)}{2}$,
$\lambda_i(z)=\frac{(\lambda_i(u)^2+4\rho\mu)^{\frac{1}{2}}+\lambda_i(u)}{2}$, $i=1,\ldots,r.
$ 
In particular, one readily verifies that
$$
z\circ s=s\circ z=\rho\mu\,e,\quad \text{and}\quad \lambda_i(z)=\frac{\rho\mu}{\lambda_i(s)},\; i=1,\ldots,r.
$$
Using orthonormality of the Jordan frame, we obtain
\begin{align*}
\|s\|^2=\Big\|\sum_{i=1}^{r}\lambda_i(s)v_i\Big\|^2=\sum_{i=1}^{r}\lambda_i(s)^2\le U^2,\\
\|z\|^2=\Big\|\sum_{i=1}^{r}\lambda_i(z)v_i\Big\|^2=\sum_{i=1}^{r}\lambda_i(z)^2\le U^2.
\end{align*}
Invoking $\lambda_i(z)=\frac{\rho\mu}{\lambda_i(s)}$ yields
$
\|z\|^2=(\rho\mu)^2\sum_{i=1}^{r}\lambda_i(s)^{-2}\le U^2.
$
Define $
\lambda_{\max}(s):=\max_{1\le i\le r}\lambda_i(s)$  and $\lambda_{\min}(s):=\min_{1\le i\le r}\lambda_i(s).
$ Then, we have
\[
\lambda_{\max}(s)^2\le \sum_{i=1}^{r}\lambda_i(s)^2\le U^2,
\]
which implies
$
\lambda_{\max}(s)\le U.
$
Similarly, \[ \lambda_{\min}(s)^{-2}\le \sum_{i=1}^{r}\lambda_i(s)^{-2}\le \Big(\frac{U}{\rho\mu}\Big)^2, \] yielding $\lambda_{\min}(s)\ge \frac{\rho\mu}{U}$. Consequently, for $i=1,\ldots,r$,
\[
\frac{\rho\mu}{U}\le \lambda_i(s)\le U,
\qquad
\frac{\rho\mu}{U}\le \lambda_i(z)=\frac{\rho\mu}{\lambda_i(s)}\le U.
\]
Since $z$ and $z+s$ share the same Jordan frame, Proposition~\ref{Prop:eig value of Lyapunov} implies that $\L(z)$ and $(\L(z+s))^{-1}$ admit a common spectral decomposition. Thus, the operator $\L(z)(\L(z+s))^{-1}$ shares the same spectral decomposition, and its eigenvalues are given by
\[
    \left\{ \frac{\lambda_i(z)}{\lambda_i(z)+\lambda_i(s)} \,\big{|}\, 1 \le i \le r \right\} \; \text{and} \; \left\{ \frac{\lambda_i(z) + \lambda_j(z)}{\lambda_i(z) + \lambda_j(z)+\lambda_i(s) + \lambda_j(s)} \,\big{|}\, 1 \le i < j \le r \right\}.
\]
Consequently, $\lambda_{\min} \big(  \L(z) \L(z+s)^{-1} \big)\geq\dfrac{\rho\mu}{2U^2} \ge \dfrac{\varrho\mu}{2U^2}$, and
\begin{align*}
\lambda_{\min}(\nabla_{\lambda }^2 \eta)
&=\lambda_{\min}\!\Big(\mathcal{A}\L(z)(\L(z+s))^{-1}\mathcal{A}^{*}\Big) \\
&\ge \lambda_{\min}\!\big(\L(z) \L(z+s)^{-1}\big)\lambda_{\min}(\A\A^{*}) \\
&\ge \frac{\varrho\mu}{2U^2}\lambda_{\min}(\A\A^{*}).
\end{align*}
Together with $\lambda_{\max}(\nabla_{\lambda }^2 \eta)\le \lambda_{\max}(\A\A^{*})$, we obtain
\[
\cond(\nabla^2_{\lambda}\eta)
=\frac{\lambda_{\max}(\nabla^2_{\lambda}\eta)}{\lambda_{\min}(\nabla^2_{\lambda}\eta)}
\le
\frac{(2U^2)\,\lambda_{\max}(\A\A^{*})}{(\varrho\mu)\,\lambda_{\min}(\A\A^{*})}
=
\mathcal{O}\!\left(1/{\mu}\right).
\]
This completes the proof.
\end{proof}
\newpage
\section{Additional computational results}\label{Additional computational results}\quad
\begin{table}[H]
\scriptsize
\centering
\caption{\label{SDP-full-solve} Performance of the \NALM, SDPNAL+, SeDuMi, Clarabel and Hypatia on SDP problems (${\rm tol} = 10^{-6}$). ``a" means the solver fails to achieve the desired accuracy, despite the solution being near optimal.``m" means the procedure is out of memory. ``t" means the maximum time limit has been reached.}
\begin{tabular}{c|ccccc}
\toprule
Problem name   & NAL  & SDPNAL+ & SeDuMi & Clarabel  & Hypatia  \\
\hline
gpp100   & \textbf{0.391}  & a     & 0.716  & 356.521   & 11.340   \\
gpp124-1 & \textbf{0.392}  & 3.951   & 0.585  & 1499.430  & 6.561    \\
gpp124-2 & \textbf{0.324}  & 2.909   & 0.841  & 1449.016  & 10.835   \\
gpp124-3 & \textbf{0.373}  & 4.419   & 0.552  & 1355.056  & 7.839    \\
gpp124-4 & \textbf{0.303}  & 3.272   & 0.686  & 1483.746  & a   \\
gpp250-1 & \textbf{1.193}  & a     & 2.231  & m       & a  \\
gpp250-2 & \textbf{1.194}  & a     & 2.513  & m       & a   \\
gpp250-3 & \textbf{1.292}  & a     & 2.072  & m       & a   \\
gpp250-4 & \textbf{1.135}   & a     & 2.106  & m        & a   \\
gpp500-1 & \textbf{8.087} & 24.908  & 16.149 & m        & a \\
gpp500-2 & \textbf{7.972}  & 71.563  & 16.513 & m        & a  \\
gpp500-3 & \textbf{6.373}  & a     & 16.545 & m        & a  \\
gpp500-4 & \textbf{6.574}  & a     & 15.544 & m        & a  \\
mcp100   & 0.339  & 0.797   & \textbf{0.216}  & 3.706     & 8.401    \\
mcp124-1 & 0.390  & 1.227   & 0.282  & \textbf{0.059}     & 0.507    \\
mcp124-2 & \textbf{0.270}  & 1.222   & 0.300  & 1.861     & 0.603    \\
mcp124-3 & 0.375   & 1.015   & \textbf{0.294}  & 51.561    & 0.487    \\
mcp124-4 & \textbf{0.286}  & 0.863   & 0.399  & 252.866   & 0.552    \\
mcp250-1 & 1.315  & 3.743   & 1.104  & \textbf{0.394}     & 3.489    \\
mcp250-2 & \textbf{1.118}   & 2.960   & 1.128  & 76.725    & 3.983    \\
mcp250-3 & \textbf{1.006}  & 2.718   & 1.535  & 2201.879  & 3.792    \\
mcp250-4 & \textbf{0.887}  & 2.603   & 1.269  & 11211.088 & 3.536    \\
mcp500-1 & \textbf{5.725}  & 13.368  & 8.140  & 8.393     & 47.250   \\
mcp500-2 & 7.879  & 12.239  & \textbf{7.666}  & 3579.506  & 42.424   \\
mcp500-3 & \textbf{4.131}  & 12.624  & 7.806  & m        & 42.079   \\
mcp500-4 & \textbf{5.426}  & 13.565  & 7.682  & m        & 44.286   \\
maxG11   & 18.714          & 50.873  & 16.447 & \textbf{9.374}  & 253.576  \\
maxG32   & \textbf{196.612}   & 1055.894       & 326.272        & 1520.360        & Inf      \\
maxG51   & \textbf{26.491}    & 71.445         & 66.727         & m             & 773.879  \\
maxG55   & \textbf{4560.610}  & 10007.140      & 8051.370       & m             & m      \\
maxG60   & \textbf{14269.730} & 39346.690      & 23057.110      & m             & m      \\
qpG11    & 176.119            & 638.191        & 162.094        & \textbf{12.172} & m      \\
qpG51    & \textbf{650.667}   & 2014.699       & 667.986        & m             & m      \\
theta1   & 0.861              & 0.589          & \textbf{0.424} & 4.050           & 8.883    \\
theta2  & 3.011              & 0.923          & \textbf{0.461} & 195.800         & 1.525    \\
theta3   & 6.940              & \textbf{0.867} & 2.381          & 1858.640        & 11.011   \\
theta4   & 9.253              & \textbf{1.819} & 10.993         & 10660.689       & 48.023   \\
theta42 & 20.603 &  1.656 & 355.313 & t & 371.662\\
theta5    & 13.971             & \textbf{1.571} & 53.306         & m             & 168.493  \\
theta6    & 23.903             & \textbf{2.417} & 169.796        & m             & 453.377  \\
theta62 & 37.803 &  2.405 & 4070.368 & m & m\\
theta8 &  74.662 & 4.850 & 1089.775 & m & m\\
theta82 & 197.094 & 4.188 & t & m & m\\
\bottomrule
\end{tabular}
\end{table}
\newpage

\begin{table}[H]\scriptsize
\centering
\caption{\label{LP-full-solve-1} Performance of the \NALM, ABIP, SeDuMi, Clarabel and Hypatia on LP problems (${\rm tol} = 10^{-6}$). ``i" means the maximum iteration has been reached. ``n" means some numerical problem occur. ``d" means no sensible direction is found. ``a" means the solver fails to achieve the desired accuracy, despite the solution being near optimal. ``b" means the solver outputs primal or dual infeasibility. ``s" means only slow progress can be made. ``m" means the procedure is out of memory.}
\begin{tabular}{c|ccccc}
\toprule
Problem name     & NAL   & ABIP    & SeDuMi & Clarabel & Hypatia  \\
\hline
25fv47       & 0.266           & 9.892          & 3.689          & \textbf{0.235}  & 12.180         \\
80bau3b      & 0.500           & i              & n              & \textbf{0.422}  & 2857.200       \\
adlittle     & \textbf{0.009}  & 0.046          & 0.424          & 0.012           & 0.045          \\
afiro        & 0.007           & \textbf{0.004} & 0.043          & 0.008           & 0.020          \\
agg          & 0.073           & i              & 0.199          & \textbf{0.056}  & a              \\
agg2         & 0.114           & 1.246          & 0.103          & \textbf{0.049}  & 0.184          \\
agg3         & 0.112           & 1.314          & 0.147          & \textbf{0.048}  & b              \\
bandm        & 0.041           & 1.688          & 0.050          & \textbf{0.019}  & 0.098          \\
beaconfd     & 0.029           & 7.869          & 0.290          & \textbf{0.015}  & 0.042          \\
blend        & 0.011           & 0.010          & 0.024          & \textbf{0.008}  & 0.013          \\
bnl1         & \textbf{0.083}  & 68.533         & 0.191          & 0.116           & 2.676          \\
bnl2         & \textbf{0.371}  & 165.527        & 0.425          & 0.434           & 49.168         \\
boeing1      & 0.068           & 4.316          & 0.107          & \textbf{0.048}  & 0.414          \\
boeing2      & 0.027           & 8.517          & 0.081          & \textbf{0.017}  & 0.071          \\
bore3d       & 0.029           & i              & 0.049          & \textbf{0.019}  & 0.061          \\
brandy       & 0.035           & 0.233          & 0.044          & \textbf{0.023}  & 0.052          \\
capri        & 0.059           & 10.565         & 0.057          & \textbf{0.027}  & 0.150          \\
cycle        & 0.446           & 380.363        & 0.548          & \textbf{0.228}  & a              \\
czprob       & \textbf{0.107}  & 2.476          & 0.155          & 0.118           & 51.052         \\
d2q06c       & 0.671           & 83.108         & 1.086          & \textbf{0.563}  & 216.814        \\
d6cube       & 0.331           & 19.486         & d              & \textbf{0.258}  & 169.354        \\
degen2       & 0.113           & 0.212          & 0.051          & \textbf{0.030}  & 0.189          \\
degen3       & 0.792           & 2.492          & 0.296          & \textbf{0.207}  & 3.821          \\
dfl001       & 5.263           & 44.622         & 9.838          & \textbf{4.231}  & 978.806        \\
e226         & 0.031           & 0.700          & 0.039          & \textbf{0.018}  & 0.107          \\
etamacro     & 0.098           & i              & 0.073          & \textbf{0.052}  & 1.206          \\
fffff800     & 0.134           & i              & 0.172          & \textbf{0.100}  & 0.594          \\
finnis       & 0.049           & 1.690          & 0.117          & \textbf{0.044}  & 0.785          \\
fit1d        & 0.128           & 4.966          & 0.210          & \textbf{0.056}  & 3.157          \\
fit1p        & 0.829           & 8.070          & 0.310          & \textbf{0.032}  & 3.396          \\
fit2d        & 1.372           & 44.059         & 1.770          & \textbf{0.612}  & 2148.592       \\
fit2p        & 19.755          & 12.025         & 2.162          & \textbf{0.263}  & 2129.465       \\
forplan      & \textbf{0.031}  & i              & 0.090          & 0.051           & 2083.623       \\
ganges       & \textbf{0.072}  & 1.592          & n              & 0.073           & 9.576          \\
gfrd-pnc     & \textbf{0.027}  & 1.243          & 0.043          & 0.030           & 1.311          \\
greenbea     & 6.538           & i              & 1.709          & \textbf{0.715}  & 0.755          \\
greenbeb     & \textbf{0.676}  & i              & 3.311          & 0.971           & s              \\
grow15       & 0.051           & 0.471          & 0.063          & \textbf{0.023}  & 0.421          \\
grow22       & 0.070           & 3.332          & 0.075          & \textbf{0.033}  & 0.380          \\
grow7        & 0.027           & 0.257          & 0.031          & \textbf{0.014}  & 0.966          \\
israel       & 0.058           & 0.359          & 0.048          & \textbf{0.014}  & 0.093          \\
kb2          & \textbf{0.006}  & 0.176          & 0.029          & 0.008           & 0.061          \\
lotfi        & 0.018           & 1.182          & 0.044          & \textbf{0.015}  & 0.030          \\
maros        & 0.206           & i              & 0.227          & 0.105           & \textbf{0.077} \\
maros-r7     & 2.754           & 82.655         & \textbf{1.503} & 3.207           & b              \\
modszk1      & 0.058           & 8.393          & \textbf{0.050} & 0.051           & 2.316          \\
nesm         & 0.277           & 246.463        & 0.738          & \textbf{0.254}  & 2.212          \\
perold       & 0.173           & i              & 0.498          & \textbf{0.171}  & a              \\
pilot        & 1.672           & i              & 3.289          & \textbf{1.322}  & a              \\
pilot4       & 0.234           & i              & n              & \textbf{0.115}  & a              \\
pilot87      & \textbf{3.530}  & i              & 4.149          & 5.036           & 60.815         \\
pilot.ja     & 0.357           & i              & d              & \textbf{0.256}  & a              \\
pilot.we     & 0.208           & i              & 0.335          & \textbf{0.158}  & a              \\
pilotnov     & 0.356           & 204.911        & 0.551          & \textbf{0.150}  & 57.316         \\

\bottomrule

\end{tabular}
\end{table}
\begin{table}[H]\scriptsize
\centering
\caption{\label{LP-full-solve-2} Performance of the \NALM, ABIP, SeDuMi, Clarabel and Hypatia on LP problems (${\rm tol} = 10^{-6}$). ``i" means the maximum iteration has been reached. ``n" means some numerical problem occur. ``d" means no sensible direction is found. ``a" means the solver fails to achieve the desired accuracy, despite the solution being near optimal. ``b" means the solver outputs primal or dual infeasibility. ``s" means only slow progress can be made. ``m" means the procedure is out of memory.}
\begin{tabular}{c|ccccc}
		\toprule
Problem name     & NAL   & ABIP    & SeDuMi & Clarabel & Hypatia  \\
\hline
qap12        & 8.750           & 16.773         & \textbf{7.208} & 21.799          & a              \\
qap15        & \textbf{38.556} & 88.810         & 86.459         & 190.510         & 213.678        \\
qap8         & 0.557           & 0.324          & \textbf{0.139} & 0.271           & a              \\
recipe       & \textbf{0.007}  & 0.015          & 0.033          & 0.008           & 0.130          \\
sc105        & \textbf{0.005}  & 0.006          & 0.017          & 0.007           & 0.017          \\
sc205        & \textbf{0.008}  & 0.044          & 0.021          & 0.010           & 0.042          \\
sc50a        & \textbf{0.004}  & \textbf{0.004} & 0.013          & 0.006           & 0.012          \\
sc50b        & \textbf{0.003}  & 0.004          & 0.010          & 0.005           & 0.009          \\
scagr25      & \textbf{0.014}  & 0.096          & 0.031          & \textbf{0.014}  & 0.158          \\
scagr7       & \textbf{0.009}  & 0.025          & 0.021          & \textbf{0.009}  & 0.026          \\
scfxm1       & 0.037           & 0.188          & 0.048          & \textbf{0.031}  & 0.211          \\
scfxm2       & 0.059           & 0.597          & 0.093          & \textbf{0.051}  & 0.873          \\
scfxm3       & 0.089           & 0.879          & 0.108          & \textbf{0.045}  & 2.317          \\
scorpion     & 0.013           & 0.171          & 0.022          & \textbf{0.012}  & 0.076          \\
scrs8        & 0.043           & 21.748         & 0.063          & \textbf{0.036}  & 1.238          \\
scsd1        & \textbf{0.008}  & 0.020          & 0.017          & 0.010           & 0.332          \\
scsd6        & \textbf{0.012}  & 0.101          & 0.020          & 0.019           & 1.308          \\
scsd8        & 0.028           & 0.078          & \textbf{0.025} & \textbf{0.025}  & 9.278          \\
sctap1       & \textbf{0.018}  & 0.139          & 0.030          & 0.022           & 0.268          \\
sctap2       & 0.043           & 0.208          & 0.052          & \textbf{0.028}  & 5.269          \\
sctap3       & 0.058           & 0.166          & 0.055          & \textbf{0.039}  & 11.327         \\
seba         & 0.193           & 1.229          & 0.336          & \textbf{0.039}  & 1.280          \\
share1b      & \textbf{0.015}  & 4.584          & 0.039          & 0.016           & 0.072          \\
share2b      & 0.010           & 0.255          & 0.019          & \textbf{0.008}  & 0.023          \\
shell        & 0.045           & 5.655          & 0.096          & \textbf{0.034}  & b              \\
ship04l      & 0.033           & 0.226          & 0.041          & \textbf{0.031}  & 6.236          \\
ship04s      & 0.027           & 0.341          & 0.037          & \textbf{0.021}  & 1.610          \\
ship08l      & \textbf{0.067}  & 0.544          & 0.073          & \textbf{0.067}  & 47.923         \\
ship08s      & 0.039           & 0.295          & 0.041          & \textbf{0.030}  & 7.425          \\
ship12l      & \textbf{0.080}  & 5.382          & 0.103          & 0.241           & 147.448        \\
ship12s      & \textbf{0.041}  & 0.842          & 0.051          & 0.048           & 13.051         \\
sierra       & 0.128           & 128.871        & 0.173          & \textbf{0.069}  & 18.847         \\
stair        & 0.076           & i              & 0.055          & \textbf{0.034}  & 0.167          \\
standata     & 0.028           & 0.170          & 0.030          & \textbf{0.014}  & 1.474          \\
standgub     & 0.029           & 0.077          & 0.030          & \textbf{0.014}  & 1.896          \\
standmps     & 0.035           & 0.280          & 0.059          & \textbf{0.025}  & 1.318          \\
stocfor1     & \textbf{0.009}  & 0.214          & 0.021          & \textbf{0.009}  & 0.018          \\
stocfor2     & 0.117           & 22.956         & 0.190          & \textbf{0.054}  & 7.042          \\
stocfor3     & 1.622           & i              & 1.743          & \textbf{0.670}  & m              \\
truss        & 0.297           & 14.589         & 0.166          & \textbf{0.154}  & 436.221        \\
tuff         & 0.059           & 0.585          & 0.049          & \textbf{0.026}  & 0.384          \\
vtp.base     & \textbf{0.022}  & i              & 0.076          & \textbf{0.022}  & 0.137          \\
wood1p       & \textbf{0.151}  & 2.565          & \textbf{0.151} & 0.154           & 13.285         \\
woodw        & 0.457           & 26.188         & 0.312          & \textbf{0.207}  & 715.555        \\
cre-a        & 0.379           & 105.363        & \textbf{0.282} & b               & 120.912        \\
cre-b        & \textbf{3.611}  & a              & 4.187          & b               & m              \\
cre-c        & 0.230           & 8.419          & 0.290          & \textbf{0.109}  & 104.901        \\
cre-d        & 2.309           & 44.103         & 1.734          & \textbf{1.176}  & m              \\
ken-07       & 0.099           & 20.281         & 0.095          & \textbf{0.051}  & 21.314         \\
ken-11       & 0.503           & 581.546        & 2.038          & \textbf{0.477}  & m              \\
ken-13       & 1.889           & 1942.202       & 2.550          & \textbf{0.137}  & m              \\
ken-18       & 20.317          & a              & 36.695         & \textbf{0.866}  & m              \\
osa-07       & 1.374           & 16.748         & 1.687          & \textbf{0.836}  & m              \\
osa-14       & 3.565           & 618.997        & 5.439          & \textbf{2.177}  & m              \\
osa-30       & 6.258           & 3668.879       & 16.572         & \textbf{4.234}  & m              \\
osa-60       & 29.761          & i              & 69.033         & \textbf{12.937} & m              \\
pds-02       & 0.219           & 1.590          & 0.170          & \textbf{0.087}  & s              \\
pds-06       & 2.947           & 13.926         & 3.613          & \textbf{1.373}  & m              \\
pds-10       & 7.911           & 35.151         & 12.390         & \textbf{7.449}  & m              \\
pds-20       & \textbf{22.461} & 112.854        & 82.746         & 54.618          & m \\   
\bottomrule
\end{tabular}
\end{table}
\begin{table}[H]\scriptsize
\centering
\caption{\label{SOCP-full-solve} Performance of the \NALM, ABIP, SeDuMi, Clarabel and Hypatia on SOCP problems (${\rm tol} = 10^{-6}$). ``m" means the procedure is out of memory. ``t" means the maximum time limit has been reached.}
\begin{tabular}{c|ccccc}
\toprule
Problem name   & NAL  & ABIP & SeDuMi & Clarabel  & Hypatia  \\
\hline
meb\_100\_10       & \textbf{0.238}  & 0.553    & 1.942    & 0.287    & 9.873    \\
meb\_100\_50       & 0.112  & 3.220    & 0.089    & \textbf{0.048}    & 53.904   \\
meb\_200\_20       & 0.154  & 5.205    & 0.060    & \textbf{0.037}    & 33.811   \\
meb\_200\_40       & 0.160  & 10.648   & \textbf{0.061}    & 0.089    & 211.177  \\
meb\_100\_100      & 0.119  & 5.181    & 0.118    & \textbf{0.090}    & 350.715  \\
meb\_1000\_400     & \textbf{2.008}  & 2118.366 & 4.040    & 13.036   & m \\
meb\_1000\_800     & \textbf{3.293}  & t & 9.903    & 45.316   & m \\
meb\_1000\_1200    & \textbf{4.921}  & t & 19.711   & 82.332   & m \\
meb\_1000\_1600    & \textbf{6.375}  & t & 28.609   & 128.107  & m \\
meb\_1000\_2000    & \textbf{6.387}  & t & 320.866  & 186.744  & m \\
meb\_3000\_300     & \textbf{3.323}  & t & 5.832    & 72.307   & m \\
meb\_6000\_300     & \textbf{8.760}  & t & 11.774   & 273.846  & m \\
meb\_12000\_300    & \textbf{18.518} & t & 26.854   & 1053.071 & m \\
meb\_2000\_1000    & \textbf{7.764}  & t & 17.159   & 179.728  & m \\
meb\_2000\_2000    & \textbf{15.990} & t & 92.210   & 497.009  & m \\
lasso\_800\_10     & 0.342  & 0.716    & 0.315    & \textbf{0.312}    & 10.618   \\
lasso\_1000\_400   & 0.577  & 0.377    & 3.612    & \textbf{0.354}    & 2.524    \\
lasso\_1000\_800   & 0.355  & 0.201    & \textbf{0.130}    & 0.150    & 4.719    \\
lasso\_2000\_1200  & \textbf{1.276}  & 2.130    & 50.099   & 2.063    & 28.017   \\
lasso\_3000\_1600  & \textbf{3.138}  & 4.996    & 154.311  & 13.347   & 77.003   \\
lasso\_4000\_2000  & \textbf{2.091}  & 6.174    & 9.344    & 37.829   & 193.377  \\
lasso\_4000\_3000  & 3.010  & 4.725    & \textbf{1.838}    & 2.333    & 338.969  \\
lasso\_5000\_3000  & \textbf{4.504}  & 19.614   & 10.750   & 19.245   & 467.144  \\
lasso\_6000\_3000  & \textbf{4.210}  & 17.883   & 38.485   & 119.126  & 625.230  \\
lasso\_8000\_3000  & \textbf{8.446}  & 25.827   & 854.469  & 86.801   & 1130.628 \\
lasso\_9000\_3000  & \textbf{10.937} & 30.700   & 2040.184 & 95.892   & m \\
lasso\_10000\_4000 & \textbf{12.275} & 57.519   & 3757.698 & 191.409  & m \\
lasso\_10000\_3000 & \textbf{13.859} & 29.482   & 2303.860 & 95.627   & m \\
\bottomrule
\end{tabular}
\end{table}

\section*{Acknowledgments}
Rui-Jin Zhang and Ruoyu Diao contributed equally and are joint first authors.

\bibliographystyle{myamsplain}
\bibliography{references}


\end{document}